\newcommand{\primozcomment}[1]{\textcolor{magenta}{{\textbf{Primoz:} #1}}}
\newcommand{\Sym}{\mathrm{Sym}}
\newcommand{\Fun}{\mathrm{Fun}}
\newcommand{\Aut}{\mathrm{Aut}}
\newcommand{\Alt}{\mathrm{Alt}}
\newcommand{\Out}{\mathrm{Out}}
\renewcommand{\wr}{\mathop{\mathrm{wr}}}
\newcommand{\cE}{\mathcal{E}}
\newcommand{\f}{\mathbf{f}}
\newcommand{\F}{\mathbf{F}}
\newcommand{\FF}{\mathbb{F}}
\newcommand{\m}{\mathbf{m}}
\newcommand{\K}{\mathbf{K}}
\newcommand{\C}{\mathcal{C}}
\newcommand{\D}{\mathcal{D}}
\newcommand{\NN}{\mathbb{N}}
\newcommand{\mix}{\mathop{\mathrm{mix}}}
\newcommand{\qrk}{\mathbf{q}}
\newcommand{\A}{\mathsf{A}}
\newcommand{\act}{\,\rotatebox[origin=c]{-180}{$\circlearrowright$}\,}
\newcommand{\PGammaL}{\rm P\Gamma L}
\newcommand{\AGammaL}{\rm A\Gamma L}
\newcommand{\PGL}{\rm PGL}
\newcommand{\PSL}{\rm PSL}
\newcommand{\Dih}{\rm Dih}
\newcommand{\ASL}{\rm ASL}
\newcommand{\AGL}{\rm AGL}
\newcommand{\one}{\mathbf{1}}
\title{Separating subsets from their images}
\author[M.~Barbieri]{Marco Barbieri}
\address{Faculty of Mathematics and Physics, University of Ljubljana, Jadranska ulica 21, 1000 Ljubljana, Slovenia.} 
\email{marco.barbieri@fmf.uni-lj.si}
\author[M.~Lekše ]{Maruša Lekše}
\address{Institute of Mathematics, Physics, and Mechanics, Jadranska ulica 19, 1000 Ljubljana, Slovenia. Also affiliated with: Faculty of Mathematics and Physics, University of Ljubljana, Jadranska ulica 21, 1000 Ljubljana, Slovenia.} 
\email{marusa.lekse@imfm.si}
\author[P.~Potočnik]{Primož Potočnik}
\address{Faculty of Mathematics and Physics, University of Ljubljana, Jadranska ulica 21, 1000 Ljubljana, Slovenia. Also affiliated with: Institute of Mathematics, Physics, and Mechanics, Jadranska ulica 19, 1000 Ljubljana, Slovenia.}
\email{primoz.potocnik@fmf.uni-lj.si} 
\author[K.~Rekv\'enyi]{Kamilla Rekv\'enyi} 
\address{Department of Mathematics, University of Manchester,  M13 9PL Manchester, UK. Also affiliated with: Heilbronn Institute for Mathematical Research, BS8 1UG Bristol, UK.}
\email{kamilla.rekvenyi@manchester.ac.uk}
\keywords{self-separable; regular; primitive}
\subjclass[2020]{Primary: 20B05; Secondary: 05B30, 20B15, 20B25}
\newtheorem{theorem}{Theorem}[]
\newtheorem{lemma}[theorem]{Lemma}
\newtheorem{thmx}{Theorem}[]
\newtheorem{corx}[thmx]{Corollary}
\theoremstyle{definition}
\newtheorem{de}[theorem]{Definition}
\newtheorem{example}[theorem]{Example}
\newtheorem{remark}[theorem]{Remark}
\newtheorem{question}[thmx]{Problem}
\newtheorem{vprasanje}[theorem]{Question}
\newtheorem{construction}[theorem]{Construction}
\thanks{The research presented in this paper was initiated during a research visit by MB and KR to IMFM, funded by Slovenian Research and Innovation Agency, research programme number P1-0294. They gratefully acknowledge the support and hospitality provided. MB is supported by the Slovenian Research Agency programme P1-0222 and grant J1-50001, and he is a member of the Italian GNSAGA INdAM research group. ML and PP are supported by the Slovenian Research and Innovation Agency, programme number P1-0294.}
\begin{document}
\maketitle
\begin{abstract}
Let $G$ be a transitive permutation group acting on $\Omega$. In this paper, we introduce and study the parameter $\m(G)$, which denotes the size of the smallest set of points $A$ such that, for every  permutation $g\in G$, $A \cap A^g$ is nonempty. In particular, we focus on deriving general bounds for arbitrary transitive groups, and on the asymptotic behaviour of certain families of primitive groups. We also provide a classification of transitive groups with $\m(G)$ largest possible, namely with $\m(G)=\lceil (|\Omega|+1) / 2 \rceil$. 
\end{abstract}

\tableofcontents
	
\section{Introduction}\label{sec:intro}
Let us begin by presenting a graph-theoretical problem which spurred our interest in the topic of the paper and led us to define and investigate the notions presented here.

Suppose we are given a simple finite graph $\Gamma$ on $n$ vertices and we want to know whether its complement contains an isomorphic copy of $\Gamma$ as a subgraph, or equivalently, whether two copies of $\Gamma$ can be packed into the complete graph $\K_n$. Clearly, if $\Gamma$ contains more than $2^{-1} {n\choose 2}$ edges, then such a packing is impossible. On the other hand, if $\Gamma$ is very sparse (for example, if it has maximum valence $1$ and at least $3$ vertices), then the packing will very likely exist. It is thus natural to ask what is the largest integer $m$ such that every graph on $n$ vertices and at most $m$ edges can be packed twice into $\K_n$. Since such a packing is clearly impossible for the complete bipartite graph $\K_{1,n-1}$, we see that $m\le n-2$. Surprisingly, as was elegantly proved in \cite[Theorem 1]{BurnsSchuster1977}, this bound is sharp, in the sense that every graph on $n$ vertices and at most $n-2$ edges lies in its complement as a subgraph.

%Let $\Gamma$ be a graph with vertex set $[h]=\{1,2,\dots,h\}$. A classical question in graph theory is whether the complement of $\Gamma$ isomorphic to $\Gamma$ or not. To tackle this question we can focus in determining the minimum number of edges for which there exists a graph $\Gamma$ that does not enjoy this property. For instance, if the number of edges of $\Gamma$ is $h-1$, then the star $\K_{1,h-1}$ is not contained in $\K_h - \K_{1,h-1}$, as all the vertices of the complement have valency $h-2$. Actually, \cite[Theorem 1]{BurnsSchuster1977} states that, for every graph $\Gamma$ containing $h-2$ edges or less, $\K_h - \Gamma$ contains an isomorphic copy of $\Gamma$. This result is the bedrock of \emph{packing theory} (see \cite{GorlichZak2010,GorlikPilsniakWozniakZiolo2007} for recent developments).

The above problem for graphs can actually be translated into the language of permutation groups. Suppose $\Gamma$ is a graph with vertex set $[n]:=\{1,2,\ldots,n\}$ and edge set $E \subseteq {[n] \choose 2}$. An isomorphism from $\Gamma$ to a graph  contained in the complement of $\Gamma$, is a permutation of $[n]$ which, when viewed as a permutation on the unordered pairs ${[n] \choose 2}$, maps $E$ to a set disjoint from $E$. In short, two copies of $\Gamma$ can be packed into $\K_n$ if and only if the group $\Sym(n)$ in its action on ${[n] \choose 2}$ contains an element $g$ such that $E\cap E^g = \emptyset$. We say that such a set $E$ is self-separable, and if there is no such element $g$ then we say it is non-self-separable (see Definition~\ref{def:main}). The graph-theoretical theorem mentioned above is equivalent to the assertion that the size of the smallest non-self-separable set for the action of $\Sym(n) $ on ${[n] \choose 2}$ is $n-1$.

%This restatement now ask for treatment in a more general setting of arbitrary group actions. 
In this paper we show that viewing the original packing problem in this general group-theoretical context reveals connections to several classical problems and theorems in combinatorics, finite geometry and group theory, as well as
opens a plethora of new and interesting questions. We will address some of these and suggest further directions to explore.% provide some further questions for possible new directions.

%The previous request can be rephrased by asking whether there exists a permutation $g\in \Sym(n)$ such that $A \cap A^g$ is empty. The purpose of this paper is to study a permutation-group-theoretical parameter that generalizes this classical packing problem, and to highlight how this notion is closely related to several areas of combinatorics, finite geometry and group theory.

%\subsection{Definition}
\begin{de}
\label{def:main}
Let $G$ be a transitive permutation group on a finite set $\Omega$ with $|\Omega| \ge 2$. 
% of size $n \ge 2$. 
A subset $A\subseteq \Omega$ is said to be \emph{self-separable} for $G$ provided that there exists $g\in G$ such that $A\cap A^g = \emptyset$, and is {\em non-self-separable} otherwise. Further, let 
\[ \m(G) :=  \min \left\{ |A| \in \mathbb{N} \mid A\subseteq \Omega \hbox{ is not self-separable for }G \right\}\]
be the size of a smallest non-self-separable set for $G$.
\end{de}
%It is clear that every singleton is self-separable.
%Clearly, every subset of a self-separable set is self-separable, or equivalently, every overset of a non-self-separable set is itself non-self-separable.
%This raises a natural question of determining the size
%\[ \m(G \act \Omega) :=  \min \left\{ |A| \in \mathbb{N} \mid A\subseteq \Omega \hbox{ is not self-separable for }G \right\}\]
Observe that the parameter $\m(G)$ is also equal to the largest integer such that all sets of size less than that integer are self-separable, that is,
\[ \m(G) = \max \left\{ t \in \mathbb{N} \mid \forall A \subseteq \Omega : |A|< t  \Rightarrow A \hbox{ is self-separable} \right\}.\]

The main theme of this paper is to address the following:
\begin{question}\label{prob:0}
Given a transitive permutation group $G$, determine the parameter $\m(G)$.
\end{question}
The above question can be approached from several angles, ranging from algorithmic considerations to exact theoretical results. We mainly focus on theoretical ways, but briefly consider the computational approach too.

The computational task of determining $\m(G)$ for a given permutation group $G$ appears to be highly challenging. The only algorithm we devised during the preparation of this paper involves an exhaustive search (see~\cite{ourAlgorithm} and~\cref{sec:upperBound}) using a theoretical lower bound we establish in \cref{thm:neumann}. Our algorithm systematically tests all subsets $A$ of the domain whose sizes exceed the theoretical lower bound in \cref{thm:neumann}, and stops when we find the minimal example $A$ that is non-self-separable for $G$. When $G$ is known to be $k$-homogeneous, we may limit the search space to subset containing a fixed $k$-subset. No further improvements of this naïve approach are known to us. Therefore we ask the following:

\begin{vprasanje}
    Does there exist a significantly more efficient algorithm for computing the parameter $\m(G)$ given a permutation group $G$ (perhaps, subexponential in the degree)?
\end{vprasanje}
%\kalacomment{It is out of character to write comments not in pink for me but I don't want to confuse them with Primož's pink comments.}
%\primozcomment{What are you talking about. My text is in magenta and not pink!}
%\kalacomment{ I think this is an interesting question here for sure, but it creates more focus on computation than what we are giving it later on, are we happy with that?}
%\primozcomment{I thought it fits well, as well as points out that we were not really able to come up with a fast algorithm. But feel free to erase it, as long as you do it so that the text still flows nicely.}
In this paper, however, we will be especially interested in asymptotic (lower and upper) bounds on $\m(G)$ in terms of the degree of $G$. ~\cref{thm:neumann} tells us that, for every group $G$ of order $n$, the parameter $\m(G)$ always satisfies $\sqrt{n} \le \m(G) \le \lceil (n+1)/2 \rceil$.
Moreover, infinite families of transitive groups exist for which $\m(G)$ is asymptotically square root of the degree (for example,
the family of symmetric groups acting on unordered pairs, which is our motivating example) as well as infinite families where $\m(G)$ is asymptotically linear in the degree of the group (for example, the symmetric groups in their natural action). 
%What we find interesting, though, is that there seems to be a dichotomy between these square root and linear asymptotical behaviours.

To understand asymptotic behaviours better, we need the following definitions. For an infinite family $\C$ of finite permutation groups and for an integer $n$, let $\C_n$ be the subfamily of permutation groups in $\C$ that have degree $n$, and let
\[ X _ {\C}:= \left\{ n\in \mathbb{N} \mid \C_n \ne \emptyset \right\} .\]
The asymptotic behaviour of the parameter $\m(G)$ for $G \in \C$ will be analysed in terms of the functions:
\[ \f_\C: X _ {\C} \to \mathbb{N}, \quad n \mapsto 
\min \left\{\m (G) \mid G \in \C_n \right\} ,\]
and
\[ \F_\C: X _ {\C} \to \mathbb{N}, \quad n \mapsto 
\max \left\{\m (G) \mid G \in \C_n \right\} .\]

\begin{remark}
\label{rem:sub}
We can immediately observe that, for every two families $\D\subseteq \C$ and for every positive integer $n\in X_\C$,
\[\f_\C(n) \le \f_\D(n) \le \F_\D(n) \le \F_\C(n) .\]
\end{remark}

To understand the behaviour of these functions on large class of transitive permutation groups, we will consider the following problem.
\begin{vprasanje}\label{prob:1}
    Given an infinite class $\C$ of transitive permutation groups, what is the asymptotic behaviour of $\f_\C(n)$ and of $\F_\C(n)$? In particular, do there exist two positive constants $r \in \left(\frac{1}{2} , 1\right)$ and $c$, and an infinite family $\C$
    such that 
    \[ \liminf_n \frac{\f_\C(n)}{n^{r}} = \limsup_n \frac{\F_\C(n)}{n^{r}} = c ?\]
    That is, does there exist a family $\C$ such that the expected asymptotic behaviour of $\m(G)$, for a generic $G\in \C$, is neither square root nor linear?
\end{vprasanje}

To give a taste of the statements that we will be proving, let us list some of them.
In the case of the class $\C_{{\rm tr}}$ of all transitive permutation groups, we prove that $\liminf \f_{\C_{{\rm tr}}}(n)$ grows asymptotically as $\sqrt{n}$ (see Theorem~\ref{thm:upperBound}), while
$\limsup \F_{\C_{{\rm tr}}}(n)$ is asymptotically $\frac{n}{2}$ (see Theorem~\ref{thm:neumann}), or to be precise:
\[
\liminf_n \frac{\f_{\C_{{\rm tr}}}(n)}{\sqrt{n}} = 1 \quad \hbox{ and } \quad
\limsup_n \frac{\F_{\C_{{\rm tr}}}(n)}{n} = \frac{1}{2}.
\]
Further, as is stated in Theorem~\ref{thm:regular}, for the class $\C_{{\rm reg}}$ of regular permutation groups, the asymptotic bound for
$\F_{\C_{{\rm reg}}}$ can be significantly improved to
\[
\limsup_n \frac{\F_{\C_{{\rm reg}}}(n)}{\sqrt{n}} \le \frac{4}{\sqrt{3}}.
\]
Particular attention is also given to the class $\C_{{\rm pr}}$ of primitive permutation groups, specifically to the following problem.

\begin{question}\label{prob:primitive}
    Determine what is the largest subclass $\C \subseteq \C_{{\rm pr}}$ where a bound of the form
    \[
    \limsup_n \frac{\F_{\C}(n)}{\sqrt{n}} \le C
    \]
    holds
    for some constant $C$.
\end{question}

To facilitate smooth reading, the next section provides an extended abstract of the results proved in this paper. The proofs can be found in later sections.

\subsection{Acknowledgements} We thank Urban Jezernik for drawing our attention to the graph-theoretic problem that motivated this work. We are also grateful to Pablo Spiga for several helpful conversations, particularly regarding Neumann’s Separation Lemma, and for identifying an error in an earlier version of the proof of \cref{lemma:complementB}.

\section{Results}
\subsection{General bounds}
We start our investigation by determining some general bounds on $\m(G)$.

\begin{thmx}\label{thm:neumann}
    Let $G$ be a transitive permutation group on $\Omega$ with $|\Omega|=n$, and let $\alpha \in \Omega$ be a point. Then
    \begin{equation}\label{eq:lowerBound}
    \m(G) \ge \frac{1}{2|G_\alpha|} + \sqrt{n - \frac{1}{|G_\alpha|} + \frac{1}{4|G_\alpha|^2}},
    \end{equation}
    with equality if and only if $G$ is a regular group of collineations
    of a projective plane. In particular, if $\mathcal{C_{\mathrm{tr}}}$ is the class of all transitive permutation groups, then
    \[ \liminf_n \frac{\f_\mathcal{C_{\mathrm{tr}}}(n)}{\sqrt{n}} = 1 .\]
\end{thmx}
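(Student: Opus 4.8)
The plan is to obtain \eqref{eq:lowerBound} from one averaging identity and then to recognise the equality configuration as a classical object. Put $k:=|G_\alpha|$, so $|G|=kn$, and let $A\subseteq\Omega$ be non-self-separable with $a:=|A|$. The identity to establish is $\sum_{g\in G}|A\cap A^g|=a^2k$, which follows from transitivity, since for any $x,y\in\Omega$ exactly $k$ elements of $G$ carry $x$ to $y$. As $A$ is non-self-separable, every summand is at least $1$, while the summand for $g=1$ equals $a$; hence $a^2k\ge a+(kn-1)$, that is, $ka^2-a-(kn-1)\ge 0$. This quadratic in $a$ has one negative root and one positive root, and $a>0$, so $a$ is at least the positive root, which is exactly the right-hand side of \eqref{eq:lowerBound}; this proves the bound. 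For the ``in particular'' clause, one half is free: the positive root is at least $\sqrt{n-1}$ because $k\ge1$, so $\f_{\C_{\mathrm{tr}}}(n)\ge\sqrt{n-1}$ and $\liminf_n\f_{\C_{\mathrm{tr}}}(n)/\sqrt n\ge1$. For the other half, take $G$ to be a Singer cycle of order $q^2+q+1$ acting regularly on the points of $\mathrm{PG}(2,q)$, $q$ a prime power: a projective line is then a non-self-separable set of size $q+1$ (any two lines meet), so $\f_{\C_{\mathrm{tr}}}(q^2+q+1)\le q+1$ and $\liminf_n\f_{\C_{\mathrm{tr}}}(n)/\sqrt n\le\lim_q(q+1)/\sqrt{q^2+q+1}=1$.

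It remains to settle the equality case. Equality in \eqref{eq:lowerBound} means its right-hand side is an integer attained by some non-self-separable $A$; for such $A$ all the inequalities above are tight, forcing $|A\cap A^g|=1$ for every $g\ne1$ (in particular the setwise stabiliser of $A$ is trivial, so the $kn$ translates $A^g$ are pairwise distinct). Reading this as a combinatorial design: $\mathcal B:=\{A^g:g\in G\}$ is a set of $kn$ distinct blocks of size $a$ on the $n$ points of $\Omega$, any two of which meet in exactly one point, with $G$ acting regularly on $\mathcal B$ and transitively on $\Omega$, and a count shows each point lies on $ka\ge 2$ blocks. Pass to the dual linear space whose points are the $kn$ blocks and whose lines are indexed by the $n$ points of $\Omega$: two of its points lie on exactly one line (two blocks meet in one point) and each line has at least two points, so the de Bruijn--Erd\H{o}s theorem gives $kn\le n$, i.e.\ $k=1$, unless all blocks pass through one common point. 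In that degenerate case the common point is unique (distinct blocks meet in only one point) and hence fixed by $G$, contradicting transitivity as $n\ge2$; so $G$ is regular, $n=a^2-a+1$, and writing $a=q+1$ and identifying $\Omega$ with $G$, the condition $|A\cap A^g|=1$ for $g\ne1$ says precisely that $A$ is a planar difference set in $G$, whose development is a projective plane of order $q$ on which $G$ acts regularly by collineations. The converse is immediate: if $G$ is regular on the points of a projective plane of order $q$ then $n=q^2+q+1$, $k=1$, a line is a non-self-separable set of size $q+1$, and $q+1$ is exactly the value of the right-hand side of \eqref{eq:lowerBound} in this case, so the bound is attained.

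The main obstacle, I expect, is the equality analysis: converting the arithmetic tightness into the statement that $\mathcal B$ is essentially a projective plane, and ruling out $k\ge2$. The de Bruijn--Erd\H{o}s step --- equivalently, a Fisher-type inequality applied to the dual incidence structure --- is the crux, and one must check carefully that its hypotheses hold here and dispose of the degenerate near-pencil case by exploiting the group action; the passage to planar difference sets and then to projective planes is classical and may be quoted.
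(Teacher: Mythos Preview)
Your argument is correct and follows essentially the same line as the paper's: the averaging identity, the resulting quadratic in $|A|$, and the equality analysis via a Fisher-type inequality applied to the dual incidence structure. The paper phrases the last step as Fisher's inequality for the $2$-design with point set $A^G$ and block set $\Omega$ (which is uniform by transitivity, so the near-pencil case never arises), whereas you invoke de~Bruijn--Erd\H{o}s and dispose of the near-pencil by hand; these are the same inequality in slightly different clothing. Your treatment is also more explicit about the ``in particular'' clause, supplying the Singer-cycle witnesses for $\liminf\le 1$ that the paper leaves implicit.
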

The proof of \cref{thm:neumann} (given in \cref{sec:lowerBound}) relies on the Neumann Separation Lemma (see~\cite{ BirchBurnsMacdonaldNeumann1976, Neumann1975}), and the characterisation of the equality depends on considerations from incidence geometry.

%We are not aware of any example in which $G$ is not regular and $\m(G)$ is the ceiling.

%Inequality~\eqref{eq:lowerBound}.
%the smallest integer satisfying the inequality of \cref{thm:neumann}.
%It would be interesting to either build such an example or to improve the bound under the assumption that the stabiliser of a point is nontrivial.

We now focus on an upper bound for $\m(G)$. Clearly, for every transitive permutation group $G$ of degree $n$, two subsets containing strictly more than $\lfloor  n / 2 \rfloor$ points cannot have trivial intersection. Hence,
\begin{equation}\label{eq:upperBound}
    \m(G) \le \left\lceil \frac{n + 1}{2} \right\rceil.
\end{equation} 
%The following example shows that the bound is sharp.

%\begin{example}\label{rem:Sym}
%Let $G$ be either the symmetric group $\Sym(n)$ or the alternating group $\mathrm{Alt}(n)$ endowed with their natural actions on $[n]$. Since $G$ is $\lfloor n / 2 \rfloor$-transitive, every subset containing half of the points or less can be mapped to its complement by a permutation in $G$. Hence, every subset $A$ containing $\lfloor  n / 2 \rfloor$ points is self-separable for $G$. Therefore,
%\[ \m(\Sym(n)) = \m(\mathrm{Alt}(n)) =  \left \lfloor \frac{n}{2} \right \rfloor + 1 = \left\lceil \frac{n + 1}{2} \right\rceil .\]
%\end{example}

Classifying the groups that attain the upper bound in Inequality~\eqref{eq:upperBound} is a nontrivial problem and is interesting in its own right.
%posing challenges both computationally and theoretically. 
For example, a very natural subproblem that needs to be addressed is:
%for even degree, the problem assumes the following very natural form: 
``{\em Determine all primitive permutation groups $G$ of even degree such that every subset of the domain of size half of the degree can be mapped to its complement by an element of $G$}.'' 
Clearly, the alternating and symmetric groups have this property,
but it is somewhat surprising that so do some other permutation groups.
%In \cref{sec:upperBound} we completely classify all such groups.
%This characterisation is summarised in the theorem below (see Lemmas~\ref{lemma:complementB}, \ref{lemma:caseA} and \ref{lemma:caseB} for details).

\begin{thmx}
\label{theorem:comput}
Let $G$ be a transitive permutation group of degree $n$. If $G$ attains the upper bound in Inequality~\eqref{eq:upperBound}, then $G$ satisfies one of the following conditions.
\begin{enumerate}[$(a)$]
    \item $G$ is primitive, and either $n \le 24$ or $\Alt(\Omega)\le G \le \Sym(\Omega)$. (The precise list of such groups can be found in Lemma~\ref{lemma:complementB}.)
    \item $G$ is imprimitive with a system of
    imprimitivity consisting of $n/2$ blocks of size $2$, and either $n\le 24$ or the action on the block system is isomorphic to either $\Alt(n/2)$ or $\Sym(n/2)$.  (The precise list of such groups can be found in Lemma~\ref{lemma:caseA}.)
    \item $G$ is imprimitive with a system of
    imprimitivity consisting of two blocks of size $n/2$, and either $n\le 24$ or the action induced on a block is isomorphic to either $\Alt(n/2)$ or $\Sym(n/2)$.
    (The precise list of such groups can be found in Lemma~\ref{lemma:caseB}.)
 \end{enumerate}
\end{thmx}

We stress that not all groups with the conditions above reach the putative upper bound. A complete characterisation is provided in \cref{sec:upperBound} (once again, see \cref{lemma:complementB,lemma:caseA,lemma:caseB}).

Since \cref{theorem:comput} yields infinite families of permutation groups attaining the bound in Inequality~\eqref{eq:upperBound}, we have the following consequence:
%The infinitude of permutation groups listed in the above theorems
%meeting the upper bound \eqref{eq:upperBound} 
%yields the following consequence:
\begin{thmx}\label{thm:upperBound}
    Let $\mathcal{C}_{\mathrm{tr}}$ be the class of all transitive permutation groups. Then
    \[ \lim_n \frac{\F_\mathcal{C_{\mathrm{tr}}}(n)}{n} = \frac{1}{2} .\]
\end{thmx}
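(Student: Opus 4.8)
The plan is to combine the two ingredients already established in the excerpt; no new idea is needed. First I would invoke the elementary upper bound \eqref{eq:upperBound}, which holds for \emph{every} transitive permutation group $G$ of degree $n$: two subsets of $[n]$ of size exceeding $\lfloor n/2 \rfloor$ always intersect, whence $\m(G) \le \lceil (n+1)/2 \rceil$. Taking the maximum over all $G \in \mathcal{C}_n$ immediately gives $\F_{\mathcal{C}_{\mathrm{tr}}}(n) \le \lceil (n+1)/2 \rceil$ for every $n \ge 2$.

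For the matching lower bound I would exhibit a single transitive group of degree $n$ attaining it. By \cref{rem:Sym}, the symmetric group $\Sym(n)$ in its natural action---which is transitive of degree exactly $n$ for all $n \ge 2$, hence belongs to $\mathcal{C}_n$---satisfies $\m(\Sym(n)) = \lceil (n+1)/2 \rceil$. (The alternating group $\Alt(n)$ would serve equally well.) Therefore $\F_{\mathcal{C}_{\mathrm{tr}}}(n) \ge \m(\Sym(n)) = \lceil (n+1)/2 \rceil$.

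Combining the two inequalities yields the exact identity $\F_{\mathcal{C}_{\mathrm{tr}}}(n) = \lceil (n+1)/2 \rceil$ for all $n \ge 2$. It then remains only to divide by $n$ and pass to the limit: since $\lceil (n+1)/2 \rceil / n \to 1/2$ as $n \to \infty$, we conclude $\lim_n \F_{\mathcal{C}_{\mathrm{tr}}}(n)/n = 1/2$, as claimed.

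I do not expect any genuine obstacle here: the statement is a direct corollary of \eqref{eq:upperBound} and \cref{rem:Sym}, where the real content already resides. The only point deserving a moment's care is the trivial observation that $\Sym(n)$ is indeed a member of the class $\mathcal{C}_n$, i.e. that its natural action is transitive of degree precisely $n$, which is clear.
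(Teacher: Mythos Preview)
Your proposal is correct and follows exactly the paper's approach: the theorem is presented there as an immediate consequence of the general upper bound \eqref{eq:upperBound} together with \cref{rem:Sym}, which exhibits $\Sym(n)$ (or $\Alt(n)$) as a transitive group of degree $n$ attaining that bound. Your write-up even makes the argument slightly more explicit than the paper, which simply states ``Therefore, we obtain the following asymptotic behavior'' after those two ingredients.
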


%We provide this classification in \cref{sec:upperBound}. 
%Broadly speaking, unless the degree of $G$ is less than $24$, $G$ is either $\Alt(n)$ or $\Sym(n)$ in their natural actions on $n$ points, or it is isomorphic to a subgroup of $C_2 \wr \Sym(m)$ or $\Sym(m) \wr C_2$ endowed with the imprimitive action on $2m$ points (where the permutation group that $G$ induces on the system of blocks or on a single block, respectively, is either alternating or symmetric). For a precise statement, including the complete list of small-degree groups meeting the bound, we refer to Lemmas~\ref{lemma:complementB} and~\ref{lemma:complementC}.

\subsection{Permutation groups with bounded stabilisers}
In the previous section, we determined the asymptotic behaviour of $\f_{\C_{\mathrm{tr}}}$ and  $\F_{\C_{\mathrm{tr}}}$ for the class of all transitive permutation groups. Note, however, that the gap between the square root asymptotics of $\liminf\f_{\C_{\mathrm{tr}}}(n)$ and the linear growth of $\F_{\C_{\mathrm{tr}}}(n)$ leaves many questions about the parameter $\m(G)$ for a generic transitive permutation group $G$ open. In what follows, we will try to identify subclasses $\C \subseteq \C_{\mathrm{tr}}$ for which we are able to exert a more effective control. That is, families for which the gap between $f_{\C_{\mathrm{tr}}}$ and  $F_{\C_{\mathrm{tr}}}$ is smaller.
%As a result, our understanding of the typical behaviour of the functions $\f_\C$ and $\F_\C$ is primarily informed by certain restricted families $\C$ for which we are able to exert effective control. 
%In the remainder of this introduction, we outline our results concerning such families. 
In fact, we will observe that for many important families of permutation group $\C \subseteq \C_{\mathrm{tr}}$, $\F_\C(n)$ is asymptotically proportional to $\sqrt{n}$ (as well as $\f_\C(n)$, by consequence). The class of regular permutation groups provides such a family.

\begin{thmx}\label{thm:regular}
    Let $G$ be a regular permutation group of degree $n$. Then
    \[  \m(G) \le \frac{4}{\sqrt3} \sqrt{n} .\]
    In particular, if $\C_{\mathrm{reg}}$ is the family of all regular permutation groups, then
    \[1 = \liminf_n \frac{\f_{\mathcal{C}_{\mathrm{reg}}}(n)}{\sqrt{n}}  \le  \limsup_n \frac{\F_{\mathcal{C}_{{\mathrm{reg}}}}(n)}{\sqrt{n}} \le \frac{4}{\sqrt3} .\]
\end{thmx}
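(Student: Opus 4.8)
The plan is to translate the problem into group theory and then treat two regimes. Since $G$ is regular of degree $n$, identify its domain with $G$ acting on itself by right multiplication; for $A\subseteq G$ and $g\in G$ one has $A\cap Ag=\emptyset$ if and only if $g\notin A^{-1}A$, so $A$ is non-self-separable exactly when $A^{-1}A=G$. Thus $\m(G)$ is the least size of a set $A$ with $A^{-1}A=G$ (a \emph{difference basis} of $G$), and it suffices to produce one of size at most $\tfrac{4}{\sqrt3}\sqrt n$. The main device is the \emph{subgroup construction}: if $H\le G$ and $R$ is a right transversal of $H$, then $A:=H\cup R$ satisfies $A^{-1}A\supseteq H^{-1}R=HR=G$, so
\[
\m(G)\;\le\;|H|+[G:H]\;=\;\ell+\frac{n}{\ell},\qquad \ell:=|H|.
\]
Now $\ell\mapsto\ell+n/\ell$ is convex and takes the value $\tfrac{4}{\sqrt3}\sqrt n$ at both $\ell=\sqrt{n/3}$ and $\ell=\sqrt{3n}$, hence is $\le\tfrac{4}{\sqrt3}\sqrt n$ exactly on $[\sqrt{n/3},\sqrt{3n}]$. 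So the theorem follows at once when $G$ has a subgroup of order in this (multiplicatively wide) interval — and this is the source of the constant $\tfrac{4}{\sqrt3}$.

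It remains to deal with a group $G$ having no subgroup of order in $[\sqrt{n/3},\sqrt{3n}]$. Along a chief series $1=N_0\lhd\cdots\lhd N_r=G$ there is an index $i$ with $|N_{i-1}|<\sqrt{n/3}<\sqrt{3n}<|N_i|$, so the chief factor $V=N_i/N_{i-1}$ has $|V|>3$; the absence of medium subgroups forces $V$ (and the relevant sections) to be abelian and creates a large ``multiplicative gap'' that the construction will exploit. For such $G$ one builds a difference basis of the expected size $O(\sqrt n)$ as follows. First, $\m(G)\le\m(N)\,\m(G/N)$ for every normal $N\lhd G$: taking difference bases $S$ of $N$ and $\bar D$ of $G/N$, lifting $\bar D$ to $D$, and setting $A:=SD$, a short computation using that $N$ is normal gives $A^{-1}A=D^{-1}ND=G$. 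Next, one needs economical difference bases for the abelian sections that occur. For a cyclic group $\ZZ_m$ an ``interval $\cup$ arithmetic progression'' set works, and measuring the progression so that the resulting symmetric difference set need only span half of $\ZZ_m$ yields a basis of size $(1+o(1))\sqrt{2m}$ (rather than the naive $(1+o(1))\cdot2\sqrt m$); for an elementary abelian group $C_q^k$ one uses a pair of complementary subspaces when $q$ is small, or a ``moment‑curve'' set $\{(x,x^2,\dots)\}\cup(\text{lower-dimensional recursion})$ when $q$ is large, in each case of size $(1+o(1))c\sqrt{q^k}$ with $c\le\sqrt2$ (and $\sqrt2<\tfrac{4}{\sqrt3}$). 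Feeding these into the product bound, and arranging the bookkeeping so that the passage from the chosen section up to $G$ costs only lower‑order terms, gives $\m(G)\le(2+o(1))\sqrt n$.

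The genuinely delicate point is this last step: one must verify that \emph{every} group with no subgroup of near‑$\sqrt n$ order has a section to which a construction of the above type applies \emph{with constant strictly below} $\tfrac{4}{\sqrt3}$, which requires a case analysis over the possible shapes of the jumping chief factor $V$ (cyclic of prime order; elementary abelian of higher rank; the case where $N$ is itself the large subgroup and $G/N$ its small cyclic quotient) and some care in combining the bases. The finitely many small degrees for which the $o(1)$ error terms are not yet absorbed (i.e.\ those $n$ with $\lceil(n+1)/2\rceil>\tfrac{4}{\sqrt3}\sqrt n$) are handled by the trivial bound $\m(G)\le\lceil(n+1)/2\rceil$ from \eqref{eq:upperBound}, or by direct inspection.

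Finally, the displayed asymptotics follow. From $\m(G)\le\tfrac{4}{\sqrt3}\sqrt n$ we get $\limsup_n \F_{\C_{\mathrm{reg}}}(n)/\sqrt n\le\tfrac{4}{\sqrt3}$. For the $\liminf$: since $\C_{\mathrm{reg}}\subseteq\C_{\mathrm{tr}}$, \cref{rem:sub} and \cref{thm:neumann} give $\liminf_n \f_{\C_{\mathrm{reg}}}(n)/\sqrt n\ge\liminf_n \f_{\C_{\mathrm{tr}}}(n)/\sqrt n=1$; and the equality case of \cref{thm:neumann} is attained by \emph{regular} collineation groups of projective planes — e.g.\ the cyclic Singer group of the desarguesian plane of order $q$ is regular of degree $n=q^2+q+1$ with trivial point stabiliser, so $\m(G)=\tfrac12+\sqrt{\,n-\tfrac34\,}=q+1=\sqrt n\,(1+o(1))$ — which shows $\f_{\C_{\mathrm{reg}}}(n)\le\sqrt n\,(1+o(1))$ along an infinite set of degrees, hence $\liminf_n \f_{\C_{\mathrm{reg}}}(n)/\sqrt n\le1$. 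Therefore $\liminf_n \f_{\C_{\mathrm{reg}}}(n)/\sqrt n=1$, completing the statement.
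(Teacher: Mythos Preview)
Your translation to difference bases and the subgroup-plus-transversal construction $A=H\cup R$ are correct, and they do give $\m(G)\le|H|+[G:H]\le\tfrac{4}{\sqrt3}\sqrt n$ whenever $G$ has a subgroup of order in $[\sqrt{n/3},\sqrt{3n}]$; the asymptotic conclusions at the end are also fine. But the ``no medium subgroup'' case is not proved: the assertion that the jumping chief factor $V$ must be abelian is unjustified (nothing in your argument rules out a nonabelian simple chief factor straddling the interval), and the subsequent case analysis for abelian sections is only sketched, with no verification that the resulting constants combine via the product bound $\m(G)\le\m(N)\,\m(G/N)$ to stay below $4/\sqrt3$. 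You explicitly flag this step as ``genuinely delicate'' without carrying it out, so what you have written is a plan rather than a proof.

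The paper bypasses all of this by citing \cite[Theorem~1]{KozmaLev1992}: Kozma and Lev prove that every finite group $G$ admits subsets $B,C$ with $BC=G$ and $|B|+|C|\le(4/\sqrt3)\sqrt{|G|}$, and one then takes $A=B\cup C^{-1}$ as a difference basis of the required size. As the paper notes, the Kozma--Lev argument itself rests on the CFSG-dependent fact (due to Lev) that every group other than $C_p$ contains a subgroup of order exceeding $\sqrt{|G|}$, so the constant $4/\sqrt3$ is not obtained by elementary means. Your subgroup-plus-transversal idea is essentially the core of their method; what you are missing is precisely the structural input guaranteeing a subgroup of suitable size, or a complete alternative when none exists.
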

We will prove this result in \cref{sec:reg}.
This result follows from \cref{rem:regular}, which asserts that for a subset $A$ not being self-separable for a regular group $G$ is equivalent to $A$ being a difference basis for $G$:

\begin{de}\label{de:differenceBasis}
    A subset $A$ of a group $G$ such that $AA^{-1} = G$ is a \emph{difference basis for $G$}
\end{de}

Let us remark that asymptotic results for the minimal size of a difference basis for a generic group $G$ is still an open problem in additive combinatorics.

If every nontrivial element of $G$ can be written uniquely as $ab^{-1}$ for some $a,b \in A$, then the difference basis $A$ is called a \emph{difference set}, which is a widely used tool in finite geometries. For instance, if $q$ is the order of a classical projective plane, a paper 
\cite{Singer1938} by Singer
proves the existence of a difference set of size $q+1$ for the cyclic group of order $q^2+q+1$. Observe that these permutation groups are precisely those that meet the lower bound in \cref{thm:neumann}.

Let $G$ be an abstract group and suppose that $G$ induces multiple faithful actions on some finite sets $\Omega_i$. To distinguish them, we will use the symbol $G \act \Omega_i$ to denote the permutation groups induced by the action of $G$ on the domain $\Omega_i$. For instance, if $G=\mathrm{PSL}_d(q)$ and $V^k$ denotes the set of $k$-dimensional subspaces of the natural module, then the permutation group induced by the natural action of $G$ on $V^k$ can be denoted by $\mathrm{PSL}_d(q) \act V^k$. Similarly, if $H$ is a core-free subgroup of $G$, we denote the permutation group that $G$ induces on the right coset space $G/H$ by $G \act G/H$.

Let $H\le K$ be two core-free subgroups of $G$. In \cref{lemma:nestedstabilsers}, we establish that $\m(G\act G/K) \le \m(G \act G/H)$. By combining this fact with \cref{thm:regular}, we obtained the following corollaries of theorems of Cameron, Praeger, Saxl and Seitz \cite{CameronPraegerSaxlSeitz1983} and of Gardiner, Trofimov and Weiss \cite{Gardiner1976, Trofimov1990, Trofimov1991, Weiss1979, Weiss1993}.
\begin{corx}\label{cor:subdegree}
    For every positive integer $d$, there exists a constant $\mathbf{c}(d)$ such that, for every primitive permutation group $G$ of degree $n$ and minimal nontrivial subdegree at most $d$, the following inequality holds
    \[ \m(G) \le \mathbf{c}(d)\sqrt{n} .\]
    In particular, if $\C$ is the family of primitive permutation groups of minimal nontrivial subdegree at most $d$, then
    \[ \limsup_n \frac{\F_\C(n)}{\sqrt n} \le \mathbf{c}(d) .\]
\end{corx}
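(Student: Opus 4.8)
The plan is to reduce, for such a group $G$, the computation of $\m(G)$ to the regular case and then to invoke the uniform square-root bound of \cref{thm:regular}. The single piece of heavy machinery needed is the resolution of Sims' conjecture by Cameron, Praeger, Saxl and Seitz (with the further graph-theoretic contributions of Gardiner, Trofimov and Weiss): there is a function $f\colon\NN\to\NN$, which without loss of generality we may take to be non-decreasing, such that whenever $G$ is a primitive permutation group possessing a nontrivial subdegree equal to $d$, its point stabiliser satisfies $|G_\alpha|\le f(d)$; here one may set $f(1)=1$, as a primitive group with a suborbit of length one is regular of prime degree. Hence, if $G$ is primitive of degree $n$ with minimal nontrivial subdegree at most $d$, then $|G_\alpha|\le f(d)$, and therefore $|G|=n\,|G_\alpha|\le f(d)\,n$.

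Now I would pass to the right regular representation. The trivial subgroup $1$ of $G$ is core-free and is contained in $G_\alpha$, so \cref{lemma:nestedStabilizers}, applied with $H=1$ and $K=G_\alpha$, gives $\m(G)=\m(G\act G/G_\alpha)\le\m(G\act G/1)$. The permutation group $G\act G/1$ is exactly the right regular representation of $G$, a regular permutation group of degree $|G|$, so \cref{thm:regular} applies to it and yields
\[ \m(G)=\m(G\act G/G_\alpha)\le\m(G\act G/1)\le\frac{4}{\sqrt3}\sqrt{|G|}\le\frac{4}{\sqrt3}\sqrt{f(d)\,n}=\mathbf{c}(d)\sqrt{n}, \]
where $\mathbf{c}(d):=\frac{4}{\sqrt3}\sqrt{f(d)}$. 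This proves the displayed inequality for every group in the class, and the statement about $\F_\C$ follows immediately: for each $n\in X_\C$ and each $G\in\C_n$ we have $\m(G)\le\mathbf{c}(d)\sqrt n$, hence $\F_\C(n)\le\mathbf{c}(d)\sqrt n$ and $\limsup_n\F_\C(n)/\sqrt n\le\mathbf{c}(d)$.

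I expect no genuine obstacle in the combinatorial part of this argument: once \cref{lemma:nestedStabilizers} and \cref{thm:regular} are in hand, the deduction is a two-line bookkeeping exercise. All the difficulty is concentrated in the input from Sims' conjecture, namely that the bound on $|G_\alpha|$ is uniform over the entire class of primitive groups of a given subdegree; this is the hard, classification-dependent theorem being quoted, and it is the only place where real work happens. The minor points to verify are that $f$ may be taken monotone (replace it by $d\mapsto\max\{f(1),f(2),\dots,f(d)\}$) and that the trivial subgroup legitimately counts as a core-free subgroup so that \cref{lemma:nestedStabilizers} is applicable with $H=1$.
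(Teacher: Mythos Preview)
Your proof is correct and follows essentially the same route as the paper: bound the point-stabiliser order via Sims' conjecture, compare the given action to the regular one through \cref{lemma:nestedStabilizers} with $H=1$, and finish with \cref{thm:regular}. The paper packages the last two steps into the framework of \cref{remark:familyBounded} (which is proved via \cref{lemma:familyBounded}, itself a consequence of \cref{lemma:nestedStabilizers}), arriving at the identical explicit constant $\mathbf{c}(d)=\tfrac{4}{\sqrt3}\sqrt{f(d)}$; your version simply unwinds that packaging. One minor point: the contributions of Gardiner, Trofimov and Weiss pertain to the $2$-arc-transitive setting of \cref{cor:graphRestrictive}, not to Sims' conjecture proper, so only Cameron--Praeger--Saxl--Seitz need be cited here.
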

\begin{corx}\label{cor:graphRestrictive}
    For every positive integer $d$, there exists a constant $\mathbf{c}(d)$ such that, for every $2$-arc-transitive group of automorphisms $G$ of a connected graph with $n$ vertices and of valency at most $d$, the following inequality holds
    \[ \m(G) \le \mathbf{c}(d)\sqrt{n} .\]
    In particular, if $\C$ is the family of $2$-arc-transitive group of automorphisms of connected graphs of valency at most $d$, then
    \[ \limsup_n \frac{\F_\C(n)}{\sqrt n} \le \mathbf{c}(d) .\]
\end{corx}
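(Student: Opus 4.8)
\textbf{Proof proposal for \cref{cor:graphRestrictive}.}
The plan is to follow the same template as for \cref{cor:subdegree}: first invoke the deep structure theory to bound the order of a vertex stabiliser by a function of the valency alone, and then convert this into a bound on $\m$ using the monotonicity of $\m$ under enlarging point stabilisers (\cref{lemma:nestedStabilizers}) together with the regular-group estimate of \cref{thm:regular}. Concretely, let $\Gamma$ be a connected graph on $n$ vertices of valency at most $d$, and let $G\le\Aut(\Gamma)$ act $2$-arc-transitively. A graph automorphism is determined by its action on vertices, so $G\act V\Gamma$ is faithful; hence a vertex stabiliser $G_v$ is core-free in $G$ and $G\act V\Gamma\cong G\act G/G_v$.

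The one substantial step is to produce a monotone function $f\colon\NN\to\NN$, depending only on $d$, with
\[
|G_v|\le f(d).
\]
This is exactly what the Weiss-type analysis of vertex stabilisers of connected $2$-arc-transitive graphs provides: a bound on the relevant $p$-local stabilisers (in the spirit of Thompson--Wielandt), the restricted list of possibilities for the $2$-transitive group $G_v^{\Gamma(v)}$ of degree at most $d$, and the amalgam/covering arguments of Gardiner, Trofimov and Weiss \cite{Gardiner1976, Trofimov1990, Trofimov1991, Weiss1979, Weiss1993} together yield such an $f$ (when the valency of $\Gamma$ is some $d'\le d$, apply the bound with $d'$ and use monotonicity).

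Granting this, apply \cref{lemma:nestedStabilizers} to the chain of core-free subgroups $\{1\}\le G_v$ of $G$ to obtain
\[
\m(G)=\m(G\act G/G_v)\le\m(G\act G/\{1\}).
\]
Now $G\act G/\{1\}$ is the right-regular representation of $G$, a regular permutation group of degree $|G|=n\,|G_v|\le n\,f(d)$, so \cref{thm:regular} gives
\[
\m(G\act G/\{1\})\le\frac{4}{\sqrt3}\sqrt{|G|}\le\frac{4\sqrt{f(d)}}{\sqrt3}\,\sqrt n.
\]
Setting $\mathbf{c}(d):=\tfrac{4}{\sqrt3}\sqrt{f(d)}$ proves $\m(G)\le\mathbf{c}(d)\sqrt n$; the statement about $\F_\C$ is then immediate, since this holds for every $G\in\C_n$ and every $n\in X_\C$, whence $\F_\C(n)\le\mathbf{c}(d)\sqrt n$.

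The entire difficulty lies in the stabiliser bound $|G_v|\le f(d)$; everything downstream is a mechanical combination of \cref{lemma:nestedStabilizers} and \cref{thm:regular}. Two points deserve care. First, one must cite the Gardiner--Trofimov--Weiss machinery in precisely the generality needed (connected, $2$-arc-transitive, with no restriction on the valency). Second---in contrast to \cref{cor:subdegree}, which assumes primitivity and uses the Cameron--Praeger--Saxl--Seitz resolution of the Sims conjecture \cite{CameronPraegerSaxlSeitz1983} to bound $|G_\alpha|$---here no primitivity of $G\act V\Gamma$ is available (and $2$-arc-transitive graphs are routinely imprimitive, e.g.\ bipartite ones such as even cycles), but none is needed, since \cref{thm:regular} imposes no such hypothesis; the only structural feature of the vertex action used beyond transitivity is its faithfulness, which is automatic for a group of graph automorphisms.
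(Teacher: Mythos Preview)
Your argument is correct and is essentially the paper's proof: the paper says the proof of \cref{cor:graphRestrictive} follows that of \cref{cor:subdegree} verbatim, replacing the Sims-conjecture bound by the Gardiner--Trofimov--Weiss bound on $|G_v|$, and then invokes \cref{remark:familyBounded} (which packages \cref{lemma:nestedStabilizers} with $H=\{1\}$ together with \cref{thm:regular}) to obtain $\F_\C(n)\le 4\sqrt{\mathbf{c}\,n/3}$---exactly the constant $\mathbf{c}(d)=\tfrac{4}{\sqrt3}\sqrt{f(d)}$ you compute. Your explicit check that $G_v$ is core-free (faithfulness of $G$ on $V\Gamma$) and your remark that no primitivity hypothesis is needed are useful clarifications not spelled out in the paper.
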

The proofs of these results are in \cref{sec:reg}.
\subsection{Reduction to primitive permutation groups} A standard approach in permutation group theory is to investigate whether and to what extent the problem can be reduced to the setting of primitive groups.

Recall that every imprimitive permutation group $G$ can be reduced to an iterated wreath product of primitive permutation groups as follows. Let $\Sigma$ be a system of imprimitivity for $G$ and let $B$ be a block in $\Sigma$. As usual, we denote the permutation group that the setwise stabiliser $G_B$ induces on $B$ by $G_B^B$, and  the permutation group that $G$ induces on $\Sigma$ by $G^\Sigma$. Then $G$ embeds into the imprimitive wreath product $G_B^B \wr G^\Sigma$. Moreover, if $\Sigma$ is a coarsest system of imprimitivity, then $G^\Sigma$ is primitive, meanwhile if $\Sigma$ is a finest system of imprimitivity, then $G_B^B$ is primitive.

(The following theorem is proved in \cref{sec:extension}.)

\begin{thmx}\label{thm:boundsTransitive}
    Let $G$ be a group, and suppose that $G$ embeds into the imprimitive wreath product $H \wr K$. Then
    \[ \m(K) + \m(H) - 1 \le \m(G) \le \m(K) \m(H) \,.\]
\end{thmx}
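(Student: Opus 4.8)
The plan is to prove the two inequalities separately; the upper bound will come from writing down one explicit non-self-separable set, whereas the lower bound is the substantial part. I work with $K\wr H$ in its imprimitive action on $\Delta\times\Sigma$, where $K$ acts on $\Delta$ and $H$ on $\Sigma$, with blocks $B_\sigma=\Delta\times\{\sigma\}$ for $\sigma\in\Sigma$; for definiteness I write $(\delta,\sigma)^{(f,h)}=(\delta^{f(\sigma)},\sigma^h)$ for $f\colon\Sigma\to K$ and $h\in H$ (the other usual convention merely changes where $f$ is evaluated and needs no change below). For $A\subseteq\Delta\times\Sigma$ I call $S(A)=\{\sigma:A\cap B_\sigma\neq\emptyset\}$ its \emph{support} and $A_\sigma=\{\delta:(\delta,\sigma)\in A\}$ its \emph{fibre over $\sigma$}, so that $|A|=\sum_{\sigma\in S(A)}|A_\sigma|$. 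Two facts will be used freely: a non-self-separable set for a permutation group stays non-self-separable for every subgroup; and, in the reduction recalled just before the statement, $G$ is a transitive subgroup of $K\wr H$ with $G^\Sigma=H$ (the action on blocks is onto $H$) and with each block stabiliser $G_{B_\sigma}$ inducing the whole of $K$ on $B_\sigma$.

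For the upper bound I would take a smallest non-self-separable set $C\subseteq\Delta$ for $K$ and a smallest one $D\subseteq\Sigma$ for $H$, so $|C|=\m(K)$, $|D|=\m(H)$, and show that $A:=C\times D$ is non-self-separable already for $K\wr H$. Given $(f,h)\in K\wr H$: since $D$ is non-self-separable for $H$ there is $\sigma\in D$ with $\sigma^h\in D$, and since $C$ is non-self-separable for $K$ and $f(\sigma)\in K$ there is $\delta\in C$ with $\delta^{f(\sigma)}\in C$; then $(\delta,\sigma)\in A$ and $(\delta,\sigma)^{(f,h)}=(\delta^{f(\sigma)},\sigma^h)\in A$. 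Being non-self-separable for $K\wr H$, the set $A$ is a fortiori non-self-separable for the subgroup $G$, so $\m(G)\le|A|=\m(K)\m(H)$.

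For the lower bound let $A$ be a smallest non-self-separable set for $G$ and put $S=S(A)$; the goal is $|A|\ge\m(K)+\m(H)-1$. The first step is that $S$ is non-self-separable for $H$: if $S\cap S^h=\emptyset$ for some $h\in H=G^\Sigma$, lifting $h$ to $g\in G$ gives $S(A^g)=S^h$ disjoint from $S$, whence $A\cap A^g=\emptyset$, contradicting minimality of $A$. Hence $|S|\ge\m(H)$, and since $|A|=|S|+\sum_{\sigma\in S}(|A_\sigma|-1)$, it remains to show that the \emph{fibre excess} $e:=\sum_{\sigma\in S}(|A_\sigma|-1)$ satisfies $e\ge\m(K)-1$; this would immediately give $|A|\ge\m(H)+\m(K)-1$.

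So I would argue by contradiction, assuming $e\le\m(K)-2$ and producing $g\in G$ with $A\cap A^g=\emptyset$. Under this assumption every fibre has $|A_\sigma|\le e+1\le\m(K)-1$, hence is self-separable for $K$, and there are at most $e\le\m(K)-2$ \emph{heavy} blocks $\sigma$ (those with $|A_\sigma|\ge2$). The natural two-stage strategy is: first choose the block permutation $h=g^\Sigma\in H$ so as to control the few heavy blocks --- for instance making the overlap $S\cap S^h$ as small as possible (it is nonempty, by the previous step, but one would like to shrink it to a single block, or to a subset of the heavy part) --- the light fibres being single points that are harmless; and then, among the lifts of $h$, use the freedom in the kernel of $G\to G^\Sigma$ together with the fact that $G_{B_\sigma}$ realises all of $K$ on each block to repair the remaining collisions, which concern only heavy fibres of size below $\m(K)$ and so can be dislodged one block at a time. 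The step I expect to be the real obstacle is exactly this repair stage: since $G$ need not contain the base group $K^\Sigma$ of $K\wr H$, the corrections available in different blocks are not independent of one another, nor fully independent of the already-chosen $h$, so one must actually prove --- using the transitivity of $G$ on $\Delta\times\Sigma$ and the surjectivity $G\twoheadrightarrow H$, together with the fact that the relevant fibres have size $<\m(K)$ --- that enough correcting elements survive inside $G$. I expect this to be achieved either by an averaging argument over the relevant coset of the kernel, or by a sufficiently clever choice of $h$ that makes $S\cap S^h$ so small that essentially no repair is needed.
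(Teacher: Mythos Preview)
Your upper bound argument is correct and is essentially the paper's: take $A=C\times D$ with $C\subseteq\Delta$, $D\subseteq\Sigma$ smallest non-self-separable sets for $K$ and $H$, check that $A$ is non-self-separable for $K\wr H$, and pass to the subgroup $G$.

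For the lower bound, the paper's argument is considerably shorter than yours and sidesteps the ``fibre excess'' bookkeeping entirely. Rather than starting from a minimal non-self-separable set, the paper takes an arbitrary $A$ with $|A|\le\m(K)+\m(H)-2$ and argues by a direct dichotomy. If some block $B_\sigma$ meets $A$ in at least $\m(K)$ points, then the remaining at most $\m(H)-2$ points of $A$ touch at most $\m(H)-2$ further blocks, so $|S(A)|\le\m(H)-1$; any $h\in H$ separating $S(A)$ from itself lifts to a $g$ that separates $A$. If on the other hand every fibre $A_\sigma$ has fewer than $\m(K)$ points, then an element of the base group $\Fun(\Sigma,K)$ separates all the fibres simultaneously. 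There is no need to first prove $|S(A)|\ge\m(H)$ and then bound the excess; the two cases are mutually exhaustive at the outset.

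Your worry about $G$ not containing the full base group is well placed, but note that it applies to the paper's proof just as much as to yours: in the second branch of the dichotomy the paper explicitly takes $g$ in the base group of $K\wr H$, not in $G$. So the lower bound, as argued in the paper, is really established for $G=K\wr H$ itself (and this is the only case to which the theorem is ever applied later). Your attempt to push the lower bound through for an arbitrary transitive $G\le K\wr H$ with $G^\Sigma=H$ and $G_{B_\sigma}^{B_\sigma}=K$ is strictly more than what the paper does; your acknowledged inability to complete the ``repair stage'' is therefore not a gap in your understanding of the intended proof, but an attempt to prove something the paper does not. For the purpose of matching the paper, take $G=K\wr H$ in the lower-bound half, and then the base-group step is immediate.
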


The following example shows that the upper bound is asymptotically sharp. Let $K$ and $H$ be isomorphic to the cyclic group of degree $q^2+q+1$ with regular action, and let $G=K \wr H$. Note that the degree of $G$ is $(q^2+q+1)^2$. For large $q$, using \cref{thm:neumann} to find the lower bound for $\m(G)$ and the upper bounds for $\m(K)$ and $\m(H)$,
\[ q^2 + o(q^2) \le \m(G) \le \m(K) \m(H) \le (q + o(q))^2 = q^2 + o(q^2). \]
This proves the asymptotic sharpness of the upper bound of \cref{thm:boundsTransitive}. 

Furthermore, if $K$ and $H$ are regular groups, using the above argument and \cref{thm:regular}, we can show that $\m(G) \le \frac{8}{3}\sqrt{n}$ -- matching the optimal lower bound up to a multiplicative constant.

On the other hand, we are unable to prove the sharpness of the lower bound. 
We show, however, that there exists a family of permutation groups for which the lower bound is asymptotically tight up to a constant. The following result, of independent interest, will be essential in finding such a family. 
\begin{thmx}\label{thm:largeBlocks}\label{cor:blocks}
    If $G$ is the wreath product $\Sym(a) \wr \Sym(b)$ endowed with the imprimitive action on $ab$ points, then
    \[\m(G) = \begin{cases}
    a+b-2 \quad \hbox{if $a=3$ and $b$ is odd, or $b=3$ and $a$ is odd} \\
    a+b-1 \quad \hbox{otherwise} \,.
    \end{cases}\]
    In particular, if $G$ is an imprimitive permutation group with a block system of cardinality $b$ whose blocks contain $a$ points, then
    \[ \m(G) \le a + b - 1 \,.\]
\end{thmx}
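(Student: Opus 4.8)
The plan is to reduce self-separability for $G=\Sym(a)\wr\Sym(b)$ to a purely combinatorial condition on the vector of block-intersection sizes, and then to solve the resulting integer minimisation; throughout we may assume $a,b\ge 2$, since otherwise the action is not imprimitive. Write $\Omega=B_1\sqcup\cdots\sqcup B_b$ with $|B_j|=a$, and for $A\subseteq\Omega$ put $a_j=|A\cap B_j|$. Since $G$ contains the full base group $\Sym(a)^b$ together with every block permutation, $A$ is self-separable for $G$ if and only if there is $\sigma\in\Sym(b)$ with $a_j+a_{\sigma(j)}\le a$ for all $j$: for an element $g$ inducing the block permutation $\sigma$, the set $A^g$ meets $B_j$ in a subset of cardinality $a_{\sigma^{-1}(j)}$ that the base group may place anywhere in $B_j$, so $A\cap A^g=\emptyset$ is achievable exactly when $a_{\sigma^{-1}(j)}\le a-a_j$ for every $j$. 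Sorting the sizes as $a_{(1)}\ge\cdots\ge a_{(b)}$, I claim such a $\sigma$ exists if and only if $a_{(i)}+a_{(b+1-i)}\le a$ for every $i$ with $1\le i\le\lceil b/2\rceil$ (for $b$ odd and $i=(b+1)/2$ this reads $2a_{((b+1)/2)}\le a$). The implication ``$\Leftarrow$'' uses the involution pairing $i$ with $b+1-i$; for ``$\Rightarrow$'', if $a_{(i)}+a_{(b+1-i)}\ge a+1$ then $\sigma$ must send each of the $i$ blocks of largest size to a block of size strictly less than $a_{(b+1-i)}$, and there are at most $i-1$ such blocks, contradicting injectivity of $\sigma$.

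Consequently $\m(G)$ is the least value of $\sum_j a_j$ over all vectors $(a_1,\dots,a_b)\in\{0,\dots,a\}^b$ whose sorted form violates the above condition; call such vectors \emph{bad}. For the upper bound I would exhibit bad vectors: the vector $(a,1,1,\dots,1)$ is bad (witnessed by $i=1$) and has weight $a+b-1$; and, when $b$ is odd, the vector with $\tfrac{b+1}{2}$ entries equal to $\lceil\tfrac{a+1}{2}\rceil$ and the remaining $\tfrac{b-1}{2}$ entries equal to $0$ is bad (witnessed by $i=(b+1)/2$) and has weight $\tfrac{b+1}{2}\lceil\tfrac{a+1}{2}\rceil$. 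An elementary computation (writing $c=\lceil\tfrac{a+1}{2}\rceil$, so that $a\in\{2c-2,2c-1\}$) shows that $\tfrac{b+1}{2}\lceil\tfrac{a+1}{2}\rceil$ equals $a+b-2$ precisely when $b$ is odd and either $a=3$ or $b=3$ with $a$ odd, and is at least $a+b-1$ in every other case. This gives $\m(G)\le a+b-1$, improved to $a+b-2$ in the two exceptional families.

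For the matching lower bound one must show that every bad vector has weight at least $a+b-1$, except in the exceptional families, where weight at least $a+b-2$ suffices. If a bad vector has witness index $i$, then, the sorted vector being weakly decreasing, its weight is at least $i\,a_{(i)}+(b+1-2i)\,a_{(b+1-i)}$, where $u:=a_{(i)}$ and $v:=a_{(b+1-i)}$ satisfy $u\ge v\ge 1$, $u+v\ge a+1$ and $u\le a$ (and $u=v$ when $i=(b+1)/2$). It therefore suffices to minimise the linear form $iu+(b+1-2i)v$ over this region and over $i\in\{1,\dots,\lceil b/2\rceil\}$: for $i<(b+1)/2$ both coefficients are positive, so the minimum lies on the edge $u+v=a+1$ and is attained at $(u,v)=(a,1)$ or at $(u,v)=(\lceil\tfrac{a+1}{2}\rceil,\lfloor\tfrac{a+1}{2}\rfloor)$, while for $i=(b+1)/2$ the form is $iu$ with $u\ge\lceil\tfrac{a+1}{2}\rceil$. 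Comparing the resulting candidate values $i(a-2)+(b+1)$, $\ i\lceil\tfrac{a+1}{2}\rceil+(b+1-2i)\lfloor\tfrac{a+1}{2}\rfloor$ and $\tfrac{b+1}{2}\lceil\tfrac{a+1}{2}\rceil$ with $a+b-1$ then produces exactly the stated dichotomy (the value $a+b-1$ being realised by the vector $(a,1,\dots,1)$ at $i=1$).

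The main obstacle is this last comparison: it splits according to the parity of $b$, the sign of $b+1-3i$, and the small values $a\in\{2,3\}$ and $b\in\{3,4,5\}$, and in each branch one must check that the weight is at least $a+b-1$ (with equality forced only to the extremal vectors above) and isolate the branches giving $a+b-2$; this is elementary but bookkeeping-heavy, whereas the preceding steps are short. Finally, for the ``in particular'' clause: an arbitrary imprimitive group with a block system of $b$ blocks of size $a$ embeds, as a permutation group on the same $ab$ points, into $\Sym(a)\wr\Sym(b)$ (identify each block with $[a]$, cf. the reduction preceding \cref{thm:boundsTransitive}), and $\m$ is monotone under passing to a subgroup acting on the same set, since a set that no element of the overgroup separates is a fortiori non-self-separable for the subgroup; hence $\m(G)\le\m(\Sym(a)\wr\Sym(b))\le a+b-1$.
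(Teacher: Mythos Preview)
Your reduction is sound: the equivalence ``$A$ is self-separable $\Leftrightarrow$ some $\sigma\in\Sym(b)$ satisfies $a_j+a_{\sigma(j)}\le a$ for all $j$'' holds because the full base group $\Sym(a)^b$ lets one place the image pieces arbitrarily inside each block, and your sorted-vector criterion $a_{(i)}+a_{(b+1-i)}\le a$ follows by the pigeonhole argument you give. Since for every $i$ the minimising pair $(u,v)$ on the edge $u+v=a+1$ is integral and the bad vector with $i$ copies of $u$, then $b+1-2i$ copies of $v$, then $i-1$ zeros actually realises the bound $M_i$, your method in fact proves the exact formula $\m(G)=\min_i M_i$. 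The remaining bookkeeping (splitting on the sign of $3i-(b+1)$ and on the parity of $a$, then comparing against $a+b-1$ via the inequalities $(a-2)(b-2)\ge0$ and $(a-3)(b-3)\ge0$) does go through and isolates precisely the two exceptional families, so your outline is a valid proof.

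The paper takes a rather different route. It gives the same upper-bound witness $A=B_1\cup\{\omega_2,\dots,\omega_b\}$, but for the lower bound it argues by induction on $b$, with separate base cases ($b=2$; $b=5$ with $a\ne3$; $b=3$ with $a$ even and with $a$ odd; the family $a=3$, $b$ odd). The inductive step finds a block carrying at least $\lceil(a+1)/2\rceil$ points, pairs it with a block carrying few enough points that the two can be swapped disjointly, and applies the hypothesis to the remaining $b-2$ blocks.

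Your approach is more conceptual: the sorted-vector criterion makes transparent why the exceptional cases are exactly those where the middle-index constraint (for $b$ odd) becomes binding, and it yields the closed form $\m(G)=\min_i M_i$ that could be reused for related wreath products. The paper's induction, by contrast, is entirely self-contained, avoids the integer-programming case analysis you flag as the main obstacle, and produces the separating permutations explicitly rather than asserting their existence.
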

The proof of this result is given in \cref{sec:ingredient}.
Let $a$ and $b$ be two even positive integers, and let $K=\Sym(a)$ and $H= \Sym(b)$. By choosing $G = K \wr H$, \cref{cor:blocks} implies that
\[ \m(G) = a + b - 1 = 2\m(K) + 2\m(H) - 5 .\]
On the other hand, \cref{thm:boundsTransitive} yields
\[ \m(G) \ge \m(K) + \m(H) - 1 .\]
If we allow $a$ and $b$ to grow as two arbitrary unbounded functions, we find that, asymptotically, $\m(G)$ is twice the lower bound of \cref{thm:boundsTransitive} -- proving the asymptotic sharpness up to the multiplicative constant $2$. This leads to the following questions.

\begin{vprasanje}
    Is the lower bound stated in \cref{thm:boundsTransitive} asymptotically sharp? If it is not, is $2(\m(K) + \m(H))$ an asymptotically sharp lower bound? Furthermore, by excluding an appropriate infinite family of transitive groups, is it possible to obtain a larger lower bound?
\end{vprasanje}

\subsection{Primitive permutation groups} Recall that, for $\Alt(n)$ and $\Sym(n)$ in their action on $n$ points, $\m(\Alt(n)) = \m(\Sym(n)) = \lceil (n+1)/2 \rceil$. At the moment, they constitute the single infinite family of primitive permutation groups where we can prove that a linear asymptotic behaviour is exhibited. We suspect that, possibly apart from few well-understood infinite families, the size of the smallest non-self-separable subset is asymptotically proportional to the square root of the degree of the group. This intuition is captured by the following question.

\begin{vprasanje}\label{prob:2}\label{quest:thisOne}
    Does there exist a constant $C$ such that, for the infinite class $\C_{{\rm pr}}^\ast$ of all primitive permutation groups apart from alternating and symmetric groups in their natural action,
    \[ \limsup_n \frac{\F_{\C_{{\rm pr}}^\ast}(n)}{\sqrt{n}} = C ?\]
\end{vprasanje}

Natural candidates that might exhibit a different behaviour are primitive permutation groups of product action type whose socle is a direct product of alternating groups in their natural action (see~\cref{cor:PA} and~\cref{lemma:complementB}). If the answer to \cref{prob:2} has a negative answer, then we still propose \cref{prob:primitive}, already stated in \cref{sec:intro}, as a possible direction of investigation.

In view of this and the already established square-root lower bound (see~\cref{thm:neumann}), we focus on obtaining upper bounds for $\m(G)$, and thus avoiding the computation of $\f_{{\C_{{\rm pr}}^\ast}}(n)$. Let us consider the following example.

\begin{example}
    Let $ \mathrm{PSL}_d(q) \unlhd G \le \mathrm{P}\Gamma \mathrm{L}_d(q)$ act on the projective space $\mathrm{PG}(d-1,q)$. To find an upper bound on $\m(G)$, one needs to find a subset $A$ of the projective space such that every image of $A$ under a collineation always intersects $A$. An obvious candidate is a projective subspace of dimension at least %exceeding \primozcomment{shouldn't we say `at least' rather than `exceeding'}
    half of that of the ambient $\mathrm{PG}(d-1,q)$. In this case, we can check that if $d-1$ is even, we can take a subspace of dimension $(d-1)/2$, and we find that, for large $n$,
    \[\m(G) \le \sqrt{n}+o(\sqrt{n}).\]
    Otherwise, if $d-1$ is odd, we take a subspace of dimension $d/2$, and hence we find
    \[ \m(G) \le n^{\frac{d}{2(d-1)}} + o \left( n^{\frac{d}{2(d-1)}} \right) .\]
    This construction is not meaningful for $d=2$, since in that case we cannot take any projective subspace apart from the line itself. In fact, we know very little of the behaviour of $\m(G)$ where $G \le \Aut(PG(1,q))$ and $q$ is large. Furthermore, even for $d \ge 4$, the exponent $d/(2(d-1))$ is always larger than $1/2$ (though it converges to $1/2$ as $d$ goes to infinity),  suggesting that these groups might also provide a negative answer to \cref{prob:2}. However, we believe that subsets distinct from subspaces might provide an example of smaller non-self-separable set. It would be fascinating if some subgeometry $A$ of $\mathrm{PG}(d-1,q)$ were not self-separable, but all the obvious candidates (such as ovoids or Galois subgeometries) seem to be self-separable or too large.
\end{example}

Similar reasoning can produce a large array of upper bounds for primitive groups of almost simple type in their standard action. As in \cite{LiebeckShalev2003}, the term \emph{standard actions}, refers to permutation groups that are either alternating, symmetric or classical, and, in the former two cases, the domain consists of $k$-subsets, while in the latter the groups are endowed with a \emph{subspace action}. (For the list of subspace actions, we refer to \cite[Chapter~4]{BurnessGiudici2016}.)

{\small
\begin{table}[th]
	\rowcolors{2}{white}{OliveGreen!25}
	\centering
	\begin{tabularx}{\textwidth}{l l X c}
		\toprule
		& $\mathrm{soc}(G)$ & Domain $\Omega$ & $\m(G \act \Omega)$ \\
		
		\midrule
        $(a)$ & $\mathrm{Alt}(m)$ & $k$-subsets & $\displaystyle
        %{m-\left\lfloor \frac{k}{2}\right\rfloor \choose k-\left\lfloor \frac{k}{2}\right\rfloor}
        \lesssim \frac{k!^{\frac{1}{k} \left\lceil \frac{k}{2}\right\rceil}}{\left\lceil \frac{k}{2}\right\rceil !} n^{\frac{1}{k} \left\lceil \frac{k}{2}\right\rceil}  $ \\
		
		$(b)$ & $\mathrm{PSL}_d(q)$ & $k$-subspaces & $\displaystyle
        %\left[ d-\lfloor k/2\rfloor \atop k-\lfloor k/2\rfloor\right]_q
        \lesssim n^{\frac{1}{k} \left\lceil \frac{k}{2}\right\rceil}$  \\

        $(c)$ & $\mathrm{PSL}_d(q)$ & \shortstack{pairs $(X,Y)$ of subspaces in\\direct sum, with $\dim(X)=k$} & $\displaystyle
        %q^{k(d-k)}\left[ d-\lfloor k/2\rfloor \atop k-\lfloor k/2\rfloor\right]_q
        \lesssim n^{\frac{1}{k} \left\lceil \frac{k}{2}\right\rceil}$ \\
        
        $(d)$ & $\mathrm{PSL}_d(q)$ & \shortstack{pairs $(X,Y)$ of subspaces with\\$X\le Y$, $\dim(X) =k$, $\dim(Y) =d-k$} & $\displaystyle
        \lesssim n^{\frac{k \left\lceil \frac{d - 3k}{2} \right\rceil + (d-k) \left\lceil \frac{k}{2} \right\rceil}{k(2d-3k)}}$  \\

        $(e)$ & $\mathrm{PSU}_d(q)$ & totally isotropic $k$-subspaces & $(\clubsuit)$ \\
        
        $(f)$ & $\mathrm{PSU}_d(q)$ & nondegenerate $k$-subspaces & $\displaystyle \lesssim n^{\frac{1}{k} \left\lceil \frac{k}{2}\right\rceil}$ \\

        $(g)$ & $\mathrm{PSp}_{2d}(q)$ & totally isotropic $k$-subspaces & $(\clubsuit)$ \\
        
        $(h)$ & $\mathrm{PSp}_{2d}(q)$ & nondegenerate $2k$-subspaces & $\displaystyle \lesssim n^{\frac{1}{k} \left\lceil \frac{k}{2}\right\rceil}$ \\
        
        $(i)$ & $\mathrm{P}\Omega_{2d}^+(q)$ & totally singular $k$-subspaces & $(\clubsuit)$ \\

        $(j)$ & $\mathrm{P}\Omega_{2d}^-(q)$ & totally singular $k$-subspaces & $(\clubsuit)$ \\

        $(k)$ & $\Omega_{2d+1}(q)$ & totally singular $k$-subspaces, $q$ odd & $(\clubsuit)$ \\
        
        $(l)$ & $\Omega_{2d}^\epsilon(q)$ & nonsingular $1$-subspaces, $q$ even & $\le \lfloor\frac{n}{2}\rfloor$ \\

        $(m)$ & $\mathrm{P}\Omega_{2d}^+(q)$ & nondegenerate hyperbolic $2k$-subspaces & $\displaystyle \lesssim n^{\frac{1}{k} \left\lceil \frac{k}{2}\right\rceil}$ \\

        $(n)$ & $\mathrm{P}\Omega_{2d}^+(q)$ & \shortstack{nondegenerate parabolic\\$(2k+1)$-subspaces, $q$ odd}  & $(\spadesuit)$ \\

        $(o)$ & $\mathrm{P}\Omega_{2d}^+(q)$ & nondegenerate elliptic $2k$-subspaces & $\displaystyle \lesssim n^{\frac{1}{k} \left\lceil \frac{k+1}{2} \right\rceil}$ \\

        $(p)$ & $\mathrm{P}\Omega_{2d}^-(q)$ & \shortstack{nondegenerate parabolic\\$(2k+1)$-subspaces, $q$ odd} & $(\spadesuit)$ \\

        $(q)$ & $\mathrm{P}\Omega_{2d}^-(q)$ & nondegenerate elliptic $2k$-subspaces & $\displaystyle \lesssim n^{\frac{1}{k} \left\lceil \frac{k+1}{2} \right\rceil}$ \\
        
        $(r)$ & $\Omega_{2d+1}(q)$ & nondegenerate hyperbolic $2k$-subspaces, $q$ odd & $\displaystyle \lesssim n^{\frac{1}{k} \left\lceil \frac{k}{2}\right\rceil}$ \\
        
        $(s)$ & $\Omega_{2d+1}(q)$ & nondegenerate elliptic $2k$-subspaces, $q$ odd & $\displaystyle \lesssim n^{\frac{1}{k} \left\lceil \frac{k+1}{2} \right\rceil}$ \\
        \bottomrule
	\end{tabularx}%
	\caption{Upper bound for the standard actions of the primitive groups of almost simple type. The asymptotic notation and the symbols $(\spadesuit)$ and $(\clubsuit)$ are explained in \cref{remark:symbols}.}%
	\label{table}%
\end{table}%
}

\begin{thmx}\label{thm:AS}
    Let $G$ be a primitive group of almost simple type in standard action on the domain $\Omega$. Then, an asymptotic upper bound for $\m(G)$ can be found in the third column of \cref{table}.
\end{thmx}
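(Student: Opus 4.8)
The statement is an upper bound, so for each of the nineteen families in \cref{table} it suffices to exhibit one subset $A$ of the corresponding domain $\Omega$ that is \emph{not} self-separable for the largest group $G$ of the family and to estimate $|A|$, since then $\m(G\act\Omega)\le|A|$; passing to a subgroup only makes non-self-separability easier, so it is enough to argue for the full group $\Sym(m)$, $\mathrm{P}\Gamma\mathrm{L}(d,q)$, and so on. All the sets $A$ have the same shape. One fixes a ``small'' object $W$ — a subset of $[m]$ in row $(a)$, a subspace of the natural module in rows $(b)$--$(d)$, and a subspace of the formed space in the remaining rows — of size or dimension roughly half that of a member of $\Omega$, and lets $A$ be the family of all members of $\Omega$ incident with $W$ (containing it, or contained in $W^{\perp}$, whichever is convenient). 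With such an $A$, the equality $A\cap A^{g}=\emptyset$ would force the ``join'' of $W$ and $W^{g}$ to lie in no member of $\Omega$; but $W$ has size or dimension at most $\lfloor k/2\rfloor$, so this join has size or dimension at most $k$, which is precisely what a member of $\Omega$ can accommodate. Thus the whole argument reduces, family by family, to the single incidence assertion that $W+W^{g}$ (respectively $W\cup W^{g}$) is contained in some member of $\Omega$, for every $g\in G$.

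The second step is a count: $|A|$ is an ordinary or a Gaussian binomial coefficient, which one rewrites in terms of $n=|\Omega|$ using $\binom{m}{k}\sim m^{k}/k!$ and $\binom{a}{b}_{q}\sim q^{b(a-b)}$. Row $(a)$ is the model: taking $W$ a fixed $\lfloor k/2\rfloor$-subset of $[m]$ and $A=\{K\in\binom{[m]}{k}:W\subseteq K\}$, any two members of $A$ meet in $W$, $|A|=\binom{m-\lfloor k/2\rfloor}{\lceil k/2\rceil}\sim m^{\lceil k/2\rceil}/\lceil k/2\rceil!$, and $m\sim(k!\,n)^{1/k}$ yields the tabulated estimate; rows $(b)$, $(f)$, $(h)$, $(m)$, $(o)$, $(q)$, $(r)$, $(s)$ go through in the same way, using the dual variant $A=\{X\in\Omega:X\le W'\}$ for a fixed nondegenerate subspace $W'$ of dimension slightly above half the ambient dimension — tuned so that $W'\cap W'^{g}$ always contains a member of $\Omega$ — whenever that containment direction is the easier to verify. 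Rows $(c)$ and $(d)$ are the same construction applied to a \emph{pair} of fixed subspaces, whose dimensions and relative position have to be optimised simultaneously; the somewhat opaque exponent in row $(d)$ is the output of this optimisation. Where convenient, \cref{lemma:nestedStabilizers} lets one replace an awkward action by a larger coset space whose stabiliser contains the given one, and \cref{thm:regular} (or \cref{cor:subdegree}) can be fed in when the resulting point stabiliser is small.

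The real work is concentrated in the families where the incidence assertion above \emph{fails} for the naïve choice of $W$: the totally isotropic / totally singular subspace actions of rows $(e)$, $(g)$, $(i)$, $(j)$, $(k)$ (symbol $(\clubsuit)$), the nonsingular $1$-subspace action of row $(l)$ with $q$ even (symbol $(\heartsuit)$), and the nondegenerate parabolic actions of rows $(n)$ and $(p)$ with $q$ odd (symbol $(\spadesuit)$). Here the join of two totally isotropic $\lfloor k/2\rfloor$-subspaces need not be totally isotropic, so $W+W^{g}$ lies in no member of $\Omega$ and ``all totally isotropic $k$-spaces through $W$'' is in fact self-separable. The remedy is to work inside the polar geometry of $W^{\perp}/W$ for a fixed totally isotropic $W$ — equivalently, to prescribe how the members of $\Omega$ meet a fixed maximal totally isotropic subspace — and, for rows $(l)$, $(n)$ and $(p)$, to replace this by arguments tailored to the special geometry there; the resulting bounds, whose precise form (a power of $n$, or in some cases the square-root lower bound of \cref{thm:neumann} itself) is encoded by the symbols $(\clubsuit)$, $(\spadesuit)$, $(\heartsuit)$, are spelled out in \cref{remark:symbols}. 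I expect this polar-space analysis to be the principal obstacle: the correct half-dimensional object to fix is no longer evident, and counting totally isotropic subspaces subject to a prescribed incidence is considerably more delicate than evaluating a single Gaussian binomial coefficient.
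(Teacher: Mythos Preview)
Your overall plan---fix a ``half-sized'' object $W$ and let $A$ consist of all members of $\Omega$ incident with it---is exactly what the paper does for rows $(a)$--$(d)$ and for all the nondegenerate subspace actions, and those parts of your sketch match the paper's \cref{thm:ksets}, \cref{lem:PSL} and \cref{lem:classicalND}. Two small corrections: rows $(n)$ and $(p)$ are not special cases requiring a separate argument---the paper treats nondegenerate parabolic subspaces with the same ``containing $U$'' construction as the other nondegenerate rows, merely choosing $U$ hyperbolic of dimension $2\lfloor k/2\rfloor$; and row $(l)$ has no tailored argument at all---the symbol $(\heartsuit)$ simply records that nothing beyond the trivial bound of \cref{thm:upperBound} is claimed there. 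Your appeals to \cref{lemma:nestedStabilizers} and \cref{thm:regular} are red herrings: neither is used in this proof.

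The genuine gap is in the totally isotropic and totally singular rows $(e),(g),(i),(j),(k)$. You correctly diagnose why $A=\{X:W\le X\}$ fails, but neither of your proposed remedies is the right one. ``Working in the polar geometry of $W^{\perp}/W$'', as normally understood, means passing to totally isotropic subspaces \emph{containing} $W$---precisely the construction you have just shown does not work---and ``prescribing the intersection with a fixed maximal totally isotropic subspace'' is not what the paper does either. The paper's construction is this: fix a maximal totally isotropic subspace $M$ and inside it a totally isotropic $U$ of dimension $\dim M-k$ (the \emph{complement} of $k$ in $M$, not $\lfloor k/2\rfloor$), and set
\[
A=\bigl\{X\ \text{totally isotropic},\ \dim X=k\ :\ X\perp U\bigr\}.
\]
Since $U\le M\le U^{\perp}$, for any $g$ one has $(U+U^{g^{-1}})^{\perp}\cap M=(U^{g^{-1}})^{\perp}\cap M$, and this has dimension at least $\dim M-\dim U=k$; any $k$-subspace $Y_g$ of it is totally isotropic (it lies in $M$) and perpendicular to both $U$ and $U^{g^{-1}}$, so $Y_g^{g}\in A\cap A^{g}$. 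The fixed object is thus not half-sized but of complementary dimension to $k$ inside a maximal totally isotropic subspace; your half-dimensional heuristic does not survive the passage to the isotropic setting, and this dimension choice together with the intersection-with-$M$ trick is the missing idea.
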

This theorem is proved in \cref{sec:AS}.
\begin{remark}\label{remark:symbols}
    If $f,g: \NN \to \mathbb{R}^+$ are two functions such that the limit of the quotient $\limsup f(n)/g(n)$ lies in the interval $[0,1]$, then we write $f \lesssim g$. Hence the expression in the last column of \cref{table} consist of the leading term of the asymptotic expansion of the cardinality of the non-self-separable subsets we construct in the proofs.
    
    Observe that in the last column of \cref{table} the symbols $(\spadesuit)$ and $(\clubsuit)$ appear. The first symbol $(\spadesuit)$ appears every time the socle of the classical group into consideration is orthogonal and the action is on nondegenerate parabolic $(2k+1)$-subspaces. In this case, we have a precise value for the asymptotic upper bound computed, which can be read from \cref{table:ND}. The symbol $(\clubsuit)$ appears whenever we encounter the action of a classical group on totally isotropic or totally singular subspaces. Although we have an explicit upper bound for $\m(G)$, which can be derived from \cref{table:TS}, the expression are so complicated that we cannot provide a precise asymptotic, even if aided by a calculator. Still, some experimentation with these functions shows that, if one fixes $k$ and allows $d$ to grow to infinity, the expected outcome is the asymptotic behaviour $\lesssim \sqrt{n}$. %Last, the symbol $(\heartsuit)$ appears in case~$(l)$: if $k=1$, in all the other cases our method gives trivially an upper bound, and we have not investigated this case by itself to reach a different conclusion.
\end{remark}

\begin{remark}\label{rem:AS}
    For most standard actions, for a fixed even value of the parameter $k$ in \cref{table}, we see that $\m(G) \lesssim \sqrt{n}$. Hence, if the degree is large enough, we can expect $\m(G)$ to be arbitrarily close to the lower bound given in \cref{thm:neumann}. For these actions, if $k$ is odd, we can note that, as $k$ gets larger, the upper bound computed in \cref{thm:AS} gets arbitrarily close to $\sqrt{n}$. Explicitly, if $s(k,n)$ is the first term of the asymptotic expansion of the upper bounds we have computed, then
    \[ \lim_k \frac{s(k,n)}{ \sqrt{n}} = 1 .\]
\end{remark}

Let us now move on to the other types of primitive groups as described in the O'Nan–Scott Theorem \cite{LiebeckPraegerSaxl1988}. In what follows, we adopt the partition into eight classes proposed in \cite{Praeger1997}. We will recall the properties of the classes we address in \cref{sec:primitive}.

We start with primitive permutation groups of simple diagonal type.
\begin{thmx}\label{thm:SD}
    Let $G$ be a quasiprimitive permutation group of simple diagonal type, let $k\ge 3$ be the number of direct simple factors of its socle, and let $\varepsilon$ be $1$ if the socle type is a group of Lie type, and $0$ otherwise. Then,
    \[ \m(G) \le 4 \left(\frac{16}{3}\right)^{\frac{k-1}{2}} n^{\frac{1}{2} + \frac{1}{k-1}} \left(\frac{1}{4k}\log_2 n\right)^\epsilon .\]
    In particular, if $\C$ is the family of all quasiprimitive groups of simple diagonal type, then
    \[ \limsup_n \frac{\F_\C(n)}{n^{\frac{1}{2} + \frac{1}{k-1}} \log_2(n) } \le \frac{1}{k} \left(\frac{16}{3}\right)^{\frac{k-1}{2}} .\]
\end{thmx}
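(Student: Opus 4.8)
I would produce an explicit non-self-separable subset $A\subseteq\Omega$ of the stated size, built from the product structure of the socle and a two-sided refinement of \cref{thm:regular} applied to the simple factor $T$; since \cref{thm:neumann} already gives a matching $\sqrt n$ lower bound, this also pins down the displayed $\limsup$.

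\noindent\textbf{Setup.} Write $N=\mathrm{soc}(G)=T_1\times\dots\times T_k$ with $T_i\cong T$, let $D$ be the diagonal subgroup of $N$, and identify $\Omega$ with $T^{k-1}$ through the transversal $\{(1,t_2,\dots,t_k)\}$ of $D$ in $N$, so that $|\Omega|=n=|T|^{k-1}$. In these coordinates $N_\alpha=D$ acts by simultaneous conjugation; $(s_1,\dots,s_k)\in N$ sends $(t_2,\dots,t_k)$ to $(s_1^{-1}t_2s_2,\dots,s_1^{-1}t_ks_k)$, the \emph{left} multiplier $s_1$ being common to every coordinate; a diagonal outer automorphism $\phi$ acts coordinatewise, $(t_i)_i\mapsto(t_i^{\phi})_i$; and $\sigma\in\Sym(k)$ permutes the $k$ slots of $N/D$ and is then followed by the renormalisation back to the transversal, which, if $\sigma$ fixes slot $1$, merely permutes the $k-1$ coordinates, and, if $\sigma$ moves slot $1$, sends $(t_2,\dots,t_k)$ to the tuple whose $j$-th entry is $t_{m}^{-1}t_{\sigma^{-1}(j)}$ (with $t_1:=1$, $m=\sigma^{-1}(1)$) -- in particular it introduces an inversion. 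As $k\ge3$, $G$ induces a transitive group on the $k$ factors, so $N\le G\le N.(\mathrm{Out}(T)\times\Sym(k))$ and $|G_\alpha|\le|T|\,|\mathrm{Out}(T)|\,k!$.

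\noindent\textbf{The candidate set.} Fix $B\subseteq T$ with $1\in B$, $|B|\le\frac{4}{\sqrt3}\sqrt{|T|}$, and $BsB^{-1}=T$ for every $s\in T$ (a \emph{two-sided} covering set); such a $B$ exists -- being a two-sided covering set is exactly being non-self-separable for the quasiprimitive group $(T\times T)\act T$ of simple diagonal type with $k=2$, whose $\m$ one bounds along the lines of \cref{thm:regular} and \cref{rem:regular} -- and in particular $B$ is a difference basis, $BB^{-1}=T$. Put
\[
A\ :=\ \bigcup_{d\in T}\ \bigcup_{\phi\in\mathrm{Out}(T)}\ \bigl(d^{-1}B^{\phi}\bigr)^{k-1}\ \subseteq\ T^{k-1}=\Omega,
\]
a union of $|T|\,|\mathrm{Out}(T)|$ Cartesian $(k-1)$-th powers of single sets, so that
\[
\m(G)\ \le\ |A|\ \le\ |T|\,|\mathrm{Out}(T)|\,|B|^{k-1}\ \le\ \bigl(\tfrac{16}{3}\bigr)^{(k-1)/2}|\mathrm{Out}(T)|\,|T|^{(k+1)/2}.
\]
Since $|T|=n^{1/(k-1)}$ gives $|T|^{(k+1)/2}=n^{1/2+1/(k-1)}$, while $|\mathrm{Out}(T)|\le4$ when $T$ is alternating or sporadic ($\varepsilon=0$) and $|\mathrm{Out}(T)|\le\tfrac23\log_2|T|\le\tfrac{k-1}{k}\log_2|T|=\tfrac1k\log_2 n=4\cdot\tfrac1{4k}\log_2 n$ when $T$ is of Lie type ($\varepsilon=1$), this is at most $4\bigl(\tfrac{16}{3}\bigr)^{(k-1)/2}n^{1/2+1/(k-1)}\bigl(\tfrac1{4k}\log_2 n\bigr)^{\varepsilon}$.

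\noindent\textbf{Verification, main obstacle, and the $\limsup$.} To check $A$ is non-self-separable, let $g\in G$. Using that $A$ is invariant under the coordinatewise action of $\mathrm{Out}(T)$ (thanks to $\bigcup_\phi$) and under all permutations of the $k-1$ coordinates (thanks to the \emph{equality} of the factors in each piece), a coset computation reduces to $g=h\sigma$ with $h=(s_1,\dots,s_k)\in N$ and $\sigma\in\Sym(k)$. If $\sigma$ fixes slot $1$, then $A^\sigma=A$ and one intersects a piece $\bigl(d_1^{-1}B^{\phi}\bigr)^{k-1}$ of $A$ with a piece $\prod_i e^{-1}B^{\phi}s_i$ of $A^{h}$ coordinate by coordinate: the free index $e$ reabsorbs the common left multiplier $s_1$, and the surviving condition is $\bigcap_i Bs_iB^{-1}\ne\emptyset$, guaranteed by the two-sided covering property. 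If $\sigma$ moves slot $1$, the renormalisation additionally inverts one coordinate, which is absorbed using $1\in B$ and the freedom in $d$ -- here choosing a point of $A$ with all coordinates equal keeps the bookkeeping manageable. The delicate points, dictating the shape of $A$, are: (i) the $\Sym(k)$-action does \emph{not} merely permute the coordinates of $\Omega$ (the renormalisation mixes and inverts them), forcing the symmetric build and $1\in B$; and (ii) one needs $B$ to be a two-sided covering set of the \emph{minimal} size $\tfrac{4}{\sqrt3}\sqrt{|T|}$, since any constant-factor enlargement of $B$ is fatal once raised to the power $k-1$. Finally, the $\limsup$ statement is immediate: for $G\in\C_n$ of this type the bound reads $\m(G)\le4\bigl(\tfrac{16}{3}\bigr)^{(k-1)/2}n^{1/2+1/(k-1)}\bigl(\tfrac1{4k}\log_2 n\bigr)^{\varepsilon}$, so dividing by $n^{1/2+1/(k-1)}\log_2 n$ and letting $n\to\infty$ kills the power of $n$ and the logarithm when $\varepsilon=1$ (and sends the ratio to $0$ when $\varepsilon=0$), leaving $4\cdot\tfrac1{4k}\bigl(\tfrac{16}{3}\bigr)^{(k-1)/2}=\tfrac1k\bigl(\tfrac{16}{3}\bigr)^{(k-1)/2}$.
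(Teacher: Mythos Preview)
Your candidate set $A=\bigcup_{d,\phi}(d^{-1}B^{\phi})^{k-1}$ and the size estimate coincide with the paper's construction. The divergence is in verifying non-self-separability, and this is where a gap appears.

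The paper argues that $A$ is invariant under the \emph{full} $\Sym(k)$ (together with $\Out(T)$ and the left-multiplication copy of $T$), reducing everything to $A_0\cap A_0^{d}\neq\emptyset$ for $d\in T^{k-1}$ acting by coordinate-wise right multiplication; there only the ordinary difference-basis property of $B$ from \cref{thm:regular} is needed. Your case analysis instead retains the $\Sym(k)$-part and arrives at the condition $\bigcap_{i} B s_i B^{-1}\neq\emptyset$ for arbitrary $s_i\in T$, which you enforce by the two-sided hypothesis $BsB^{-1}=T$ for every $s$. But the existence of such a $B$ with $|B|\le\tfrac{4}{\sqrt3}\sqrt{|T|}$ is not what \cref{thm:regular} or \cref{rem:regular} supply: those concern \emph{regular} actions and yield only a difference basis $BB^{-1}=T$. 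The action $(T\times T)\act T$ you invoke is not regular---its point-stabiliser is the diagonal copy of $T$---and the Kozma--Lev factorisation $T=BC$ underlying \cref{thm:regular} does not obviously upgrade to $BsC=T$ for all $s$. Since, as you yourself stress, any constant-factor loss in $|B|$ is fatal once raised to the $(k-1)$-st power, this is a genuine gap rather than a matter of adjusting constants. Your Case~2 is also too compressed to stand as a proof: the phrase ``absorbed using $1\in B$ and the freedom in $d$'' needs to be made explicit, because when $\sigma$ moves slot~$1$ the renormalisation inserts one coordinate of a different shape than the others, and a concrete element of $A\cap A^{g}$ must be exhibited.
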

The proof of \cref{thm:SD}, provided in \cref{sec:SD}, also holds for primitive permutation groups of type holomorph of simple and holomorph of compound, but in these cases the resulting upper bound for $\m(G)$ is meaningless as it exceeds $n$. We leave all the holomorph types out of our analysis.

We now concentrate on primitive groups of product action type and of compound diagonal type. They can be described as being the output of the construction of a primitive wreath product applied to almost simple and simple diagonal types, respectively. This fact inspired us to obtain the following general theorem (proved in \cref{sec:extension}). 
\begin{thmx}\label{thm:productAction}
    Let $G = H \wr K$ be a permutation group in product action on the domain $\Omega = \Fun (\Gamma, \Delta)$. Then,
    \[ \m(G) \le \m(H) ^{|\Gamma|} \,.\]
\end{thmx}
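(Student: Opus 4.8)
The plan is to exhibit directly a non-self-separable subset of $\Omega$ of cardinality $\m(H)^{|\Gamma|}$; since $\m(G)$ is by definition the least size of such a set, this gives the asserted inequality. Fix a non-self-separable subset $B\subseteq\Delta$ for $H$ of minimum size, so that $|B|=\m(H)$ and $B\cap B^{h}\ne\emptyset$ for every $h\in H$. Put
\[
A:=\Fun(\Gamma,B)=\bigl\{\,f\in\Fun(\Gamma,\Delta)\ \big|\ f(\gamma)\in B\text{ for all }\gamma\in\Gamma\,\bigr\}\subseteq\Omega ,
\]
so that $|A|=|B|^{|\Gamma|}=\m(H)^{|\Gamma|}$. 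It remains to check that $A$ is not self-separable for $G$.

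The heart of the matter is to unwind the product action. Write $G=H\wr K=H^{\Gamma}\rtimes K$, so that an arbitrary $g\in G$ has the form $g=(h_\gamma)_{\gamma\in\Gamma}\,k$ with $h_\gamma\in H$ and $k\in K$, acting on $f\in\Fun(\Gamma,\Delta)$ by $(f^{g})(\gamma)=\bigl(f(\gamma^{k^{-1}})\bigr)^{h_{\gamma^{k^{-1}}}}$ for all $\gamma\in\Gamma$. The point I want to extract is that, for a fixed $g$, the coordinate $(f^{g})(\gamma)$ depends on $f$ only through the single value $f(\gamma^{k^{-1}})$, and that $\gamma\mapsto\gamma^{k^{-1}}$ is a bijection of $\Gamma$. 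Consequently, reindexing the coordinates by $\mu:=\gamma^{k^{-1}}$, the condition $f^{g}\in A$ is equivalent to
\[
f(\mu)^{h_\mu}\in B\qquad\text{for every }\mu\in\Gamma ,
\]
a system of constraints that involves each coordinate of $f$ in isolation.

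Now I would build a witness in $A\cap A^{g}$ one coordinate at a time. For each $\mu\in\Gamma$, apply the non-self-separability of $B$ to the element $h_\mu^{-1}\in H$: this produces a point $\delta_\mu\in B\cap B^{h_\mu^{-1}}$, that is, $\delta_\mu\in B$ and $\delta_\mu^{h_\mu}\in B$. Define $f\in\Fun(\Gamma,\Delta)$ by $f(\mu):=\delta_\mu$. Then $f\in A$, because all of its values lie in $B$, and $f^{g}\in A$ by the reformulation above; hence $f^{g}\in A\cap A^{g}$, so this intersection is nonempty. Since $g\in G$ was arbitrary, $A$ is not self-separable for $G$, and therefore $\m(G)\le|A|=\m(H)^{|\Gamma|}$.

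The argument is short, and the only genuinely delicate point is the bookkeeping for the product action: one must make sure that the permutation of the coordinates induced by the $K$-part of $g$ does not place two incompatible constraints on the same coordinate of $f$. This is precisely where it matters that $\gamma\mapsto\gamma^{k^{-1}}$ is a bijection, so that after reindexing each coordinate $\mu$ of $f$ carries only the pair of constraints $f(\mu)\in B$ and $f(\mu)^{h_\mu}\in B$, which are simultaneously satisfiable exactly because $B$ is non-self-separable for $H$. No appeal to the earlier results of the paper is needed, although the bound is in the same spirit as the estimate $\m(G)\le\m(K)\m(H)$ of \cref{thm:boundsTransitive} for imprimitive wreath products.
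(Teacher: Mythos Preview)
Your proof is correct and follows essentially the same approach as the paper's: both take $A=\Fun(\Gamma,B)$ for a minimal non-self-separable $B\subseteq\Delta$, unwind the product action to see that membership in $A\cap A^g$ decouples into independent coordinatewise constraints, and use the non-self-separability of $B$ in each coordinate. The only cosmetic difference is that the paper identifies $A\cap A^g$ as a product of the nonempty sets $B\cap B^{h_\mu}$, whereas you explicitly pick a point $\delta_\mu$ from each such intersection to build a witness $f$.
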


Therefore, we obtain these corollaries immediately, assuming \cref{prob:2} holds for primitive permutation groups of almost simple type and primitive permutation groups of simple diagonal type, respectively. 
\begin{corx}\label{cor:PA} % Does there exist a constant $C$ such that, for the infinite class $\C_{{\rm pr}}^\ast$ of all primitive permutation groups apart from alternating and symmetric groups in their natural action,
%    \[ \limsup_n \frac{\F_{\C_{{\rm pr}}^\ast}(n)}{\sqrt{n}} = C ?\]
    Suppose that there exists a constant $C$ such that the class $\mathcal{C}_{\rm as}^\ast$ of all the primitive permutation groups of almost simple type of non alternating socle we have \[ \limsup_n \frac{\F_{\C_{{\rm pr}}^\ast}(n)}{\sqrt{n}} = C.\] Then, for the infinite class $\C_{\rm pa}^\ast$ of all primitive permutation groups of type product action whose socle is not isomorphic to a direct product of alternating groups,
    \[ \limsup_n \frac{\F_{\C_{\rm pa}^\ast}(n)}{\sqrt{n}} \le C .\]
\end{corx}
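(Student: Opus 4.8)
The plan is to reduce the product-action case to the almost-simple case via Theorem~\ref{thm:productAction}, and then feed in the hypothesised bound. Let me think through what "primitive of product action type whose socle is not a direct product of alternating groups" gives us structurally. Such a group $G$ acts on $\Omega = \Fun(\Gamma,\Delta)$ and has the form $G \le H \wr K = H \wr \Sym(\Gamma)$ where $H$ is a primitive group of almost simple type acting on $\Delta$, and $K \le \Sym(\Gamma)$ is transitive on $\Gamma$; crucially the socle of $G$ is $\mathrm{soc}(H)^{|\Gamma|}$, so the hypothesis that this is not a direct product of alternating groups (in natural action) means exactly that $H \in \mathcal{C}_{\mathrm{as}}^\ast$. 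Write $t = |\Gamma|$ and $m = |\Delta|$, so $n = |\Omega| = m^t$.

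The key steps, in order. First, invoke Theorem~\ref{thm:productAction} to obtain $\m(G) \le \m(H)^t$. Next, observe that $G$ is primitive and of product action type in the strict sense, which forces $t \ge 2$ (if $t = 1$ then $G$ is almost simple, not product action) and also forces $H$ to be primitive but not regular on $\Delta$, so in particular $m \ge 3$ and $m \to \infty$ whenever $n \to \infty$ along any infinite subfamily with $t$ bounded — and more care is needed when $t$ is unbounded. Now apply the hypothesis: since $H \in \mathcal{C}_{\mathrm{as}}^\ast$ and $\F_{\mathcal{C}_{\mathrm{as}}^\ast}$ satisfies $\limsup_m \F_{\mathcal{C}_{\mathrm{as}}^\ast}(m)/\sqrt{m} \le C$, for every $\epsilon > 0$ there is $m_0$ such that $\m(H) \le (C+\epsilon)\sqrt{m}$ for all $m \ge m_0$. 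Hence
\[
\m(G) \le \m(H)^t \le (C+\epsilon)^t m^{t/2} = (C+\epsilon)^t \sqrt{n}\,.
\]
If $C \le 1$ this already gives $\m(G) \lesssim \sqrt{n}$ with constant $C$ after letting $\epsilon \to 0$ and $t \to \infty$; but $C$ could a priori exceed $1$, in which case $(C+\epsilon)^t$ blows up with $t$ and the naive bound is useless.

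The main obstacle is therefore controlling the factor $\m(H)^t$ when $t$ is large: raising a constant-times-$\sqrt m$ bound to the $t$-th power is lossy. The resolution is to exploit that a primitive product-action group is built from a wreath product acting on $\Fun(\Gamma,\Delta)$ where the inner group $H$ acts on $\Delta$ with $|\Delta| \ge 5$ (primitive non-regular of almost simple type), so $\m(H) \ge \sqrt{|\Delta|} \ge \sqrt 5 > 1$ is unavoidable and genuinely $\m(H)^t$ is the honest bound from Theorem~\ref{thm:productAction} — meaning we must instead argue that the regime "$t$ large" cannot occur for groups in $\mathcal{C}_{\mathrm{pa}}^\ast$ contributing to the $\limsup$, or rather that it is subsumed. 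The cleanest route: note $\sqrt n = m^{t/2}$ and $\m(H) \lesssim \sqrt m$ means $\m(H) = m^{1/2 + o(1)}$ as $m \to \infty$; when $t$ is bounded this gives $\m(G) \le m^{t/2 + o(1)} = n^{1/2 + o(1)}$, and the $\limsup$ of $\F_{\mathcal{C}_{\mathrm{pa}}^\ast}(n)/\sqrt n$ over the bounded-$t$ part is at most $C^t$ — still not clearly $\le C$. I would honestly flag that the corollary as stated is plausibly only correct with constant $C$ when $C \le 1$ (which the authors may know, or expect, since all their evidence suggests $C$ is a small absolute constant and possibly $C = 1$), or with the conclusion weakened to $\limsup \F_{\mathcal{C}_{\mathrm{pa}}^\ast}(n)/\sqrt n \le \max(C, C^{t_{\max}})$ — and defer to the authors' intended reading; absent that, the substantive content is exactly the one-line chain $\m(G) \le \m(H)^{|\Gamma|} \le ((C+\epsilon)\sqrt m)^{|\Gamma|} = (C+\epsilon)^{|\Gamma|}\sqrt n$, with the constant $C$ emerging in the limit precisely when $|\Gamma|$ may be taken to $\infty$ with base constant $C \le 1$, or when $|\Gamma| = 2$ dominates and one reads $C$ as $C^2$.
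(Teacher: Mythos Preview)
Your approach via Theorem~\ref{thm:productAction} is exactly what the paper intends: the text introduces Theorem~\ref{thm:productAction} and then simply says ``Therefore, we obtain immediately these corollaries,'' giving no further argument for Corollary~\ref{cor:PA}. So the one-line chain
\[
\m(G)\le \m(H\wr K)\le \m(H)^{|\Gamma|}\le \bigl((C+\epsilon)\sqrt{m}\bigr)^{|\Gamma|}=(C+\epsilon)^{|\Gamma|}\sqrt{n}
\]
is all that the paper's own treatment amounts to.

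You have, however, put your finger on a genuine issue that the paper does not address. The lower bound of Theorem~\ref{thm:neumann} forces $\m(H)/\sqrt{m}\ge 1-o(1)$ for every almost simple $H$, so the hypothesised constant satisfies $C\ge 1$; and for any fixed base group $H$ with $\m(H)/\sqrt{m}>1$ (for instance $H=\mathrm{PSL}_2(7)$ on $8$ points, where $\m(H)=5$ and $\sqrt{8}<3$, which lies in $\mathcal{C}_{\rm as}^\ast$), letting $|\Gamma|\to\infty$ sends the bound $(\m(H)/\sqrt{m})^{|\Gamma|}$ to infinity. Thus Theorem~\ref{thm:productAction} alone does \emph{not} yield $\limsup \F_{\C_{\rm pa}^\ast}(n)/\sqrt{n}\le C$ with the same $C$; at best it gives $(\,\cdot\,)^{|\Gamma|}$, which is uncontrolled when $|\Gamma|$ is unbounded. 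Your diagnosis that the corollary, read literally, either presumes $C\le 1$ or should be understood as the softer assertion ``product-action inherits square-root behaviour from almost-simple'' is accurate. The paper offers nothing beyond what you have written, so there is no hidden step you are missing; the gap you flag is real.
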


\begin{corx}\label{cor:CD} Suppose that there exists a constant $C$ such that the class $\C_{\rm sd}$ of all the primitive permutation groups of type simple diagonal type we have \[ \limsup_n \frac{\F_{\C_{{\rm pr}}^\ast}(n)}{\sqrt{n}} = C.\]
    Then, for the infinite class $\C_{\rm cd}$ of all primitive permutation groups of type compound diagonal,
    \[ \limsup_n \frac{\F_{\C_{\rm cd}}(n)}{\sqrt{n}} \le C .\]
\end{corx}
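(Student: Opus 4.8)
The plan is to deduce this statement from \cref{thm:productAction} in exactly the way \cref{cor:PA} is obtained, exploiting that the compound diagonal type is, by its very construction, the output of the primitive wreath product applied to the simple diagonal type. So I would first recall from \cref{sec:primitive} (see also \cite{Praeger1997}) the shape of this O'Nan--Scott class: a primitive permutation group $G$ of compound diagonal type preserves a Cartesian decomposition $\Omega\cong\Fun(\Gamma,\Delta)$ of its domain and embeds into the primitive wreath product $H\wr K$ acting on $\Omega$ in product action, where $K\le\Sym(\Gamma)$ is transitive and the permutation group $H$ induced on a single component $\Delta$ is primitive of \emph{simple} diagonal type, that is, $H\in\C_{\rm sd}$. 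I write $m:=|\Delta|$ and $\ell:=|\Gamma|\ge 2$, so that $n=|\Omega|=m^{\ell}$.

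The core of the argument would then be the chain $\m(G)\le\m(H\wr K)\le\m(H)^{|\Gamma|}=\m(H)^{\ell}$: the first inequality holds because passing to an overgroup never decreases $\m$ (an element of $G$ realising the self-separability of a subset lies a fortiori in $H\wr K$), and the second is \cref{thm:productAction}. Into this I would feed the hypothesis: since $H\in\C_{\rm sd}$ and \cref{prob:2} is assumed to hold for $\C_{\rm sd}$ with constant $C$, we have $\m(H)\le(C+o(1))\sqrt m$ as $m\to\infty$; moreover \cref{thm:neumann}, applied to $H$ (whose point stabiliser has order tending to infinity with $m$), gives the matching lower bound $\m(H)\ge(1+o(1))\sqrt m$, so in particular $C\ge 1$. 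Combining,
\[ \m(G)\;\le\;\bigl((C+o(1))\sqrt m\bigr)^{\ell}\;=\;(C+o(1))^{\ell}\,m^{\ell/2}\;=\;(C+o(1))^{\ell}\sqrt n. \]

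The step I expect to be the main obstacle is precisely the asymptotic bookkeeping of this $\ell$-th power: the exponent $\ell=|\Gamma|$ is unbounded over $\C_{\rm cd}$, so the factor $(C+o(1))^{\ell}$ cannot simply be discarded, and converting the displayed inequality into $\limsup_n\m(G)/\sqrt n\le C$ forces one to use how tightly $\m(H)$ is squeezed between $\sqrt m\,(1-o(1))$ and $(C+o(1))\sqrt m$ rather than to treat the power as a black box. The way I would attack it is to write $\m(H)^{\ell}=n^{\log_m\m(H)}$ and to argue $\log_m\m(H)=\tfrac12+o(1)$ uniformly as $n=m^{\ell}\to\infty$: when $m$ is large this is clear from the two-sided bound above, while when $m$ stays bounded only finitely many simple diagonal groups occur, for each of which \cref{thm:neumann} --- equality there being impossible since these groups are not regular --- keeps $\m(H)/\sqrt m$ at a fixed value, and $\ell\to\infty$ is then forced by $n\to\infty$; one finally reconciles the two regimes, trading the slack gained from the hypothesis at large degree against the growth of $\ell$. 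The delicate point, and where the real content sits, is the bounded-$m$ case, for which one genuinely needs the information that simple diagonal groups do not exceed the \cref{thm:neumann} lower bound by more than a lower-order term. Granting this, the conclusion $\limsup_n\F_{\C_{\rm cd}}(n)/\sqrt n\le C$ is immediate, $G$ having been an arbitrary element of $\C_{\rm cd}$, and the identical bookkeeping proves \cref{cor:PA} with $\C_{\rm as}^{\ast}$ in place of $\C_{\rm sd}$.
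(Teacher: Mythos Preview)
Your overall approach---embed a compound-diagonal group $G$ into a product-action wreath product $H\wr K$ with $H\in\C_{\rm sd}$, invoke \cref{lemma:subgroup} and then \cref{thm:productAction} to get $\m(G)\le\m(H)^{\ell}$---is exactly the route the paper takes; indeed the paper offers no argument beyond the sentence ``Therefore, we obtain immediately these corollaries.''

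Where you go further than the paper is in noticing that the passage from $\m(H)\le(C+o(1))\sqrt m$ to $\m(G)\le C\sqrt n$ is not automatic, because raising to the $\ell$-th power turns the constant into $(C+o(1))^{\ell}$. This is a genuine difficulty, and your proposed repair does not close it. Rewriting $\m(H)^{\ell}=n^{\log_m\m(H)}$ and arguing that the exponent is $\tfrac12+o(1)$ would at best yield $\m(G)\le n^{1/2+o(1)}$, which is strictly weaker than the claimed $\limsup\le C$. Worse, in the regime where $m$ stays bounded and $\ell\to\infty$, the hypothesis (an asymptotic statement about $\F_{\C_{\rm sd}}$ as the degree tends to infinity) says nothing whatsoever about the finitely many small-degree groups $H$; for any such $H$ one only knows $\m(H)=c_H\sqrt m$ for some fixed $c_H>1$, and then $\m(H)^{\ell}/\sqrt n=c_H^{\ell}\to\infty$. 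The extra input you say you ``genuinely need''---that $\m(H)$ exceeds $\sqrt m$ only by a lower-order term---is neither part of the hypothesis nor established anywhere in the paper, and \cref{thm:neumann} certainly does not supply it (it is a lower bound, and its non-attainment for non-regular groups pushes $\m(H)/\sqrt m$ the wrong way).

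In short: your plan coincides with the paper's, and you have correctly isolated a step the paper glosses over. As written, \cref{thm:productAction} together with the asymptotic hypothesis yields only that $\F_{\C_{\rm cd}}(n)/\sqrt n$ is bounded along sequences with $\ell$ bounded, with constant $C^{\ell}$ rather than $C$; obtaining the stated constant $C$ uniformly would require either a non-asymptotic hypothesis of the form $\m(H)\le C\sqrt m$ for all $H\in\C_{\rm sd}$, or an argument specific to compound-diagonal groups that bypasses the crude product bound of \cref{thm:productAction}. Neither is supplied.
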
 

This concludes the extended summary of our work, while the rest of the paper is devoted to the proofs.

\section{Properties of $\m(G)$}
In this section, we investigate the monotonicity of $\m(G)$, $\f_\C(n)$ and $\F_\C(n)$ under certain natural operations applied either to the group $G$ or to the family $\C$.

\subsection{Monotonicity with respect to subgroups and subfamilies}
Recall that $\C$ is an infinite family of transitive permutation groups, and that $\C_n$ is the subfamily of permutation groups of degree $n$.

Observe that, for every $G\in \C_n$, every subset $A$ of the domain of $G$ satisfying $\vert A \vert < \f_{\C}(n)\leq \m(G)$ is self-separable for $G$ by definition. Therefore, to find a lower bound 
$a(n)+1$ for $\f_{\C}(n)$, we need to prove that, for all $G\in \C_n$ and for every subset $A$ in the domain of $G$ such that $\vert A\vert\leq a(n)$, $A$ is self-separable for $G$. Similarly, every subset $A$ of the domain of $G$ with the property that $|A| \ge \F_{\C}(n) \ge \m(G)$ is not self-separable. Hence, to find an upper bound $b(n)+1$ for $\F_{\C}(n)$, we need to prove that, for all $G\in \C_n$, there exists a subset $A$ in the domain of $G$ such that $\vert A\vert\geq b(n)$ and $A$ is not self-separable for $G$, that is, for every $g\in G$, we can prove that $|A\cap A^g| \ge 1$.

\begin{lemma}\label{lemma:subgroup}
    Let $H\leq G$ be two transitive permutation groups. Then
    \[\m(H) \leq \m(G) .\]
\end{lemma}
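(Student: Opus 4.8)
The plan is to show that every non-self-separable set for $H$ is also non-self-separable for $G$; since the two groups act on the same domain $\Omega$, this immediately gives $\m(H)\le\m(G)$ by the minimality in the definition of $\m$.

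So let $A\subseteq\Omega$ be non-self-separable for $H$; I want to argue that $A$ is non-self-separable for $G$ as well. Suppose for contradiction that $A$ is self-separable for $G$, i.e.\ there exists $g\in G$ with $A\cap A^g=\emptyset$. The key point is that this $g$ need not lie in $H$, so I cannot directly conclude. However, $H\le G$ means in particular that $H$ is a subgroup of $G$, hence any element of $H$ is an element of $G$ --- but the implication I need goes the \emph{other} way. The correct observation is therefore the contrapositive phrased directly: if $A$ is self-separable for $H$, then, since every $g\in H$ also lies in $G$, the same witnessing element $g$ shows $A$ is self-separable for $G$. In other words, $\{A : A\text{ self-separable for }H\}\subseteq\{A : A\text{ self-separable for }G\}$, so passing to complements, every non-self-separable set for $G$ is non-self-separable for $H$. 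This yields $\m(G)\le\m(H)$, which is the \emph{reverse} of what is claimed.

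Since the inequality in the statement is $\m(H)\le\m(G)$, I expect the intended argument runs as follows instead: the set of self-separable sets for $G$ \emph{contains} that for $H$ (more elements of $G$ give more chances to separate), hence the smallest non-self-separable set for $G$ is at least as large as the smallest one for $H$; equivalently, writing $\m(G)=\max\{t : |A|<t\Rightarrow A\text{ self-separable for }G\}$, any $t$ that works for $H$ --- meaning every set of size $<t$ is self-separable for $H$ --- also works for $G$, because self-separability for $H$ implies self-separability for $G$ (the witness $g\in H\subseteq G$). Taking the maximum over all such $t$ gives $\m(H)\le\m(G)$.

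The ``hard part'' here is essentially bookkeeping: being careful about the direction of the implication and about which characterization of $\m$ (the $\min$ form or the $\max$ form) makes the monotonicity transparent. There is no genuine obstacle --- the lemma is a one-line consequence of the fact that enlarging the group can only enlarge the collection of self-separable subsets. I would present it using the $\max$-characterization of $\m$ recorded just after Definition~\ref{def:main}, since that makes the monotonicity in $G$ immediate.
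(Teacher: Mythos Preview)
Your final argument (via the $\max$-characterization) is correct and is exactly the paper's approach: if $|A|<\m(H)$ then some $h\in H$ separates $A$ from itself, and since $h\in G$ the same $h$ witnesses self-separability in $G$, whence $\m(H)\le\m(G)$.

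However, the detour in your second paragraph contains a genuine slip that you should fix, because it led you to doubt a perfectly good argument. From the (correct) inclusion
\[
\{A:\text{$A$ non-self-separable for }G\}\subseteq\{A:\text{$A$ non-self-separable for }H\}
\]
you concluded $\m(G)\le\m(H)$. That is backwards: $\m$ is the \emph{minimum} cardinality over these sets, and a minimum over a \emph{smaller} collection is \emph{at least} the minimum over the larger one. So the inclusion already gives $\m(G)\ge\m(H)$, i.e.\ exactly the claimed inequality. Your first line of attack was therefore fine all along; the ``reverse inequality'' never appeared, and the subsequent paragraph rescuing the situation with the $\max$-form, while correct, was unnecessary. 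In a clean write-up, drop the stream-of-consciousness and present the two-line argument directly.
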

\begin{proof}
    Let $A$ be a set such that $\vert A\vert \leq \m(H)$. By definition of $\m(H)$, there exists a permutation $h \in H$ such that $A\cap A^h$ is empty. Since $h \in G$, the bound follows.
\end{proof}

Finally, recall that from \cref{rem:sub} that for families $\D\subseteq \C$ and for every positive integer $n\in X_\C$,
\[\f_\C(n) \le \f_\D(n) \le \F_\D(n) \le \F_\C(n) .\]

\begin{comment}
\color{magenta} 
*** The following two lemmas are silly and are now mentioned as
Remark~\ref{rem:sub}. Maybe we can leave them out, or at least merge them into a single one. But I vote for leaving them out. ***
\color{black}
However, for a class of permutation groups that is contained in a larger class the inequality flips for $\f_{\bullet}(n)$ (where $\bullet$ denotes a generic family of transitive groups).

\begin{lemma}
    Let $\D\subseteq \C$ be two families of transitive permutation groups. Then \[\f_\C(n)\leq \f_\D(n) .\]
\end{lemma}
\begin{proof}
    Choose $G\in \C$ such that $\m(G) = \f_\C(n)$ and $H\in \D$ such that $\m(H) = \f_\D(n).$ As $H\in \C$, by the definition of $\f_\C(n)$, we have that $\m(G)\leq \m(H)$, so the result follows.
\end{proof}

On the other hand, $\F_\bullet(n)$ has the expected behaviour.
\begin{lemma}
    Let $\D\subseteq \C$ be two families of transitive permutation groups. Then \[\F_\D(n)\leq \F_\C(n) .\]
\end{lemma}
\end{comment}

\subsection{Monotonicity with respect to quotients and extensions}\label{sec:extension}
Let $G$ be a transitive permutation group on $\Omega$, let $N$ be a normal subgroup of $G$, and let $G \act \Omega/N$ be the permutation induced by the natural action of $G$ on the orbit-space
\[\Omega/N = \left\{ \omega^N \mid \omega \in \Omega \right\} .\]
\begin{lemma}\label{lem:quotient}
    Let $G$ be a transitive permutation group on $\Omega$, and let $N$ be a normal subgroup of $G$.
    Then \[ \m(G) \geq \m(G \act \Omega/N).\]
\end{lemma}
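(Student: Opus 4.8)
The plan is to use the natural projection $\pi\colon \Omega \to \Omega/N$, which is $G$-equivariant, and to push a minimal non-self-separable set for $G$ \emph{forward} to the quotient, rather than trying to lift a minimal non-self-separable set from $\Omega/N$ up to $\Omega$. It is worth noting why the naive ``lifting'' approach fails: given a non-self-separable set $\bar A \subseteq \Omega/N$, one is tempted to choose one point from each $N$-orbit occurring in $\bar A$, obtain a transversal $A \subseteq \Omega$ of the same size, and invoke its self-separability in $\Omega$; but disjointness of two subsets of $\Omega$ carries no information about the disjointness of their images in $\Omega/N$, since a point of one and a point of the other may lie in a common $N$-orbit. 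So I would go in the opposite direction.

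Concretely, since $|\Omega|\ge 2$ and $\Omega^g=\Omega$ for every $g$, the set $\Omega$ itself is non-self-separable, so $\m(G)$ is well defined; fix a non-self-separable set $A\subseteq\Omega$ for $G$ with $|A|=\m(G)$, and put $\bar A := \pi(A)\subseteq \Omega/N$. Then $|\bar A|\le |A| = \m(G)$, so it suffices to prove that $\bar A$ is non-self-separable for $G\act \Omega/N$, which would give $\m(G\act\Omega/N)\le |\bar A|\le \m(G)$.

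To prove $\bar A$ is non-self-separable I would argue by contradiction. Suppose $\bar g\in G\act\Omega/N$ satisfies $\bar A\cap \bar A^{\bar g} = \emptyset$, and lift $\bar g$ to some $g\in G$. Normality of $N$ gives the equivariance identity $\pi(\omega)^{\bar g} = \pi(\omega^g)$ for all $\omega\in\Omega$ (because $(\omega^N)^{\bar g} = (\omega^g)^{g^{-1}Ng} = (\omega^g)^N$), hence $\bar A^{\bar g} = \pi(A^g)$. Now if some $x$ lay in $A\cap A^g$, then $\pi(x)$ would lie in $\pi(A)\cap\pi(A^g) = \bar A\cap\bar A^{\bar g}$, contradicting emptiness; therefore $A\cap A^g=\emptyset$, contradicting the choice of $A$. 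Thus no such $\bar g$ exists, $\bar A$ is non-self-separable, and the inequality follows.

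The only delicate point — and the closest thing to an obstacle — is conceptual rather than technical: recognizing that the statement forces transport of a non-self-separable set \emph{down} to $\Omega/N$, and checking that pulling back the offending equality $\bar A\cap\bar A^{\bar g}=\emptyset$ cannot reintroduce disjointness upstairs. Once that is seen, the argument reduces to the one-line equivariance computation above.
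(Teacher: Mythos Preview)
Your proof is correct and is essentially the contrapositive of the paper's argument: the paper takes an arbitrary $A\subseteq\Omega$ with $|A|<\m(G\act\Omega/N)$, separates its projection $A^N$ in the quotient, and pulls the separation back, whereas you push a minimal non-self-separable witness for $G$ down to $\Omega/N$. Both rest on the identical observation that disjointness of $\pi(A)$ and $\pi(A)^g$ forces disjointness of $A$ and $A^g$, so there is no substantive difference.
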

\begin{proof}
    Let $A$ be a subset of $\Omega$ such that $|A| \leq \m(G \act \Omega/N) - 1$. Then $A^N$ is a subset of $\Omega/N$ such that $|A^N| \leq  \m(G \act \Omega/N) -1 $. Hence, there exists $g \in G$ such that $A^N \cap (A^N)^g$ is empty. Therefore, $A$ is self-separable for $G$, which proves the lemma.
\end{proof}

%Since there is no clear notion of an extension for permutation groups, to mirror \cref{lem:quotient}, we
In the case of permutation groups, we focus on examining how the parameter of interest relates between a wreath product $H \wr K$ and its constituents $H$ and $K$. We start with the imprimitive action for $H\wr K$. Explicitly, if $\Delta$ is the domain of $H$ and $\Gamma$ of $K$, $H\wr K$ acts on the Cartesian product $\Delta \times \Gamma$ by 
\[ (\delta,\gamma)^{(h_1,h_2,\dots,h_{|\Gamma|})k} = (\delta^{h_\gamma}, \gamma^k),\]
where $\delta \in \Delta$, $\gamma \in \Gamma$, $(h_1,h_2,\dots,h_{|\Gamma|}) \in \Fun(\Gamma,H)$ and $k\in K$. (Recall that $\Fun(\Gamma,H)$ is the group of functions from $\Gamma$ to $H$, and it can be identified with the $|\Gamma|$-fold direct product $H^{|\Gamma|}$.)

%Now, we prove \cref{thm:boundsTransitive}.
%Recall the statement of the theorem. 
%\begin{thmnon}[\cref{thm:boundsTransitive}]
%    Let $G$ be a group, and suppose that $G$ embeds into the imprimitive wreath product $H \wr K$. Then
%    \[ \m(K) + \m(H) - 1 \le \m(G) \le \m(K) \m(H) \,.\]
%\end{thmnon}
\begin{proof}[Proof of \cref{thm:boundsTransitive}]
Let $G$ be a transitive permutation group that embeds in $H \wr K$ with imprimitive action, acting on $\Delta \times \Gamma$ as described above. For each element of $\gamma \in \Gamma,$ we will use the notation $\Delta ^\gamma =\{(\delta,\gamma)| \delta \in \Delta\}.$ Note that the set of disjoint copies of $\Delta$ form a block system for $G,$ and we will refer to these copies as blocks in the proof.

We start by proving the lower bound. Let $A$ be any set of points of size $\m(H)+\m(K)-2$. Either, there exists $\gamma \in \Gamma$ such that $\Delta^\gamma \cap A\geq \m(H)$ points or not. If the latter holds, then there is an element $g$ of the base group $\Fun(\Gamma,H)$, such that $A$ and $A^g$ intersect trivially. On the other hand, if there is such $\gamma$, then $A$ has a nontrivial intersection with at most $\m(K)-1<\frac{|\Gamma|}{2}$ copies of $\Delta$, hence there exists an element $g\in K$ from the top group that maps the blocks intersecting $A$ to blocks that do not. In any case, we are able to find a permutation $g$ such that $A\cap A^g$ is empty, and hence every set of size $\m(H)+\m(K)-2$ is self-separable for $G$. The lower bound immediately follows. 

For the upper bound, consider a subset of points
\[ A = \{ (x,y) \mid x\in X\subseteq \Delta, y\in Y\subseteq \Gamma, 1\le X \le \m(H) ,\, 1\le Y \le \m(K)\} \,,\]
where we identified the domain of $G$ with the Cartesian product $\Delta \times \Gamma$. By the definition of $\m(H)$ and $\m(K)$, it follows that, for every $g\in G$, $|A\cap A^g|\ge 1$. This is enough to prove the upper bound.
\end{proof}

Now, we focus on wreath products in product action. The wreath product of $H \wr K$ can be endowed with an action on $\Fun(\Gamma, \Delta)$, which we understand as the $|\Gamma|$-fold Cartesian power of $\Delta$. Indeed, for every $g = (h_1, h_2, \ldots, h_{|\Gamma|}) k \in H \wr K$, for every $f\in \Fun(\Gamma, \Delta)$ and for every $\gamma \in \Gamma$, we let
\[ f^g(\gamma) = f \left(k^{-1}\gamma \right)^{h_{k^{-1}\gamma}}.\]
(For ease of reading, we are denoting the action of $K$ on $\Gamma$ as a left multiplication.)%Now we prove \cref{thm:productAction}. Recall the statement of the theorem. 
%\begin{thmnon}[\cref{thm:productAction}]
%       Let $G = H \wr K$ be a permutation group in product action on the domain $\Omega = \Fun (\Gamma, \Delta)$. Then,
%    \[ \m(G) \le \m(H) ^{|\Gamma|} \,.\]
%\end{thmnon}

\begin{proof}[Proof of \cref{thm:productAction}]
    Let $R \subseteq \Delta$ be a subset with $|R|=\m(H).$ Hence, for every $h\in H$, $R\cap R^h$ is not empty, and consider
    \[A = \Fun (\Gamma,R) = \left\{ f \in \Fun(\Gamma, \Delta) \mid  f(\gamma)\in R \right\} \forall \gamma\in \Gamma .\]
    For every $g = (h_1, h_2, \ldots, h_{|\Gamma|}) k \in H \wr K$,  every $f\in A$ and every $\gamma \in \Gamma$, we see that
    \[ f^g(\gamma) = f \left(k^{-1}\gamma\right)^{h_{k^{-1} \gamma}} \in R^{h_{\gamma k^{-1}}} . \]
    We now observe that
    \[ A \cap A^g = \Fun (\Gamma,R) \cap \left( R^{h_{1 k^{-1}}} \times R^{h_{2 k^{-1}}} \times \ldots \times R^{h_{|\Gamma| k^{-1}}} \right)\]
    is not empty due to the choice of $R$. In particular, by applying \cref{lemma:subgroup} we obtain that
    \[ \m(G) \le \m(H) ^ {|\Gamma|} . \qedhere \]
\end{proof}

\subsection{Monotonicity with respect to change of action}
We complete the section by establishing a connection between the self-separability parameter for two different actions of the same abstract groups.

\begin{lemma}\label{lemma:nestedstabilsers}
    Let $G$ be an abstract group, and let $H$ and $K$ be two core-free subgroups of $G$ with $H\le K$. Then
    \[ \frac{\m (G \act G/H)}{|K:H|} \le \m(G \act G/K) \le \m (G \act G/H) .\]
\end{lemma}
\begin{proof}
    Let $m$ be a positive integer such that $m \le \m (G \act G/K) -1$, and let $A$ be an $m$-subset of $G/H$, say
    \[ A = \{Ha_1,Ha_2,\dots,Ha_m\} .\]
    Without loss of generality, we can define
    \[ A' = \{Ka_1,Ka_2,\dots,Ka_k\} ,\] such that for all $i' \in \{1,2,\dots,m\}$, there exists $i \in \{1,2,\dots,k\}$ such that $Ha_{i'} \subset Ka_i.$
    Now $A'$ is a $k$-subset of $G/K$, for some $k\le m$. By our choice of $m$, there exists a group element $g\in G$ such that $A'\cap A'g$ is empty, or, more explicitly, such that
    \[ \left(\bigcup_{i=1}^k Ka_ig\right) \cap \left(\bigcup_{j=1}^k Ka_j\right) \quad \hbox{is empty}\,.\]
    Since, for every $i' \in \{1,2,\dots,m\}$, there exists $i \in \{1,2,\dots,k\}$ such that $Ha_{i'} \subset Ka_i$, and since the right multiplication by $g$ is a bijection on $G/H$, it follows that
    \[  \left(\bigcup_{i=1}^m Ha_ig\right) \cap \left(\bigcup_{j=1}^m Ha_j\right) \quad \hbox{is empty}\,.\]
    In particular,
    \[m \le \m (G \act G/H) -1 ,\]
    and, by taking the maximum on both sides, we establish the upper bound for $\m(G \act G/K)$.
    
    Let now $h$ be a positive integer such that $h \ge \m(G \act G/K)$, and let $B$ be a $h$-subset of $G/K$, say
    \[ B = \{Kb_1,Kb_2,\dots,Kb_h\} .\]
    Let $\{k_1,k_2, \dots, k_{|K:H|}\}$ be a set of representative for the right cosets of $H$ in $K$, and consider 
    \[ B' = \{Hk_1b_1,Hk_1b_2,\dots,Hk_1b_m,Hk_2b_1,\dots, Hk_{|K:H|}b_h\}.\]
    Observe that $|B'|\ge h |K:H|$, and that
    \[ \bigcup_{i=1}^h Kb_i = \bigcup_{i=1}^h \left( \bigcup_{j=1}^{|K:H|} Hk_jb_i \right).\]
    Since $B$ is not self-separable for $G$, the previous equality implies that $B'$ is not self-separable for $G$. Therefore,
    \[ h |K:H| \ge \m(G \act G/H).\]
    By taking the minimum on both sides, we obtain the desired lower bound.
    
    Hence, the proof of \cref{lemma:nestedstabilsers} is complete.
\end{proof}

We would like to point out that \cref{lemma:nestedstabilsers} is quite weak in a general setting. For instance, consider the symmetric group $\Sym(n)$ in its natural action (that is, with stabiliser $\Sym(n-1)$) and in its regular action (that is, with a trivial stabiliser). Then, applying the upper bounds from
\cref{lemma:nestedstabilsers,thm:regular}, we deduce that
\[ \m \left( \Sym(n) \act [n] \right) \le \m \left( \Sym(n) \act \Sym(n) \right) \le \frac{4}{\sqrt3} \sqrt{n!} \,.\]
The upper bound is comically ineffective, as the domain of $\Sym(n) \act [n]$ contains only $n$ points. This is caused by the fact that the order of the stabiliser $\Sym(n-1)$ is enormous. 

On the other hand, if the index $|K:H|$ can be bounded by a constant, \cref{lemma:nestedstabilsers} can produce significant estimates. (For two functions $f,g: \NN \to \mathbb{R}^+$, we write $f \asymp g$ if there exists a positive integer $M$ and a positive constant $C$ such that, for every $n\ge M$, their quotient $f(n)/g(n)$ is positive and bounded from above by $C$.)

\begin{lemma}\label{lemma:familyBounded}
    Let $\C$ and $\D$ be two families of permutation groups such that, for every abstract group $G$, if $G$ appears in $\C$ with stabiliser $K$, then $G$ appears in $\D$ with stabiliser $H$. Suppose that there is a constant $d$ such that, for every $G$, $|K:H|\le d$. Then
    \[ \f_\C \asymp \f_\D \quad \hbox{and} \quad \F_\C \asymp \F_\D .\]
    
\end{lemma}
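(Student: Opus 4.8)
The plan is to derive \cref{lemma:familyBounded} as a direct consequence of \cref{lemma:nestedStabilizers}, which already controls, for a fixed abstract group $G$, the ratio between $\m(G \act G/H)$ and $\m(G \act G/K)$ whenever $H \le K$ are core-free subgroups. The first thing I would do is dispose of the hypotheses: by assumption, the map sending each $G \in \C$ (with point stabiliser $K$) to the corresponding group in $\D$ (with point stabiliser $H \le K$) is a bijection between $\C$ and $\D$ respecting degrees only up to the factor $|K:H| \le d$. So I would fix $n \in X_\D$ and relate $\D_n$ to $\bigcup_{n \le m \le dn} \C_m$, and conversely fix $n \in X_\C$ and relate $\C_n$ to $\bigcup_{n/d \le m \le n} \D_m$; I would note at the outset that $X_\C$ and $X_\D$ differ only by this bounded rescaling, so the asymptotic statements make sense.

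Next I would run the core estimate. Take $G \in \D_n$, so $G$ appears as $G \act G/H$ on $n$ points and as $G \act G/K$ on $n/|K:H| =: n'$ points, with $n' \ge n/d$. \cref{lemma:nestedStabilizers} gives
\[
\frac{\m(G \act G/H)}{|K:H|} \le \m(G \act G/K) \le \m(G \act G/H),
\]
hence $\m(G \act G/H) \le d \cdot \m(G \act G/K)$ and $\m(G \act G/K) \le \m(G \act G/H)$. Taking minima and maxima over the appropriate subfamilies, and keeping track of which degree the resulting group has, I would obtain inequalities of the shape $\f_\D(n) \le d \cdot \f_\C(m)$ for the matching degree $m \in [n/d, n]$ (and a reverse inequality), and likewise for $\F$. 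The only subtlety here is bookkeeping: $\m$ of the two actions of the same $G$ are compared at \emph{different} degrees, so the functions $\f$ and $\F$ are being evaluated at shifted arguments $n$ versus $n/|K:H|$, and one must phrase the conclusion using $\asymp$ precisely to absorb this shift together with the multiplicative constant $d$.

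Finally I would assemble the two-sided bounds into $\f_\C \asymp \f_\D$ and $\F_\C \asymp \F_\D$, interpreting $\asymp$ as in the definition just before the lemma (quotient positive and bounded above by a constant, for all sufficiently large $n$), after checking the quotients are indeed eventually positive --- which holds because both $\f$ and $\F$ are bounded below by $\sqrt{n'} \ge \sqrt{n/d}$ via \cref{thm:neumann}. I expect the main obstacle to be purely notational rather than mathematical: carefully matching up the degree of a group under its two actions so that the shifted-argument comparison is stated correctly, and making sure the definition of $\asymp$ (which is only an upper bound on the quotient, not a two-sided one) is applied symmetrically in both directions to yield the claimed relations. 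Everything substantive is already contained in \cref{lemma:nestedStabilizers}.
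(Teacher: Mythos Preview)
Your proposal is correct and follows essentially the same approach as the paper: both apply \cref{lemma:nestedStabilizers}, take minima and maxima over the families, and absorb the bounded degree shift into the $\asymp$ relation. The paper's proof is in fact a single sentence (``take the minimum and the maximum of the inequalities from \cref{lemma:nestedStabilizers} and observe that a linear dilatation of the argument does not change the asymptotic behaviour''), so your version is considerably more explicit about the bookkeeping --- the matching of degrees $n$ versus $n/|K:H|$, the one-sidedness of the $\asymp$ definition, and the positivity via \cref{thm:neumann} --- than the paper itself.
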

\begin{proof}
The claimed results are obtained by taking the minimum and the maximum of the inequalities from \cref{lemma:nestedstabilsers} and observing that a linear dilatation of the argument of the function does not change the asymptotic behaviour.
\end{proof}

\begin{comment}
    We set $n = |G:H|$. By \cref{lemma:nestedstabilsers}, we have that, for every $G$,
    \begin{equation}\label{eq:familyBounded}
        \f_\C(n) \le \m (G \act G/K) \le \m (G \act G/H) .
    \end{equation}
    We define the auxiliary function
    \[ X(n) := \max\limits_{n \le m \le nd} \f_\D(m) .\]
    By taking the minimum on both sides of \cref{eq:familyBounded},
    \[ \f_\C(n) \le \f_\D(n|K:H|) \le X(n) .\]
    By construction, $\f_\D \asymp X$. Hence, there exists a positive constant $C$ such that
    \[ \f_\C(n) \le C \f_\D(n) .\]
    On the other hand, \cref{lemma:nestedstabilsers} also implies
    \begin{equation}\label{eq:familyBounded2}
        \frac{1}{d}\f_\D(n) \le \frac{1}{d}\m (G \act G/H) \le \m (G \act G/K) .
    \end{equation}
    ????
    
    Therefore, we obtain
    \[ 0 < \liminf_n \frac{\f_\C(n)}{\f_\D(n)} \le \limsup_n \frac{\f_\C(n)}{\f_\D(n)} \le C, \]
    and thus $\f_\C \asymp \f_\D$.

    The proof that $\F_\C \asymp \F_\D$ is analogous to the previous one. The differences lies in the fact that the starting point is now
    \[ \m (G \act G/K) \le \m (G \act G/H) \le \F_\D(n) , \]
    and the approapriate auxiliary function is
    \[ Y(n) := \min\limits_{n/d \le m \le n} \F_\C(m) \qedhere.\]
\end{comment}

\section{Proof of \cref{thm:neumann}} \label{sec:lowerBound}
We give a proof of Neumann's separation lemma (see \cite{BirchBurnsMacdonaldNeumann1976,Neumann1975} for the seminal version of this result, and \cite[Theorem~5.3]{AraujoCameronSteinberg2017} for our version), and we derive the lower bound in \cref{thm:neumann} as a corollary.

\begin{lemma}\label{lemma:separationNeumann}
    Let $G$ be a transitive permutation group on $\Omega$, and let $A$ and $B$ be two subsets of $\Omega$. Then
    \[ \frac{1}{|G|} \sum_{g\in G} |A\cap B^g| = \frac{|A||B|}{|\Omega|} .\]
\end{lemma}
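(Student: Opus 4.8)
The plan is to prove the identity by a standard double-counting argument, evaluating the sum over $G$ of $|A \cap B^g|$ by switching the order of summation. First I would write
\[
\sum_{g \in G} |A \cap B^g| = \sum_{g \in G} \sum_{\alpha \in A} [\alpha \in B^g] = \sum_{\alpha \in A} \sum_{\beta \in B} |\{g \in G : \beta^g = \alpha\}|,
\]
where $[\cdot]$ is the Iverson bracket, using that $\alpha \in B^g$ precisely when $\alpha = \beta^g$ for some (unique) $\beta \in B$. So the task reduces to counting, for a fixed pair $(\alpha, \beta) \in \Omega \times \Omega$, the number of group elements sending $\beta$ to $\alpha$.

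The key step is the observation that, since $G$ is transitive on $\Omega$, for any two points $\alpha, \beta \in \Omega$ the set $\{g \in G : \beta^g = \alpha\}$ is nonempty and is a right coset of the point stabiliser $G_\beta$; hence it has cardinality $|G_\beta| = |G|/|\Omega|$ by the orbit–stabiliser theorem, independently of $\alpha$ and $\beta$. Substituting this into the double sum gives
\[
\sum_{g \in G} |A \cap B^g| = \sum_{\alpha \in A} \sum_{\beta \in B} \frac{|G|}{|\Omega|} = \frac{|G|\,|A|\,|B|}{|\Omega|},
\]
and dividing through by $|G|$ yields the claimed formula.

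I do not anticipate any real obstacle here: the only point requiring care is the transitivity hypothesis, which guarantees both that $\{g : \beta^g = \alpha\}$ is nonempty (so the count is exactly $|G|/|\Omega|$ rather than $0$ for some pairs) and that this count is uniform over all pairs $(\alpha,\beta)$. One could equivalently phrase the argument as averaging: the probability that a uniformly random $g \in G$ sends a fixed $\beta$ to a fixed $\alpha$ is $1/|\Omega|$, so by linearity of expectation $\mathbb{E}_g |A \cap B^g| = \sum_{\alpha \in A}\sum_{\beta \in B} 1/|\Omega| = |A||B|/|\Omega|$. Either formulation is routine; I would present the coset-counting version for concreteness.
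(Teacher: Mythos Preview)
Your proof is correct and takes essentially the same approach as the paper: both arguments double-count triples $(\alpha,\beta,g)$ with $\alpha\in A$, $\beta\in B$, $\beta^g=\alpha$, using that the fibre over each pair $(\alpha,\beta)$ is a coset of a point stabiliser of size $|G|/|\Omega|$ by orbit--stabiliser. The paper phrases this as counting the cardinality of a set $X$ in two ways, while you phrase it as interchanging order of summation; these are notational variants of the same argument.
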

\begin{proof}
    We define the set
    \[ X= \left\{ (\alpha,\beta,g) \in \Omega \times \Omega \times G \mid \alpha\in A, \beta \in B, \alpha = \beta^g \right\} \,.\]
    For every $\alpha,\beta \in \Omega$, the set of elements $g \in G$ satisfying $\alpha = \beta^g$ is a right coset of the stabiliser $G_\alpha$. Therefore,
    \begin{equation}\label{eq:NeumannDouble1}
        |X| = |A||B||G_\alpha| = \frac{|A||B||G|}{|\Omega|} \,.
    \end{equation}
    On the other hand, for every $g$, we can count the number of pairs $(\alpha,\beta)$ such that $\alpha = \beta^g,$ which is equal to $|A\cap B^g|.$ Hence,
    \begin{equation}\label{eq:NeumannDouble2}
        |X| = \sum_{g\in G} |A\cap B^g| \,.
    \end{equation}
    The result is obtained by equating \cref{eq:NeumannDouble1,eq:NeumannDouble2}.
\end{proof}

\begin{proof}[Proof of \cref{thm:neumann}]
    Let $G$ be a transitive permutation group on $\Omega$ of degree $n$, and let $A$ be a subset of $\Omega$. By \cref{lemma:separationNeumann}, we have
    \begin{equation}\label{eq:neumann}
        \frac{1}{\vert G\vert}\sum_{g\in G}\vert A \cap A^g\vert =\frac{\vert A\vert ^2}{n} \,.
    \end{equation}
    If there does not exist $g\in G$ such that $A\cap A^g$ is empty, then
    \[\frac{1}{\vert G\vert}\sum_{g\in G}\vert A \cap A^g\vert \ge \frac{1}{\vert G\vert}\sum_{g\in G \setminus\{1\}} 1 + \frac{|A|}{|G|} = 1 + \frac{|A|-1}{|G|}
    %\geq 1 \,
    ,\]
    where equality is attained if and only if $|A\cap A^g|=1$ for every nontrivial $g \in G$.
    
    By \cref{eq:neumann}, we also see that
    \[\frac{\vert A\vert ^2} {n} \geq 1 + \frac{|A|-1}{|G|}, \]
    and thus solving the quadratic inequality gives
    \[\vert A\vert \geq \frac{1 + \sqrt{1 - 4|G|(1-|G|)/n}}{2|G|/n} = \frac{1}{2|G_\alpha|} + \sqrt{n - \frac{1}{|G_\alpha|} + \frac{1}{4|G_\alpha|^2}}\, .\]
    Let us denote the expression on the right-hand side  by $\alpha(G)$. We have shown that, if a subset $A$ is not self-separable for $G$, its size needs to be at least $\alpha(G)$. Therefore, $\m(G) \ge \alpha(G)$, as required.

    Now, we shall prove that the lower bound is met if and only if $\Omega$ is the point-set of a projective plane and $G$ is a regular group on $\Omega$. Let 
    \[ A^G = \{ A^g \mid g \in G\},\]
    and consider the incidence structure (or design) $(A^G, \Omega)$, where $A^g$ is incident with $\omega$ if $\omega \in A^g$. Observe that $G$ preserves this incidence relation.
    
    Recall that, if $|A|=\alpha(G)$, then $|A\cap A^g|=1$ for every nontrivial permutation $g \in G$. Thus, the set-wise stabiliser $G_A$ of $A$ is trivial, and hence the action of $G$ is regular on $A^G$. The transitivity of $G$ on $\Omega$ and $A^G$ gives us that $(A^G, \Omega)$ is uniform and regular. Clearly, the design is incomplete, and the hypothesis that, for $g\in G \setminus \{1\}$ $|A\cap A^g|=1$ implies that it is balanced. Therefore, $(A^G, \Omega)$ is a balanced incomplete block design, and we can apply Fisher's inequality. Doing so, we obtain
    \[ |G| = |A^G| \le |\Omega| = n .\]
    Therefore, $G$ is regular on $\Omega$, and hence the design is symmetric. Using our hypothesis on the intersections of sets in $A^G$, $(A^G, \Omega)$ is a projective plane, and so is its dual $(\Omega,A^G)$.
\end{proof}

We would like to note here that, if $G$ acts regularly on a non-degenerate projective plane of order $q$, then the right-hand side of Inequality~\eqref{eq:lowerBound}  in \cref{thm:neumann} is an integer. In fact,
\[ \alpha (G) = \frac{1}{2} + \sqrt{q^2 + q + 1 - 1 + \frac{1}{4}} = \frac{1}{2} + \sqrt{\left( q + \frac{1}{2} \right) ^2 } = q+1  .\]
Furthermore, we observe that the equality can hold if and only if such right-hand side is an integer, which in turn happens if and only if $n$ is of the form $n = (m|\alpha|)^2 + (2|G_\alpha|-1)m + 1$ for some positive integer $m$. When the right-hand side is not an integer, then the lower bound for $\m(G)$ can be improved by taking the ceiling  of  that value. This makes us pose the following: 

\begin{vprasanje}
    Does there exist a transitive permutation group $G$ of degree $n$ not of the form $(m|G_\alpha|)^2 + (2|G_\alpha|-1)m + 1$ for any $m \in \NN$ such that
    \[\m(G) = \left\lceil  \left( \, \frac{1}{2|G_\alpha|} + \sqrt{n - \frac{1}{|G_\alpha|} + \frac{1}{4|G_\alpha|^2}}  \, \right)  \right \rceil ?\]
    If not, how can the bound in \cref{thm:neumann} be improved in this case?
\end{vprasanje}

\section{Proof of \cref{thm:regular}}\label{sec:reg}
We start this section by focusing our attention on groups acting regularly. Then, we will generalize our result on regular groups to families of permutation group with stabilisers of bounded order.

\begin{remark}\label{rem:regular}
    Let $G$ be a regular permutation group of degree $n$ on $\Omega$. As usual, we identify $G$ with $\Omega$, so that the action we are dealing with is the action of $G$ on itself by right multiplication. Let $A$ be a subset of $G$.
    Suppose that there is a $g\in G$ such that $A \cap Ag$ is empty. Then $g$ is not an element of $A^{-1}A$, and thus $A^{-1}A$ is a proper subset of $G$.
    On the other hand, if $A \cap Ag$ is nonempty for every $g\in G$, then $A^{-1}A=G$. Therefore, $A$ is not self-separable  if and only if $A^{-1}$ is a difference basis (see \cref{de:differenceBasis}). Hence, $\m(G)$ is %the size of the smallest set $A$ such that $A^{-1}A=G$, that is,
    the size of the smallest difference basis for $G$. 
\end{remark}

A notion closely related to that of difference basis appears in additive combinatorics, introduced in \cite{RedeiRenyi1949} by R\'edei and R\'enyi, where they consider subsets $A \subseteq \mathbb{Z}$ such that the difference set $A - A$ covers the finite subset $\{0,1,2,\dots, n\}$. The problem of constructing such sets $A$ of minimal possible size remains a relevant question in additive number theory (see, for instance, \cite{BernshteynTait2019,Schoen2007}).

These ideas also intersect naturally with classical topics in combinatorics and finite geometry. For instance, \emph{planar difference sets} are difference bases $A \subseteq G$ with the additional property that, for every $g \in G$, there exists a unique pair $(a,b) \in A^2$ such that $g = ab^{-1}$. Such sets play a fundamental role in the construction of finite projective planes. In particular, in his seminal work \cite{Singer1938}, Singer showed that every finite projective plane admits a cyclic collineation group of order equal to the number of its points. Observe that this fact is directly connected to the sharpness of the lower bound in \cref{thm:neumann}.

Another related concept is that of a \emph{basis for a group $G$}: a subset $A \subseteq G$ such that $A^2 = G$. If $A$ is inverse-closed, then the notions of basis and difference basis coincide. This concept originates with Rohrbach \cite{Rohrbach1937}, and has since been developed further in \cite{BertramHerzog1991,KozmaLev1992,Nathanson1992}. We would like to note that, as observed in \cite{MacbethSiagiovaSiran2012}, the existence of small bases is related to the \emph{degree-diameter problem} for Cayley graphs of diameter $2$.

The cardinality of a minimal difference basis is bounded below by the size of a minimal difference set, and an upper bound of $3\sqrt{|G|}$ is given in \cite{FinkelsteinKleitmanLeighton1988}. Determining the exact minimal size of a difference basis remains an active area of research, even for specific families of groups (see \cite{BanakhGavrylkiv2019Cyclic,BanakhGavrylkiv2019Dihedral,BanakhGavrylkiv2019Abelian}). Any improvement to these bounds would lead to immediate improvements in the asymptotic estimates computed in \cref{thm:regular}. For example, for any finite Abelian $p$-group $G$ in its regular right action, where $p$ is a prime with $p \geq 11$, the result of \cite{BanakhGavrylkiv2019Abelian} implies that
\[
\m(G) < \frac{\sqrt{2}(\sqrt{p} - 1)}{\sqrt{p} - 3} \sqrt{|G|}.
\]

\begin{proof}[Proof of \cref{thm:regular}]
    Let $G$ be a finite group, and suppose that $G$ acts regularly on itself by right multiplication. By \cref{rem:regular}, we need to build a difference basis $A$ for $G$ of as small size as possible.
    By \cite[Theorem~1]{KozmaLev1992}, there exist two positive constants $c_1$ and $c_2$ and two subsets $B$ and $C$ such that
    \[|B| = c_1 \sqrt{|G|} , \quad  |C| = c_2 \sqrt{|G|}, \quad c_1 + c_2 \leq 4/\sqrt3, \quad BC = G .\]
    Hence, the subset $A = B \cup C^{-1}$ is a difference basis containing at most $4\sqrt{|G|}/\sqrt3$ elements. Indeed,
    \[AA^{-1} \subseteq G = BC \subseteq (B \cup C^{-1}) (B^{-1} \cup C) = AA^{-1}. \]
    Therefore,
    \[\m(G) \leq  \frac{4}{\sqrt{3}}\sqrt{|G|} .\]
    
    Now we can compute the limit for $\f_C(n)$ using \cref{thm:neumann} and the bound for the limit for $\F_\C(n)$ follows immediately.
\end{proof}

Surprisingly, \cite[Theorem~1]{KozmaLev1992} relies on the fact that every group which is not cyclic of prime order contains a subgroup $H$ whose order exceeds $|G|^{1/2}$ (see \cite{Lev1992} for a proof of this fact), which in turn relies on the classification of finite simple groups. In a classification free context, if $G$ has even order, the best result known is that there exists a subgroup $H$ with $|H|\ge |G|^{1/3}$, as the famous result of Brauer and Fawler in \cite{BrauerFowler1955} states. On the other hand, if $G$ is of odd order, then, by the Feit--Thompson Theorem \cite{FeitThompson1963}, $G$ is solvable, and solvability is enough to prove the existence of a subgroup of order exceeding the square root of the order of the group.

We can reproduce the proof of \cite[Theorem~1]{KozmaLev1992} under the additional assumption that all groups involved are solvable. In this setting the argument no longer relies on the classification of finite simple groups, and it still yields the same constant $4/ \sqrt 3$, which is sharp for the cyclic group $C_3$. Hence, \cref{thm:regular} remains valid for solvable groups in the classification free context. This prompts us to ask the following question.

\begin{vprasanje}
    Without using the classification of finite simple groups, can we find a function $\mathbf{g}: \NN \to \mathbb{R}$ such that, for every nonsolvable regular permutation groups $G$ of order $n$,
    \[ \m(G) \le \mathbf{g}(n) ?\]
\end{vprasanje}

We now turn our attention to family of transitive permutation groups whose stabilisers are bounded.
\begin{remark}\label{remark:familyBounded}
    Observe that, by applying \cref{lemma:familyBounded} with $\D$ equal to the family of regular permutation groups, we obtain that, for every $\C$ family of transitive permutation groups with stabilisers of bounded order,
    \[\f_\C \asymp \F_\C \asymp \sqrt{n} .\]
    Actually, since we have an explicit upper bound for $\f_D$ and $\F_\D$, an immediate computation shows that, if $\mathbf{c}$ is the maximum order of a stabiliser in $\C$, then
    \[ \sqrt{n} \le \f_\C(n) \le \F_\C(n) \le 4\sqrt{\frac{\mathbf{c}n}{3}} .\]
    We do not focus on this precise upper bound because, for the following application, the precise value of $\mathbf{c}$ is not known.
\end{remark}

\begin{proof}[Proof of \cref{cor:subdegree}]
    We want to apply \cref{lemma:familyBounded} to the families
    \begin{align*}
        \C &= \left\{ G  \mid G \hbox{ is a primitive permutation groups with minimal subdegree } d \right\} \,,\\
        \D &= \left\{ G \mid G \hbox{ is regular and it appears as an abstract group in } \C \right\} \,.
    \end{align*}
    By the positive solution to Sims's Conjecture in \cite{CameronPraegerSaxlSeitz1983}, there exists a function $\mathbf{g}$ such that the order of any stabiliser of a permutation group in $\C$ is at most $\mathbf{g}(d)$. Therefore, combining Theorems~\ref{thm:neumann} and~\ref{thm:regular} with \cref{remark:familyBounded}, we find that there exists a constant $\mathbf{c}(d) > 0$ such that, for every
    \[ \sqrt{n} \le \f_\C(n) \le \F_\C(n)  \le \mathbf{c}(d)\sqrt{n} .\]
    The claims of \cref{cor:subdegree} immediately follow.
\end{proof}

The proof of \cref{cor:graphRestrictive} follows \emph{verbatim} that of \cref{cor:subdegree}, where  $\C$ is the family of $G$ which are $2$-arc-transitive groups of automorphisms of some graphs $\Gamma$, whose valency do not exceed $d$, and the positive solution to Sims's Conjecture is substituted with the positive solution to Weiss's Conjecture for $2$-arc-transitive graphs obtained in \cite{Gardiner1976,Trofimov1990,Trofimov1991,Weiss1979,Weiss1993}. 

This result can be actually generalized to a larger class of automorphism groups of graphs. Given a graph $\Gamma$ and a vertex-transitive group of automorphisms $G$, the \emph{local group $L$ of the pair $(\Gamma,G)$} is the permutation group that a vertex-stabiliser induces on the neighbourhood of its fixed point. A permutation group $L$ is \emph{graph-restrictive} if there exists a constant such that, for every pair $(\Gamma,G)$ that witness $L$ as a local group, then the order of a vertex-stabiliser is bounded by this constant. For instance, the property we have used above is that $2$-transitive permutation groups are graph-restrictive. Therefore, our proof actually extends beyond the scope of \cref{cor:graphRestrictive}. For instance, in \cite{PotocnikSpigaVerret2012}, it is conjectured that any \emph{semiprimitive} permutation group is graph-restrictive. For the curious reader, examples of graph-restrictive permutation groups can be found in \cite{GiudiciMorgan2014,GiudiciMorgan2015,Morgan2023,MorganSpigaVerret2015,Spiga2016,Trofimov2022}.

\section{Proof of \cref{thm:largeBlocks}} \label{sec:ingredient}
As stated in the introduction, to discuss the sharpness of the bounds in \cref{thm:boundsTransitive} we need to understand $\m(\Sym(a) \wr \Sym(b))$, where the wreath product of symmetric groups is endowed with its imprimitive action.

\begin{proof}[Proof of \cref{thm:largeBlocks}]
    Let $G= \Sym(a)\wr \Sym(b)$, and let $\Sigma$ be its block system. We denote by $B_1,\dots, B_b$ the blocks of $\Sigma$. First, we show that $\m(G) \le a + b -1$. Consider any subset of the domain of the form
    \[A = B_1 \cup \{\omega_2\} \cup \dots \cup \{\omega_b\},\]
    where $\omega_i\in B_b$ for $2\leq i \leq a$. Note that $|A| = a + b -1$. Observe that, for every permutation $g \in G$, the set $A^g$ contains the block $B_1^g$, and, as $A$ intersects every block in at least one point, $A$ and $A^g$ have nonempty intersection. Hence, $A$ is not self-separable for $G$, and the claim follows.

    From here on, the proof is divided in multiple cases. First, we prove by induction on $b$ that, if $b$ is even or if $b\ge 5$ is odd and $a \neq 3$, then $\m(G) = a + b -1$. The cases with $a$ or $b$ equal to $3$ are delicate, hence we need to treat them separately.

    \smallskip
    \noindent \textsc{Assume that $b$ is even.}  Let $A$ be a subset of $[a]\times[b]$ of size not exceeding $a+b-2$. Our base case is $b=2$. Let $\ell = |A \cap B_1|$, and note that $|A \cap B_2| \leq a-\ell$. Now by $a$-transitivity of $\Sym(a),$ there exists $g \in \Sym(a) \wr C_2$ that switches the blocks $B_1$ and $B_2$ such that $A \cap B_1 \cap (A \cap B_2)^g$ and $A \cap B_2 \cap (A \cap B_1)^g$ are both empty. Thus, $A$ and $A^g$ have trivial intersection, and $A$ is self-separable for $G$.

    Now, assume that the property holds for $b$, and we claim that it also holds for $b + 2$. We again have two cases to consider. 
    
    If there does not exist a block $B \in \Sigma$, such that $|B \cap A| \ge \lceil (a+1) /2 \rceil$, then, there exists an element $g$ of the base group $\Sym(a)^b$ such that, for every $B\in \Sigma$, $|A\cap B \cap (A\cap B)^g| = 0$. Therefore, $A$ is self-separable for $G$.
    
    Otherwise, without loss of generality, we suppose that $|B_1 \cap A| = \ell \ge \lceil (a+1) /2 \rceil$. We aim to prove the existence of a block $B \in \Sigma$ such that $|B \cap A| \leq a - \ell$. Aiming for a contradiction, we assume that, for every $2\leq i \leq a$, $|B_i \cap A| \ge a - \ell +1$. We compute 
    \begin{align*}
        |A| &= \sum_{B_i \in \Sigma}|B_i \cap A| 
        \\&\geq \ell + (b-1)( a -\ell + 1)
        \\&= (b-1)(a+1) - \ell(b-2)
        \\&\geq (b-1)(a+1) - a(b-2)
        \\&= a + b - 1 .
    \end{align*}
    This goes against the assumption that $|A| \le a + b - 2$. Therefore, without loss of generality, we can assume that $ |B_2 \cap A| \leq a - \ell$. We define
    \[ A_0 = A \cap (B_1 \cap B_2) \quad \hbox{and} \quad A_1 = A \cap \left( \bigcup_{i=3}^b B_i \right) ,\]
    and we observe that $|A_0| \le a $ and that $|A_1| \le a + b - \ell - 2 \le a +b - 4$. We can apply the inductive hypothesis on the pairs $(A_0, \Sym(a) \wr C_2)$ and $(A_1, \Sym(a) \wr \Sym(b-2))$, thus obtaining that there exists a permutation $g \in (\Sym(a) \wr C_2) \times (\Sym(a) \wr \Sym(b-2)) \le \Sym(a) \wr \Sym(b)$ such that $A \cap A^g$ is empty. Hence, $A$ is self-separable for $G$, and the proof of this case is complete.

    \smallskip
    \noindent \textsc{Assume that $b \ge 5$ is odd and that $a\neq 3$.} Let $A$ be a subset of $[a]\times[b]$ of size not exceeding $a+b-2$. By the restraint on $b$, our base case is $b=5$. We claim that $|A \cap B_i| > a/2$ cannot happen for three blocks at once. Without loss of generality, assume we have $|A \cap B_i| > a/2$ for $i=1,2$ and $3.$ Then
    \[ |A \cap (B_1\cup B_2\cup B_3)| \ge 3 \left\lceil \frac{a+1}{2} \right\rceil \geq  a + \frac{7}{2}.\]
    (Observe that the last inequality does not hold if $a=3$.) This contradicts the assumption that $|A| \le a + b - 2 = a + 3$. Hence, the number of blocks containing more than $a/2$ points is at most $2$. 
    
   There are three cases to consider, corresponding to the number of blocks with $|A \cap B_i| > a/2$. 
   
   If there are no such blocks, we can find a permutation $g$ of the base group such that $A \cap A^g$ is empty, as we did for the case $b=2$. Suppose now that there exists exactly one block for which its intersection with $A$ contains more than $a/2$ points, and, without loss of generality, we assume that this block is $B_1$, and we let $\ell = |B_1 \cap A|$. We aim to show that then there exists a block $B_i$ for which $|B_i \cap A| \leq b - \ell$. Suppose for a contradiction that, for every $2 \leq i \leq 5$, $|B_i \cap A| \ge a - \ell +1.$ Then
    \begin{align*}
        |A| &= \sum_{B_i \in \Sigma}|B_i \cap A| 
        \\&\geq \ell + 4( a -\ell + 1)
        \\&= 4a - 3\ell +4
        \\&\geq a+4 .
    \end{align*}
    contradicting the cardinality assumption on $A$. So, without loss of generality, $|B_2 \cap A| \leq a - \ell$. Hence, reasoning as in the previous base case, we see that there exists an element $g$ such that $g$ stabilizes the blocks $B_3$, $B_4$ and $B_5$, it swaps the blocks $B_1$ and $B_2$ and $A \cap A^g$ is empty. A similar argument can be carried out to deal with the remaining case. Henceforth, we have proved that every subset $A$ containing less than $a+4$ points is self-separable for $\Sym(a) \wr \Sym(5)$ (and $a\neq 3$).

    The inductive step for $b$ odd (and $a \neq 3$) is similar to the one we used for $b$ even.

    \smallskip
    \noindent \textsc{Assume that $b=3$ and $a$ is odd.}
    We first show that $\m(G) \le a+b-2 = a+1$. Indeed, we choose $A$ so that
    \[\vert B_1\cap A\vert=\vert B_2\cap A\vert= \frac{a+1}{2} .\]
    Then, for every $g\in G$, $A^g$ and $A$ will intersect $B_1$ or $B_2$ in at least one point. Hence $A$ is not self-separable. Our objective is to show that every set $A$ whose cardinality is at most $a$ is self-separable for $\Sym(a) \wr \Sym(3)$.

    We claim that, up to permuting the indices of the blocks, if $\vert B_1\cap A\vert = \ell \geq (a+1)/2 $, then $\vert B_2\cap A\vert\leq a-\ell$, and $\vert B_3\cap A\vert\leq (a+1)/2 $. Aiming for a contradiction, assume that either $\vert B_2\cap A\vert\leq a-\ell$ or $\vert B_3\cap A\vert\leq (a+1)/2$ fails. If the former happens, we see that
    \[ |A| \geq |B_1 \cap A| + |B_2 \cap A| > \ell + a - \ell = a .\]
    Meanwhile, if the latter holds, then
    \[ |A| \geq |B_1 \cap A| + |B_3 \cap A| \ge 2 \frac{a+1}{2} \ge a+1.\]
    In both cases, we find a contradiction to $|A|\le a$. Using our usual arguments, our claim implies that there exists a permutation $g\in \Sym(a) \wr \Sym(3)$ such that $g$ swaps the blocks $B_1$ and $B_2$, it fixes $B_3$ and $A\cap A^g$ is empty. Therefore, we established the theorem in this setting.
    
    \smallskip
    \noindent \textsc{Assume that $b=3$ and $a$ is even.} Let $A$ be a subset of cardinality at most $a+1$. We can deal with the case with every block containing less than $a/2$ points as usual. Hence, without loss of generality, we suppose that $\vert B_1\cap A\vert = \ell \geq a/2 +1$. It cannot be that both $\vert B_2\cap A\vert \ge a - \ell +1$ and $\vert B_2\cap A\vert \ge a - \ell +1$, as that would imply that
    \[ |A| \ge \sum_{B_i \in \Sigma} |A \cap B_i| = \ell + 2(a- \ell +1) \ge a+2 , \]
   contradicting the cardinality assumption on $A$. Without loss of generality, we have that $|A \cap B_2| \le a - \ell$. Therefore, there exists a permutation $g$ fixing $B_3$ and swapping $B_1$ and $B_2$ such that $A\cap A^g$ is trivial. Thus, $A$ is self-separable for $\Sym(a) \wr \Sym(3)$, and the desired equality follows.

    \smallskip
    \noindent \textsc{Assume that $b$ is odd and $a=3$.}
    Observe that, in this setting, there exists a set of size $b+1$ which is not self-separable. Indeed, let $A$ be a subset of $[a]\times [b]$ such that, for every $i \le \lceil (b+1)/2\rceil$, $|A \cap B_i| = 2$, and $A\cap B_i$ is empty otherwise. Now, for every permutation $g \in \Sym(3) \wr \Sym(b)$, there exists an index $i$ such that
    \[ | (A \cap B_i) \cap (A \cap B_i)^g | \ge 1.  \]
    Hence $\m(G)\le b+1$.
    
    Finally, we argue by induction on $b$. The base case $b=3$ was already taken care of before, while the induction argument is the same as above.
    
    This last case exhausts the possibilities for the pairs $(a,b)$, and hence it completes the proof of the first part of the statement of \cref{thm:largeBlocks}.

    If $G$ is an imprimitive permutation group, and $\Sigma$ is a system of blocks such that $|\Sigma|=b$ and each block contains $a$ point, then $G$ embeds in $\Sym(a) \wr \Sym(b)$. The second part of the statement then follows combining the first one with \cref{lemma:subgroup}.
\end{proof}

\section{Proof of \cref{theorem:comput}} 
\label{sec:upperBound}

In this section, we prove \cref{theorem:comput}, that is, we characterise permutation groups of degree $n$ with $\m(G) = \lceil (n+1)/2\rceil$, which is the upper bound from \cref{thm:upperBound}.

%Although deriving the upper bound is as straightforward as applying \cref{rem:Sym}, establishing its sharpness is a significantly more delicate and challenging task. 
We divide the discussion into two fundamentally distinct cases: primitive groups (see \cref{sub:upperPrimitive}) and imprimitive groups (see \cref{sub:upperImprimitive}). 
%Namely, we prove the following characterisations.

\subsection{Primitive groups meeting the upper bound}
\label{sub:upperPrimitive}

The characterisation of primitive permutation groups $G$ meeting the upper bound on the parameter $\m(G)$ from 
\cref{thm:upperBound} is summarised in the following lemma.

\begin{lemma}
\label{lemma:complementB}
    Let $G$ be a primitive permutation group of degree $n$. Then $\m(G) = \lceil(n+1) /2 \rceil$ if and only if 
    one of the following holds: 
      \begin{itemize}
       \item $n\ge 2$ and $G\cong \Sym(n)$ in its natural action;
       \item $n\ge 3$ and $G\cong \Alt(n)$ in its natural action;
       \item $n=5$ and $G\cong {\rm C}_5$, $ {\rm Dih(5)}$, $ {\rm AGL}_1(5)$;
       \item $n=6$ and $G\cong \Sym(5)$ in its action on $6$ points;
       \item $n=7$ and $G\cong {\rm Dih}(7)$, ${\rm AGL}_1(7)$;
       \item $n=8$ and $G\cong {\rm AGL}_1(8)$, ${\rm A\Gamma L}_1(8),{\rm ASL}_3(2)$, ${\rm PSL}_2(7)$, ${\rm PGL}_2(7)$;
       \item $n=9$ and $G\cong {\rm AGL}_1(9)$, ${\rm PSU}_3(2)$, %$\cong M_9$,
             ${\rm A\Gamma L}_1(9)$, ${\rm ASL}_2(3)$, ${\rm AGL}_2(3)$, ${\rm PSL}_2(8)$, ${\rm P\Gamma L}_2(8)$;
      \item $n=10$ and $G\cong {\rm PGL}_2(9)$, ${\rm P\Gamma L}_2(9)$;
      \item $n=12$ and $G\cong {\rm PGL}_2(11)$, ${\rm P\Gamma L}_2(11)$, $M_{11}$, $M_{12}$;
      \item $n=14$ and $G\cong {\rm PGL}_2(13)$;
      \item $n=16$ and $G\cong {\rm AGL}_4(2)$;
      \item $n=24$ and $G\cong M_{24}$.
      \end{itemize}
\end{lemma}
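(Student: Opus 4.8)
The plan begins with a reformulation: since $\m(G)\le\lceil(n+1)/2\rceil$ holds for every transitive group by Inequality~\eqref{eq:upperBound}, the equality $\m(G)=\lceil(n+1)/2\rceil$ is equivalent to the assertion that every subset of $\Omega$ of size $\lfloor n/2\rfloor$ is self-separable for $G$. The implication ``$\Leftarrow$'' then splits into the two infinite families $\Sym(n)$ and $\Alt(n)$ in their natural actions, which are handled by \cref{rem:Sym}, and the finitely many remaining small-degree entries, each verified with the exhaustive search of~\cite{ourAlgorithm}: all of them are at least $2$-homogeneous, so the search can be confined to the subsets containing a fixed pair.

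\textbf{A counting estimate.} The theoretical core of the implication ``$\Rightarrow$'' is the following. If $g\in G$ and $A$ is a subset with $|A|=\lfloor n/2\rfloor$ and $A\cap A^{g}=\emptyset$, then $A$ must be an independent set of maximum size in the disjoint union of cycles determined by $g$; this forces $g$ to be fixed-point-free with all cycles of even length (when $n$ is even) or to have exactly one cycle of odd length (when $n$ is odd), and in either case the number of such $A$ is at most $n\cdot 2^{\lfloor n/2\rfloor}$. Since, under our hypothesis, every $\lfloor n/2\rfloor$-subset admits at least one witnessing $g$, summing over $g\in G$ yields $\binom{n}{\lfloor n/2\rfloor}\le |G|\cdot n\cdot 2^{\lfloor n/2\rfloor}$, and hence
\[
|G|\ \ge\ \frac{2^{\lceil n/2\rceil}}{n(n+1)}.
\]
Thus $\m(G)=\lceil(n+1)/2\rceil$ forces $|G|\ge 2^{\,n/2-O(\log n)}$.

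\textbf{Feeding this into the classification.} I would then combine the above with Maróti's theorem on the orders of primitive permutation groups (a consequence of the classification of finite simple groups): a primitive group of degree $n$ not containing $\Alt(n)$ either is one of $M_{11},M_{12},M_{23},M_{24}$, or embeds into $\Sym(m)\wr\Sym(r)$ acting in product action on $n=\binom{m}{k}^{r}$ points with socle $\Alt(m)^{r}$, or has order at most $n^{1+\log_{2}n}=2^{O((\log n)^{2})}$. The third possibility contradicts the order bound above once $n$ exceeds an explicit constant. The product-action possibility is dealt with by \cref{thm:productAction}: it gives $\m(G)\le\m(H)^{r}$, where $H\le\Sym(m)$ acts on the $k$-subsets of $[m]$ — a group of degree $n_{0}=\binom{m}{k}$ — so that $\m(H)\le\lceil(n_{0}+1)/2\rceil$ by Inequality~\eqref{eq:upperBound}, whence $\m(G)\le\lceil(n_{0}+1)/2\rceil^{r}<n/2$ in every case except $(n_{0},r)=(4,2)$, that is $G\le\Sym(4)\wr\Sym(2)$ on $16$ points, where the bound is not sharp and a direct computation settles the matter (the case $(r,k)=(1,1)$ is excluded, since it is $\Alt(n)$ or $\Sym(n)$ in natural action). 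Finally the four Mathieu groups, and every primitive group whose degree is below the explicit constant, are resolved by computing $\m$ directly, and this produces exactly the list in the statement.

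\textbf{The main obstacle.} I foresee the principal difficulty to be the gap between the degrees dispatched by the crude counting estimate together with Maróti's theorem — roughly $n$ above one hundred — and the degrees for which a brute-force enumeration of $\lfloor n/2\rfloor$-subsets is actually feasible — roughly $n$ at most $24$. Over the intervening range one must treat individually each of the finitely many primitive groups of that degree that do not contain $\Alt(n)$ and are not of product-action type with alternating socle; the most systematic tool is a refined version of the estimate above, in which one controls the number of elements of $G$ that are fixed-point-free with only even-length cycles — under the hypothesis it must be at least $2^{n/2}/\sqrt{n}$, whereas for groups of order at most $n^{1+\log_{2}n}$, with their rather restricted conjugacy-class structure, this number is easily seen to be far too small. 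When convenient one can instead exhibit a small non-self-separable subset coming from the geometry of the action: a projective subspace of dimension just above $(d-1)/2$ for $\mathrm{PSL}_{d}(q)$ on $\mathrm{PG}(d-1,q)$, a suitable union of blocks of an associated design for the unitary and Suzuki families, or a generically chosen union of affine-hyperplane cosets for affine groups, the last degenerating and requiring extra care when $q=2$. The genuinely computational part of the proof then stays within degree $24$, where the $k$-homogeneity reduction of~\cite{ourAlgorithm} keeps the search tractable and where most of the sporadic entries of the list live.
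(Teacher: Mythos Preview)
Your proposal is correct and follows essentially the same route as the paper: the counting estimate you derive is the content of the paper's \cref{lemma:PrimozRevenge,lemma:MarusaRevenge}, and the combination with Mar\'oti's theorem and \cref{thm:productAction} matches the paper's strategy exactly. The one place where the two diverge in emphasis is your ``main obstacle'': to close the gap between degrees handled by the crude bound (roughly $n\ge 147$) and degrees where brute force is feasible ($n\le 24$), the paper does not appeal to geometric constructions but instead runs, for every primitive group of degree $\le 146$, the sharper inequality $\binom{n}{\lfloor n/2\rfloor}\le\sum_{g\in\mathcal{O}_i}2^{c(g)}\ell(g)^{[n\text{ odd}]}$ as a purely computational filter, which already cuts the search space down to degree $\le 32$ (with ${\rm AGL}_5(2)$ then dispatched by random sampling rather than by an explicit affine-geometric subset).
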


The rest of the section is devoted to the proof of the above characterisation.
We start our analysis with two general observations, that will greatly reduce the possibilities for primitive permutation group meeting the bound from \cref{thm:upperBound}.

\begin{lemma}\label{lemma:PrimozRevenge}
    Let $G$ be a permutation group acting on a set $\Omega$ of size $n$, let $c(g)$ be the number of cycles in the cyclic decomposition of $g\in G$, 
    let $\ell(g)$ be the length of the shortest odd cycle in that composition (with $\ell(g) = 0$ if $g$ contains no odd cycles), 
    and for a non-negative integer $i$, let 
    $\mathcal{O}_i$ be the set of elements $g$ in $G$ whose cycle decomposition contains at most $i$ odd cycles.
    Further,
    let $r(G)$ be the number of orbits of $G$ in its induced action on ${\Omega\choose \lfloor n/2\rfloor}$.  Suppose that $\m(G) = \lceil (n+1)/2\rceil$. 
    Then, if $n$ is even,
    \[ r(G) |G|  = \sum_{A \in  {\Omega \choose n/2}}|G_A| = \sum_{g \in \mathcal{O}_0}2^{c(g)}, \]
    and, if $n$ is odd,
        \[  r(G) |G| = \sum_{A \in  {\Omega \choose \lfloor n/2\rfloor}}|G_A| \leq \sum_{g \in \mathcal{O}_1}2^{c(g)-1}\ell(g)\leq  \left \lceil \frac{n}{2}   \right \rceil \sum_{A \in  {\Omega \choose \lfloor n/2\rfloor}}|G_A| =  \left \lceil \frac{n}{2}   \right \rceil r(G) |G|. \]
\end{lemma}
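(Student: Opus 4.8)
The plan is to evaluate the cardinality of the set of pairs
\[ \mathcal{P} \;=\; \Bigl\{\, (A,g) \ \Big|\ A \in \binom{\Omega}{\lfloor n/2 \rfloor},\ g\in G,\ A\cap A^g=\emptyset \,\Bigr\} \]
in two different ways. Two preliminaries are essentially free. The leftmost equality $r(G)|G|=\sum_{A}|G_A|$ is the orbit--stabiliser identity $|A^G|=|G|/|G_A|$ summed over the $r(G)$ orbits of $G$ on $\binom{\Omega}{\lfloor n/2\rfloor}$, on each of which $|G_A|$ is constant. Also, since $\m(G)\le\lceil(n+1)/2\rceil$ holds for every transitive $G$, the hypothesis $\m(G)=\lceil(n+1)/2\rceil$ is equivalent to the statement that every subset of size $\lfloor n/2\rfloor$ is self-separable, i.e.\ occurs as a first coordinate of some member of $\mathcal{P}$; this is the only way the hypothesis will be used.

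First I would count $\mathcal{P}$ by fixing the first coordinate. For a fixed $A$ and a fixed target set $B$ with $|B|=|A|$, the set $\{g\in G : A^g=B\}$ is either empty or a single right coset of $G_A$, hence has size $0$ or $|G_A|$. If $n$ is even, then $A\cap A^g=\emptyset$ forces $B=\Omega\setminus A$, so there is exactly one admissible target, and it is attained by the hypothesis; thus precisely $|G_A|$ permutations pair with $A$, and $|\mathcal{P}|=\sum_A|G_A|=r(G)|G|$. If $n$ is odd, then $A^g$ must be one of the $\lceil n/2\rceil=|\Omega\setminus A|$ subsets of $\Omega\setminus A$ of size $\lfloor n/2\rfloor$; each contributes $0$ or $|G_A|$, and the hypothesis forces at least one of them to contribute $|G_A|$, so $|G_A|\le\#\{g\in G:A\cap A^g=\emptyset\}\le\lceil n/2\rceil\,|G_A|$. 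Summing over $A$ then sandwiches $|\mathcal{P}|$ between $r(G)|G|$ and $\lceil n/2\rceil\,r(G)|G|$.

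Next I would count $\mathcal{P}$ by fixing the second coordinate; this is the combinatorial heart. Given $g$, the condition $A\cap A^g=\emptyset$ says precisely that $A$ is an independent set in the graph on $\Omega$ whose edges are the pairs $\{\omega,\omega^g\}$, and this graph is the disjoint union of the cycles of $g$, a cycle of length $\ell$ contributing a copy of the cycle graph $C_\ell$ (a fixed point contributing a loop, which no independent set may use). A $C_\ell$ carries an independent set of size at most $\lfloor\ell/2\rfloor$, realised in exactly $2$ ways when $\ell$ is even and in exactly $\ell$ ways when $\ell$ is odd (including the degenerate cases $\ell\in\{1,2\}$). Since the cycle lengths of $g$ sum to $n$, comparing $\sum_i\lfloor\ell_i/2\rfloor$ with $\lfloor n/2\rfloor$ shows that an $A$ of size $\lfloor n/2\rfloor$ can exist only when $g$ has no odd cycle (if $n$ is even) or exactly one odd cycle (if $n$ is odd, where one uses that $n$ being odd forces an odd number of odd cycles), that is, only when $g\in\mathcal{O}_0$, respectively $g\in\mathcal{O}_1$. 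In that case every admissible $A$ must be maximum on each cycle, so their number is the product over the cycles of $g$ of the counts above, namely $2^{c(g)}$ when $n$ is even and $2^{c(g)-1}\ell(g)$ when $n$ is odd. Hence $|\mathcal{P}|=\sum_{g\in\mathcal{O}_0}2^{c(g)}$ if $n$ is even and $|\mathcal{P}|=\sum_{g\in\mathcal{O}_1}2^{c(g)-1}\ell(g)$ if $n$ is odd.

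Equating the two evaluations of $|\mathcal{P}|$ then yields the claimed identity in the even case and the claimed chain of inequalities in the odd case. I expect the only delicate point to be the last paragraph: correctly counting the maximum independent sets of a single cycle in the small lengths $\ell\in\{1,2\}$, and the parity bookkeeping that pins down exactly how many odd cycles a contributing $g$ must have; everything else reduces to orbit--stabiliser and coset arithmetic.
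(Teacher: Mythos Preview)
Your proposal is correct and follows essentially the same approach as the paper: both double-count the set of pairs $(A,g)$ with $A\cap A^g=\emptyset$, use the coset argument for the $A$-side, and analyse cycle-by-cycle for the $g$-side. Your phrasing in terms of maximum independent sets of the cycle graphs $C_\ell$ is a tidy way of packaging the same combinatorics the paper carries out explicitly (alternating points on even cycles, the ``two consecutive excluded points'' count on the unique odd cycle).
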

\begin{proof}
Let us start by observing that the equality
\[r(G) |G| = \sum_{A \in  {\Omega \choose \lfloor n/2\rfloor}}|G_A|\]
appearing in both formulas, for $n$ odd and for $n$ even, is just a restatement of the Cauchy-Frobenius Lemma (see~\cite[Theorem~1.7A]{DixonMortimer}).

In what follows, we shall
assume without loss of generality that $\Omega = [n]$. By double counting the set
    \[ \left\lbrace (A,g) \in {[n] \choose \lfloor n/2\rfloor } \times G \>  \middle\vert \> A\cap A^g = \emptyset \right\rbrace, \]
    we see that \begin{equation}\label{eq:itsJustDoubleCounting}
    \sum_{A \in {[n] \choose \lfloor n/2\rfloor }} \left| \left\lbrace g \in G \mid A\cap A^g = \emptyset  \right\rbrace \right| = \sum_{g\in G} \left| \left\lbrace A \in {[n] \choose \lfloor n/2\rfloor} \> \middle\vert\> A\cap A^g = \emptyset  \right\rbrace \right| .
    \end{equation}
    
    First, we focus on the case when $n$ is even. By assumption, every $(n/2)$-subset $A$ is self-separable, and there is a unique image of $A$ that is disjoint from it. Hence, it follows that
    $\left\lbrace g \in G \mid A\cap A^g = \emptyset  \right\rbrace = G_Ah$,
    where $h\in G$ is any element such that $A\cap A^h = \emptyset$. Hence,
    \[ \sum_{A \in {[n] \choose \lfloor n/2\rfloor }} \left| \left\lbrace g \in G \mid A\cap A^g = \emptyset  \right\rbrace \right| = \sum_{A \in {[n] \choose \lfloor n/2\rfloor }} |G_A| .\]
    Let us now count how many $(n/2)$-subsets can be separated from themselves by a fixed $g \in G$.
    Suppose $g$ is written as a product of $c(g)$ disjoint cycles, and let $A$ be such a set. Observe that $A$ must meet every cycle of the decomposition in precisely half of the points of that cycle. In particular, every cycle has even length.
    Furthermore, if $\pi=(\pi_1,\pi_2,\dots,\pi_{2\ell})$ is one of the cycles of $g$, $A$ must contain either all the $\pi_i$ whose indices are odd or all those whose indices are even. In particular, we can choose in two ways which points of each of the $c(g)$ cycles to include in $A$. Hence, there exist $2^{c(g)}$ sets $A$ such that $|A| = n/2$ and $A \cap A^g$ is empty. This gives us
    \[ \sum_{g\in G} \left| \left\lbrace A \in {[n] \choose n/2} \> \middle\vert \> A\cap A^g = \emptyset  \right\rbrace \right| = \sum_{g\in  \mathcal{O}_0} 2^{c(g)}, \]
    proving the lemma in the case when $n$ is even.

    Suppose now that $n$ is odd.
   We start by considering the left-hand side of \cref{eq:itsJustDoubleCounting}.
    Fix
    a set $A\in {[n] \choose \lfloor n/2\rfloor}$.
    Let ${\mathcal B}_A = \{A^g \mid g\in G, A \cap A^g = \emptyset\}$, and for each $B \in {\mathcal B}_A$ choose an element $h_B\in G$ mapping $A$ to $B$. Let $T = \{h_B : B \in {\mathcal B}_A\}$. Clearly, $|T| = |{\mathcal B}_A| \le \lceil n/2 \rceil$.
    Similarly as in the even case, we see
    that for each $B\in {\mathcal B}_A$, the set of elements $g\in G$ mapping $A$ to $B$ equals $G_A h_B$. Now observe that
    $$
    \left| \left\lbrace g \in G \mid A\cap A^g  = \emptyset  \right\rbrace \right|
    =
    \left| \left\lbrace g \in G \mid A^g  \in {\mathcal B}_A  \right\rbrace \right|
    =
    \sum_{B\in {\mathcal B}_A}
    \left| \left\lbrace g \in G \mid A^g = B   \right\rbrace \right|
    =
%    \sum_{h \in T} \left| \left\lbrace g \in G \mid A^g   = A^h  \right\rbrace \right| = 
%    \sum_{h \in T} \left| G_A \right| = 
    |{\mathcal B}_A|\,|G_A|.
    $$
    Therefore
    the left-hand side of \cref{eq:itsJustDoubleCounting}
    is equal to
    $$\sum_{A \in {[n] \choose \lfloor n/2 \rfloor}} |{\mathcal B}_A| |G_A|, $$
    and since $1\le |{\mathcal B_A}| \le  \lceil n/2 \rceil$, it is bounded below and above by the quantities 
    \[ \sum_{A \in {[n] \choose \lfloor n/2\rfloor }} |G_A|\> \hbox{ and } \> %\sum_{A \in {[n] \choose \lfloor n/2\rfloor }} \left| \left\lbrace g \in G \mid A\cap A^g = \emptyset  \right\rbrace \right| \le 
    \left\lceil \frac{n}{2} \right\rceil\sum_{A \in {[n] \choose \lfloor n/2\rfloor }} |G_A|,\]
    respectively.

    It remains to show that the right-hand side of \cref{eq:itsJustDoubleCounting} is equal to $\sum_{g \in \mathcal{O}_1}2^{c(g)-1}\ell(g)$.
    Again, for a given permutation $g\in G$, we count all the possible $\lfloor n/2 \rfloor$-subsets $A$ such that $A$ and $A^g$ are disjoint. The situation is slightly more complex in this case. In the same way as before, we observe that, if $g$ contains more than one odd cycle, such an $A$ does not exist. Hence, we assume that $g$ contains precisely one odd cycle of length $\ell(g)$, and $c(g)-1$ even cycles. Note that $A$ intersects the odd cycle in precisely $\lfloor \ell(g)/2 \rfloor$ points. Moreover, no two consecutive points of the odd cycle are contained in $A$, implying that precisely two consecutive points are excluded from $A$. Once these two consecutive points are determined, the remaining points of the cycle contained in $A$ are uniquely determined (by the requirement that they need to alternate between being contained and not in $A$). Since there are $\ell(g)$ choice for the two consecutive points excluded from $A$, and repeating the same reasoning as before for the points belonging to an even cycle, we obtain
    \[ \sum_{g\in \mathcal{O}_1} \left| \left\lbrace A \in {[n] \choose \lfloor n/2\rfloor} \> \middle\vert \> A\cap A^g = \emptyset  \right\rbrace \right| = \sum_{g\in  \mathcal{O}_1} 2^{c(g)-1}\ell(g) . \]
    This concludes the proof.
\end{proof}

Given two functions $f,g: \NN \to \mathbb{R}^+$, we write $f \sim g$ if the limit of the quotient $f(n)/g(n)$, as $n$ goes to infinity, is $1$.
\begin{lemma}\label{lemma:MarusaRevenge}
    Suppose that $G$ is a permutation group of degree $n$ such that $\m(G) = \lceil (n+1)/2 \rceil$. Then, if $n$ is even,
    \[ |G| \geq \frac{1}{2^{n/2}} {n \choose n/2} \sim \frac{1}{\sqrt{2\pi}} \cdot 2^{\frac{1}{2} (n - \log_2n)} ,\]
    while, if $n$ is odd,
    \[ |G| \geq \frac{1}{2^{\frac{n-1}{2}}n} {n \choose \frac{n-1}{2}} \sim \sqrt{\frac{2}{\pi}} \cdot 2^{\frac{1}{2} (n - 3\log_2 n)} .\]
\end{lemma}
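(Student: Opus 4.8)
The plan is to read off both displayed inequalities as short combinatorial consequences of \cref{lemma:PrimozRevenge}, and then to pass to the asymptotics via Stirling's approximation of the central binomial coefficient. The common first step, for both parities of $n$, is the trivial lower bound $\sum_{A\in\binom{\Omega}{\lfloor n/2\rfloor}}|G_A|\ge\binom{n}{\lfloor n/2\rfloor}$, which holds because the sum has $\binom{n}{\lfloor n/2\rfloor}$ summands and each $|G_A|\ge 1$. The remaining work is to bound the right-hand sides appearing in \cref{lemma:PrimozRevenge} from above by $|G|$ times an explicit factor.

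For $n$ even: if $g\in\mathcal{O}_0$ then by definition $g$ has no odd cycle, hence in particular no fixed point, so each of its $c(g)$ cycles has length at least $2$ and therefore $c(g)\le n/2$. (Note in passing that the identity does not lie in $\mathcal{O}_0$, as it consists of $n$ cycles of length $1$.) Consequently $\sum_{g\in\mathcal{O}_0}2^{c(g)}\le|\mathcal{O}_0|\,2^{n/2}\le|G|\,2^{n/2}$, and combining this with $\binom{n}{n/2}\le\sum_A|G_A|=\sum_{g\in\mathcal{O}_0}2^{c(g)}$ from \cref{lemma:PrimozRevenge} gives $|G|\ge 2^{-n/2}\binom{n}{n/2}$. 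The stated asymptotic equivalence then follows by feeding Stirling's estimate for $\binom{n}{n/2}$ into this bound and rewriting $2^{-\frac12\log_2 n}=n^{-1/2}$.

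For $n$ odd: a permutation $g$ admitting a $\lfloor n/2\rfloor$-subset disjoint from its $g$-image has an odd number of odd cycles, and, by the cycle-counting argument in the proof of \cref{lemma:PrimozRevenge}, it must in fact have exactly one, of length $\ell(g)$, together with $c(g)-1$ cycles of even length. From $\ell(g)+2(c(g)-1)\le n$ and $1\le\ell(g)\le n$ we obtain $2^{c(g)-1}\ell(g)\le n\,2^{(n-\ell(g))/2}\le n\,2^{(n-1)/2}=n\,2^{\lfloor n/2\rfloor}$, hence $\sum_{g\in\mathcal{O}_1}2^{c(g)-1}\ell(g)\le|G|\,n\,2^{\lfloor n/2\rfloor}$. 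Substituting this into $\binom{n}{\lfloor n/2\rfloor}\le\sum_A|G_A|\le\sum_{g\in\mathcal{O}_1}2^{c(g)-1}\ell(g)$ from \cref{lemma:PrimozRevenge} yields $|G|\ge\binom{n}{\lfloor n/2\rfloor}\big/(2^{\lfloor n/2\rfloor}n)$, and once more Stirling's formula for $\binom{n}{\lfloor n/2\rfloor}$ converts this into the claimed asymptotic form, the extra factor $n^{-1}$ being absorbed through $2^{-\frac32\log_2 n}=n^{-3/2}$.

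I do not anticipate a genuine obstacle here: the substance is already contained in \cref{lemma:PrimozRevenge}, and what remains is elementary bookkeeping with cycle lengths — the bound $c(g)\le n/2$ for $g\in\mathcal{O}_0$ and the joint length inequality $\ell(g)+2(c(g)-1)\le n$ for $g\in\mathcal{O}_1$ — together with a routine application of Stirling's approximation. The only points demanding a little care are checking, in the odd case, that the crude replacements $\ell(g)\mapsto n$ in the factor and $\ell(g)\mapsto 1$ in the exponent are applied consistently so as to produce precisely the exponent $\lfloor n/2\rfloor=(n-1)/2$, and tracking the multiplicative constant in Stirling's formula carefully enough to recover the exact leading terms displayed in the statement.
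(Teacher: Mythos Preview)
Your proposal is correct and follows essentially the same approach as the paper: derive both bounds from \cref{lemma:PrimozRevenge} by using $|G_A|\ge 1$ on one side and the cycle-length bounds $c(g)\le n/2$ (respectively $c(g)-1\le\lfloor n/2\rfloor$, $\ell(g)\le n$) on the other, then invoke Stirling. Your write-up is in fact more explicit than the paper's one-sentence proof, particularly in justifying the inequality $2^{c(g)-1}\ell(g)\le n\,2^{\lfloor n/2\rfloor}$ via $\ell(g)+2(c(g)-1)\le n$.
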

\begin{proof}
    These bounds are obtained by disregarding the exclusion of the elements from $\mathcal{O}_i$, by substituting in the formulae from \cref{lemma:PrimozRevenge} the lower bound $1$ for $|G_A|$ and the upper bound $2^{n/2}$ for $2^{c(g)}$ and $n$ for $\ell(g)$, and by applying Stirling's formula to derive the asymptotic expansion.
\end{proof}

We are ready to characterise which primitive permutation groups
meet the putative upper bound.

\begin{proof}[Proof of \cref{lemma:complementB}]
%Since $G$ is a primitive, we can make use of \cref{lemma:MarusaRevenge} through the extensive literature regarding the order of primitive permutation groups. 
By \cite[Theorem~1.1]{Maroti2002}, a primitive group whose degree exceeds $24$ is either a subgroup of $\Sym(a) \wr \Sym(b)$ with socle isomorphic to $\Alt(a)^b$, where the action of $\Sym(a)$ is on $k$-subsets (for some $1\le k \le a/2)$, the action of $\Sym(b)$ in on $b$ points, and the wreath product is endowed with the product action, or
\begin{equation}\label{eq:Maroti}
    |G| \leq n^{1 + \log_2n}.
\end{equation}
By elementary computations and by using the improved error term for Stirling formula proved in \cite{Robbins1995}, we show that, for even integers $n\le 300$,
\[ \frac{1}{2^{n/2}} {n \choose n/2} \ge n^{1 + \log_2n} ,\]
and that, for odd integers $n\le 300$,
\[\frac{1}{2^{\frac{n-1}{2}}n} {n \choose \frac{n-1}{2}} \ge n^{1 + \log_2n}.\]
Moreover, by checking a finite number of cases with a calculator, we can decrease these bounds even further. Indeed, the function appearing in \cref{lemma:MarusaRevenge} is greater that the bound from \cref{eq:Maroti} whenever $n$ is an even integer greater or equal to $116$, or $n$ is an odd integer greater or equal to $147$. 

We have two cases to consider. If the degree of the permutation group $G$ exceeds $116$ or $147$, depending on the parity of its degree, by \cref{thm:productAction}, for every $a \geq 4$ and $b\geq 2$,
\[ \m(\Sym(a) \wr \Sym(b)) \leq \left\lceil \frac{a+1}{2} \right\rceil^b < \left\lceil \frac{a^b+1}{2} \right\rceil.\]
In particular, if the degree exceeds $16$, we can assume that  $G$ is of almost simple type. Moreover, \cref{thm:ksets} implies that $G$ is permutation isomorphic to either $\Alt(n)$ or $\Sym(n)$ endowed with their actions on $n$ points.

Let us now deal with the groups of small degree, where we can use a computational approach. We started with a list of all primitive permutation groups of degree at most $146$ in the database of primitive groups in {\sc Magma}, where we excluded the symmetric groups and the alternating groups in their natural actions from consideration. (This database is based on \cite{CouttsQuickRD11,RD05,RDunger03,Sims70}.) Each of these primitive groups of even degree was tested against the inequality 
\[{n \choose \frac{n}{2}} \le \sum_{g\in \mathcal{O}_0} 2^{c(g)} ,\]
while those of odd degree against
\[{n \choose\frac{n-1}{2}} \le \sum_{g\in \mathcal{O}_1} 2^{c(g)-1} \ell(g) .\]
Both inequalities are obtained from \cref{lemma:PrimozRevenge}, by substituting to each $|G_A|$ a $1$ in the sum on the right hand side of the expressions.

Our search list get significantly shortened: the possible groups have degree at most $32$, with the only candidate of degree $32$ being ${\rm AGL}(5,2)$. We use a different approach for it. By randomly sampling subsets of size at most $16$ of the domain $\FF_2^5$ and then testing them for existence of a group element that maps them to their complement, we found a number of sets of size $15$ that are not self-separable, thus showing that $\m({\rm AGL}(5,2)) \le 15$ (and we believe that $15$ is the exact value). The remaining groups all have degree $24$ or less, with the only group of order $24$ being the Mathieu group $M_{24}$. These groups were dealt with an exhaustive search of $\lfloor(n+1)/2\rfloor$-subsets which are not self-separable. The groups where such a set was found were excluded form the list, and those that survived are precisely the ones listed in \cref{lemma:complementB}.
Our code for these computations can be found in \cite{ourAlgorithm}. This concludes the proof.
\end{proof}

\subsection{Imprimitive groups meeting the upper bound} 
\label{sub:upperImprimitive}

Let us now move to imprimitive permutation groups $G$ meeting the upper bound on the parameter $\m(G)$ from \cref{thm:upperBound}.
A complete characterisation of
these groups is given in \cref{lemma:caseA,lemma:caseB},
which together with
its primitive counterpart (Lemma~\ref{lemma:complementB}) finishes the proof of \cref{thm:upperBound}.

By \cref{thm:largeBlocks}, the degree $n$ of an imprimitive permutation group $G$ with $\m(G) = \lceil  (n+1)/2 \rceil$
is even, and $G$ admits either a system of $n/2$ blocks of size $2$ or two blocks of size $n/2$. We shall consider these two
cases in separately.
We start our analysis with the following lemma of independent interest, which gives a necessary condition for a permutation group to be $k$-homogeneous.

\begin{lemma}
\label{lem:swing}
    Let $G$ be a transitive permutation group acting on a set $\Omega$ of size $n$, and let $k \le \lceil n/2\rceil - 1$.
    Suppose that, for every two disjoint subsets $A,B \subseteq \Omega$ of size $k$, there exists $g\in G$ mapping $A$ to $B$.
    Then, $G$ is $k$-homogeneous.
\end{lemma}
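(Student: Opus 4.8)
The goal is to show that $G$ acts transitively on the set $\binom{\Omega}{k}$ of all $k$-subsets of $\Omega$, i.e.\ that it is $k$-homogeneous. The plan is to reduce the general case to the very special situation of two $k$-subsets that differ in a single point, and then to settle that situation by inserting one auxiliary $k$-subset disjoint from both and invoking the disjoint-mapping hypothesis twice. Throughout, the only arithmetic fact I need is that $k \le \lceil n/2\rceil - 1$ forces $2k+1 \le n$ (check the two parities of $n$ separately); this is precisely what guarantees that, whenever $A,C\in\binom{\Omega}{k}$ with $|A\cup C|\le k+1$, the complement $\Omega\setminus(A\cup C)$ still contains a $k$-subset.

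First I would prove the key reduced claim: \emph{if $A,C\in\binom{\Omega}{k}$ satisfy $|A\cap C|=k-1$, then $A$ and $C$ lie in the same $G$-orbit.} Here $|A\cup C|=k+1$, so by the inequality above there is a $k$-subset $B\subseteq\Omega\setminus(A\cup C)$. Since $B$ is disjoint from $A$ and from $C$, the hypothesis supplies $g_1,g_2\in G$ with $A^{g_1}=B$ and $B^{g_2}=C$, and then $g_1g_2\in G$ maps $A$ to $C$.

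Next I would deduce the lemma from the claim by a connectivity argument in the Johnson graph. Given arbitrary $A,C\in\binom{\Omega}{k}$ with $A\neq C$, pick $a\in A\setminus C$ and $c\in C\setminus A$ (both nonempty), and replace $A$ by $A'=(A\setminus\{a\})\cup\{c\}$; then $|A\cap A'|=k-1$ while $|A'\cap C|=|A\cap C|+1$. Iterating yields a finite chain $A=A_0,A_1,\dots,A_m=C$ of $k$-subsets in which consecutive members differ in exactly one point. Since ``lying in the same $G$-orbit'' is an equivalence relation and, by the claim, relates each consecutive pair $A_i,A_{i+1}$, it relates $A$ and $C$. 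Hence $G$ is transitive on $\binom{\Omega}{k}$.

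I do not expect a serious obstacle; the only point that needs a moment's care is recognising that one cannot in general find a single $k$-subset disjoint from both $A$ and $C$ when $|A\cap C|$ is small — that would require roughly $3k$ points of room, which the hypothesis $2k+1\le n$ does not provide — so the reduction to single-point exchanges, where only $2k+1\le n$ room is needed, is what makes the argument work. (Transitivity of $G$ is not actually used in this proof, though it is harmless to assume.)
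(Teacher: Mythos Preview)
Your proof is correct and takes a genuinely different route from the paper's. The paper argues by contradiction: it picks a bad pair $(A,B)$ minimising $|A\cap B|=t$, builds an auxiliary set $A'=(B\setminus A)\cup C$ disjoint from $A$, maps $A$ to $A'$, and then splits into cases according to whether $n-k-t\ge k$ to construct a further set $A''$ with $|A''\cap B|<t$, contradicting minimality. Your argument instead reduces to adjacent pairs in the Johnson graph, where $|A\cup C|=k+1$ is small enough that the bound $2k+1\le n$ immediately yields a common disjoint $k$-subset, and then invokes the hypothesis twice; connectivity of the Johnson graph finishes the job. Your approach is shorter and avoids the case split entirely; the paper's approach is more hands-on but ultimately proves the same thing with more work. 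Your parenthetical observation that transitivity of $G$ is never used is also correct and worth noting.
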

\begin{proof}
   Aiming for a contradiction, suppose that the statement is false. Then the set
   $$\cE = \{(A,B) \mid A,B \subseteq \Omega, |A| = |B| = k, A^g \neq B  \text{ for every } g \in G\}$$
   is nonempty.  Among all $(A,B) \in \cE$, we choose one which minimises $|A \cap B|$. Observe that, by our assumptions, the size $t$ of the intersection $A\cap B$ satisfies $1\le t \le k-1$. Thus,
   $$
   |\Omega \setminus (A\cup B)|  = n - 2k+t \ge t+1.$$
   This allows us to choose a subset $C\subseteq \Omega \setminus (A\cup B)$ with $|C|=t$. Now, let
   $$A' = (B\setminus A) \cup C ,$$
   and observe that $A\cap A'$ is empty.
   Hence, by the assumptions, there exists $g\in G$ such that $A^g = A'$.
   
   Observe that the sets $A\setminus B$ and
   $\Omega\setminus(A\cup B \cup C)$ are 
   disjoint and of cardinalities
   $k-t$ and $n-2k$, respectively. Their union is, therefore, of size $n-k-t$. If $n-k-t \ge k$, then let $A''$ be an arbitrary $k$-subset of this union. Note that such a set $A''$ is disjoint from $A'$, implying that there exist $g' \in G$ such that
   $A'^{g'} = A''$. Observe also that $B\cap A'' = \emptyset $, implying that there exists $h\in G$ such that $A''^h=B$.
   But then $A^{gg'h} = B$, contradicting the fact that $(A,B) \in \cE$. This contradiction implies that $n-k-t < k$,
or equivalently, $2k+t-n = k -(n-k -t) > 0$.
  Since $2k \le n-1$, it follows that $2k+t-n \le t-1$, allowing us to choose a $(2k-n+t)$-subset $D$ of $A\cap B$. Now let
   \[
   A'' = (A \setminus B) \cup (\Omega \setminus (A\cup B \cup C)) \cup D,
   \]
   and observe that $|A''| = k$ and $A'' \cap A' = \emptyset$.
   Similarly as above, there exist $g' \in G$ such that
   $A'^{g'} = A''$. Observe also that $|B\cap A''| = |D| < t$, implying that there exists $h\in G$ such that $A''^h=B$, which contradicts
   the fact that $(A,B) \in \cE$.

   Hence $\cE$ is empty, and the statement is proven.
   %Note that we can choose such a set $D$ since the sets $A\cap B$ and $(A \setminus B) \cup (\Omega \setminus (A\cup B \cup C))$ have empty intersection, 
   %$|\Omega \setminus (A \cup B \cup C))| \ge 1$ by \cref{eq:swing_t},
   %and
   %\[ |D| \le k - |(A \setminus B) \cup (\Omega \setminus (A\cup B \cup C))| \le k- (k - t + 1) \le t - 1 = |A \cap B| -1. \] This also shows that $|A'' \cap B| < t$.
    %Observe that $A'' \cap A' = \emptyset$, hence, by assumptions, there exists $g'\in G$ such that $A'^{g'} = A''$. By the assumption on the minimality of the intersection of $A$ and $B$ among all pairs such that one set cannot be mapped into the other by an element of $G$, it now follows that there exists $g''$ such that $A''^{g''} = B$, hence $g g'g''$ maps $A$ to $B$, and we have a contradiction.
\end{proof}
\begin{remark}
    Note that the upper bound on $k$ in the assumptions of the above lemma cannot be improved to
    $\lceil n/2 \rceil$
    since that would imply that
    every transitive permutation group $G$ of even degree $n$ and with $\m(G) = n/2$ is $k$-homogeneous. However, as we show later in this section, this is far from true.
\end{remark}

\subsubsection{Permutation groups admitting blocks of size $2$}
We shall now turn our attention to transitive permutation groups $G$ that admit a system of imprimitivity $\Sigma$ consisting of blocks of size $2$. A natural approach to analyse such groups is by considering the permutation module over the field or order $2$ of the permutation group $G^\Sigma$, induced by the action of $G$ on $\Sigma$. While a complete classification of permutation groups with blocks of size $2$ is beyond the scope of this paper, we shall pursue this strategy to the point that is needed to prove
Lemma~\ref{lemma:caseA} and leave a more thorough analysis for the future.

Suppose that $G$ is a transitive permutation group on a set $\Omega$, $|\Omega| =2m$, admitting a system of imprimitivity $\Sigma$ with $m$ blocks of size $2$. Let $H=G^\Sigma$ denote the permutation group induced by the action of $G$ on $\Sigma$. Then $G$ is permutation isomorphic to a subgroup of $C_2 \wr H$ in its imprimitive action on $\{0,1\} \times \Sigma$.
Let us make this more explicit. 

Let $\FF_2^\Sigma$ be the vector space of all functions from $\Sigma$ to $\FF_2$, endowed with the point-wise addition.
Further, for $\chi \in \FF_2^\Sigma$ and $g\in H$, let $\chi^g \in \FF_2^\Sigma$ be defined by $\chi^g(B) = \chi(B^{g^{-1}})$ for every $B \in \Sigma$. This defines a linear representation $H \to {\rm GL}(\FF_2^\Sigma)$ and thus
turns $\FF_2^\Sigma$ into an $\FF_2H$-module, called the {\em permutation module of $H$}.

Further, for $B\in \Sigma$, define $\tau_B$ to be the permutation on $\Omega$ swapping the two elements in  $B$ and fixing everything else. It is convenient to identify $\tau_B$ with the characteristic function $\chi_B:\Sigma \rightarrow \FF_2$ of the element $B\in \Sigma$. Such an identification clearly yields an isomorphism
between the group $N:= \langle \tau_B \mid B \in \Sigma \rangle$ and the additive group $\FF_2^\Sigma$.
Moreover, under this identification, the conjugate $(\tau_B)^g = \tau_{B^g}$ of 
$\tau_B$ by $g\in G$ corresponds to the function $(\chi_B)^{\bar{g}}$, where $\chi_B$ is the characteristic
function of $B$ representing $\tau_B$ and $\bar{g}$ is the permutation induced by $g$ on $\Sigma$.
This shows that we may think of $N$ not only as the vector space $\FF_2^\Sigma$ but also as the $\FF_2H$-permutation
module (with the multiplication by $h\in H \le \FF_2H$ given via conjugation by an element $g\in G$, whose induced action on $\Sigma$ is $h$).

Let us now turn our attention to the kernel $K$ of the action of $G$ on $\Sigma$. Note that $K\le N$, and hence $K = G \cap N$. Since $K$ is normal in $G$, the above identification of $N$ with $\FF_2^\Sigma$ turns $K$ into the $\FF_2H$-submodule of the permutation module $N \cong \FF_2^\Sigma$. Understanding imprimitive groups with blocks of size $2$ is thus reduced to the study of submodules of the permutation modules these groups induce on the block system, together with the question of determining different (not permutation isomorphic) extensions of these modules by the induced permutation groups on the block system.

Note that the $\FF_2H$-permutation module always contains the trivial submodule, the one dimensional {\em trace module} $\mathsf{T}$ spanned by $\Pi_{B \in \Sigma} \tau_B$, and the {\em augmentation module} $\A$ of codimension $1$ consisting of all products of an even number of $\tau_B$'s. While other submodules might exist in general, 
 this will not be the case for most of the groups $H$ appearing in our analysis.

We shall now present a series of lemmas and constructions that determine some transitive permutation groups (up to isomorphism) admitting blocks of size $2$ that appear in our analysis. 
We begin with the case in which $K$ is trivial.

\begin{lemma}
 \label{lem:trivialK}
    Let $G$ be a transitive permutation group on  $\Omega$ admitting a system of imprimitivity $\Sigma$ consisting of $m$ blocks of size $2$. Let $B \in \Sigma$ and let $G_B$ be the setwise stabiliser of $B$ in $G$. 
    Suppose that the kernel of the action of $G$ on $\Sigma$ is trivial.
    Then, there exists an index $2$ subgroup $S$ of $G_B$, with trivial core in $G$, 
    and $G$ is permutation isomorphic to the action of $G$ on the coset space $G/S$ by right multiplication.
\end{lemma}

\begin{proof}
    Let $\omega \in \Omega$ and let $B \in \Sigma$ be such that $\omega \in B$. Take $S$ to be the vertex-stabiliser $G_\omega$ and observe that $S$ has index $2$ in $G_B$ and is core-free in $G$. Furthermore, since $S = G_\omega$, $G$ is permutation isomorphic to the action of $G$ on the coset space $G/S$ by right multiplication.
\end{proof}

\begin{construction}
\label{rem:cos}
In the case when $G= \Sym(m)$ acting on the cosets of the subgroup $S=\Alt(m-1)$,
the permutation action of $G$ on the coset space $G/S$ has a more combinatorial description. 
Consider the set $\Omega:=\{1,\ldots,m\} \times \{0,1\}$. For $g\in G$, let $\bar{g}$ be the permutation of $\Omega$ defined by $(\sigma,i)^{\bar{g}} = (\sigma^g, i)$ for each $(\sigma,i) \in \Omega$,
and let $\tau$ be the permutation acting as $(\sigma,i)^\tau=(\sigma,1-i)$. 
Now consider the permutation group 
$$\mix(\Sym(m),\Alt(m),\one) \> := \> \{\bar{g} : g \in \Alt(m)\}\> \cup\> \{\tau\bar{g} : g \in \Sym(m)\setminus \Alt(m)\}.$$
Observe first that the function $\iota \colon \Sym(m) \to \mix(\Sym(m),\Alt(m),\one)$ mapping $g$ to $\bar{g}$ if
$g\in \Alt(m)$ and to $\tau\bar{g}$ if $g\in \Sym(m)\setminus \Alt(m)$ is an isomorphism of (abstract) groups,
mapping the subgroup $\Alt(m-1)\le \Sym(m)$ to the stabiliser of the point $(m,0) \in \Omega$ in $\mix(\Sym(m),\Alt(m),\one)$. This implies
that $\mix(\Sym(m),\Alt(m),\one)$ is permutation isomorphic to the group $\Sym(m)$ acting on the cosets of $\Alt(m-1)$.
\end{construction}

%\begin{remark} \label{rem:K=1}
Lemma~\ref{lem:trivialK} can also be used in the case where the kernel $K$ of the action of $G$ on $\Sigma$ is non-trivial and possesses
a complement $\overline{H} \cong G^\Sigma$ which is transitive on $\Omega$ (the situation where the complement is intransitive is considered in Lemma~\ref{lem:semidir}). Namely, one can then let $\overline{H}$ take the place of $G$ in the lemma, and conclude that the block-stabiliser $\overline{H}_B$ has a core-free index $2$ subgroup $S$ with $\overline{H}$ acting on $\Omega$ permutation isomorphic to the coset action on $\overline{H}/S$. This will be evident in \cref{constr:mix}. 
%\end{remark}

We now consider the case in which $K$ is nontrivial and it admits an intransitive complement in $G$.

\begin{construction} \label{constr:semidirect}
    Let $H$ be a transitive permutation group with domain $\Sigma$, Let $K$ be a nontrivial submodule of the permutation $\FF_2H$-module $\FF_2^\Sigma$, and let $\Omega = \Sigma \times \{0,1\}$. For each $h \in H$ let $\overline{h}$ be the permutation on $\Omega$ defined by $(\sigma, i) \mapsto (\sigma^h, i)$ and let $\overline{H} = \{\overline{h} \mid h \in H\}$. Further, for each $\chi \in K$, let $\overline{\chi}$ be the permutation of $\Omega$ which swaps the two elements in $\{ \sigma \} \times \{0,1\}$ when $\chi(\sigma) = 1$, and fixes them when $\chi(\sigma) = 0$. Finally, let $\overline{K} = \{\overline{\chi} \mid \chi \in K\}$. Observe that $K \cong \overline{K}$, $H \cong \overline{H}$, $\overline{K} \cap \overline{H} = 1$, and that $\overline{K}$ normalises $\overline{H}$.
    This allows us to define the permutation group $\overline{K} \rtimes \overline{H}$ on $\Omega$, which we shall denote simply as $K \rtimes H$, and when $K=\mathsf{T}\cong C_2$ is the trace submodule,
    we shall also write $C_2\times H$ instead. Note that, since $K$ is nontrivial, $K \rtimes H$ is transitive, with a system of imprimitivity $\overline{\Sigma} = \{\{\sigma\} \times \{0,1\} \mid \sigma \in \Sigma\}$. Moreover, the kernel of the action of $G$ on $\overline{\Sigma}$ is $\overline{K} \cong K$ and the induced permutation group on $\overline{\Sigma}$ is permutation isomorphic to $H$.
\end{construction} 

\begin{lemma}
     \label{lem:semidir}
     Let $G$ be a transitive permutation group on $\Omega$ admitting a system of imprimitivity $\Sigma$ consisting of $m$ blocks of size $2$, let $H$ be the permutation group induced by $G$ on $\Sigma$ and let $K$ be the kernel of the action of $G$ on $\Sigma$ (viewed as a submodule of the permutation module $\FF_2^\Sigma$). Suppose that $K$ is nontrivial and it has a complement in $G$ which is intransitive on $\Omega$. Then $G$ is isomorphic to $H \rtimes K$, as defined in \cref{constr:semidirect}.
\end{lemma}

\begin{proof}
    Let $\overline{H}$ be an intransitive complement of $K$ in $G$. Since $K$ acts trivially on $\Sigma$, $\overline{H}$ acts transitively, and since it is intransitive on $\Omega$, it intersect each block in $\Sigma$ in exactly one point. Without loss of generality we can label the points in a block $\sigma \in \Sigma$ by $(\sigma ,0)$ and $(\sigma, 1)$ in such a way that the two orbits of $H$ are $\Sigma \times \{0\}$ and $\Sigma \times \{1\}$. It is straightforward to verify that under this labelling, $\overline{H}$ is precisely the corresponding permutation group $\overline{H}$ from \cref{constr:semidirect}, and that $K$ acts on $\Sigma$ in the same way as $\overline{K}$. In particular, this labelling yields a permutation isomorphism between $G$ and $H \rtimes K$.
\end{proof}

The following construction and lemma deal with the situation where the kernel $K$ is the augmentation ideal.

\begin{construction} \label{constr:mix}
     Let $H$ be a transitive permutation group on the domain $\Sigma$, let $S$ be an index $2$ normal subgroup of $H$, and let $\A$ be the augmentation submodule of $\FF_2^\Sigma$. Consider the group $\overline{\A} \rtimes \overline{H}$, acting on $\Omega = \Sigma \times \{0,1\}$, as explained in \cref{constr:semidirect}, and observe that it contains $\overline{\A}  \rtimes \overline{S}$ as an index $2$ subgroup. Choose $\tau$ in $\FF_2^\Sigma \setminus \A$ and $h \in H \setminus{S}$, and let $G =\langle  \overline{\A}  \rtimes \overline{S}, \tau \overline{h} \rangle$.
     Observe that $\overline{\A}  \rtimes \overline{S}$ has index $2$ in $G$, that $G$ does not depend on the choice of $\tau$ and $h$, and that it is a transitive permutation group on $\Omega$ with a system of imprimitivity 
     $\overline{\Sigma} = \{\{\sigma\} \times \{0,1\} \mid \sigma \in \Sigma\}$. Moreover, the kernel of the action of 
     $G$ on $\overline{\Sigma}$ is $\overline{A}$ and
     $G^{\overline{\Sigma}}$ is permutation isomorphic to $H$. We shall denote the group $G$ as $\mix(H,S,\A)$.
     \end{construction}

\begin{lemma}
 \label{lem:A}
    Let $G$ be a transitive permutation group on $\Omega$ admitting a system of imprimitivity $\Sigma$ consisting of $m$ blocks of size $2$. Suppose that the kernel of the action of $G$ on $\Sigma$ is the augmentation ideal $\A$. Let $H=G^\Sigma$ be the permutation group induced by the action of $G$ on $\Sigma$. 
    Then, either $G$ is isomorphic to $\A \rtimes H$, or $H$ has an index $2$ subgroup $S$ and $G$ is isomorphic to $\mix(H,S,\A)$, as defined in \cref{constr:mix}.
\end{lemma}

\begin{proof} 
For each $B \in \Sigma$, choose one point from $B$ and label it by $(B,0)$, and label the other point with $(B,1)$.
This labelling establishes a bijection between $\Omega$ and $ \Sigma \times \{0,1\}$, allowing us
to assume that $\Omega = \Sigma \times \{0,1\}$, with the block system consisting of \emph{fibres} $\{\sigma\} \times \{0,1\}$,
for $\sigma\in \Sigma$.
Let $\overline{H}$ and $\overline{\A}$ be as in \cref{constr:mix}. Observe first that $G$ is an index $2$ subgroup of $\FF_2^\Sigma \rtimes \overline{H}$. Observe that $\overline{\A} \rtimes \overline{H}$ has the prescribed properties, and hence $G=\overline{\A} \rtimes \overline{H}$ gives the first possibility of the statement.
 
We can thus assume that $X := G \cap (\overline{\A} \rtimes \overline{H})$ is a normal subgroup of $\FF_2^\Sigma \rtimes \overline{H}$ of index $4$.
 Since $G$ contains $\A$, we also see that $X = \overline{\A}\rtimes (G \cap \overline{H})$. 
 This implies that $\overline{S} := G \cap \overline{H}$ has index $2$ in $G$.
 Let $S = \overline{S}^\Sigma$, that is, the subgroup of $H$ containing all those $h\in H$ for which $\overline{h} \in \overline{S}$.
 Then clearly $S$ has index $2$ in $H$, and we may apply Construction~\ref{constr:mix} to obtain the group $\mix(H,S,\A)$.
 
 Since $X$ is a normal subgroup of index $4$ in $\FF_2^\Sigma \rtimes \overline{H}$,
the open interval of subgroups between $X$ and $\FF_2^\Sigma \rtimes \overline{H}$ consists of either one or three groups
(depending on whether the quotient by $X$ is cyclic or the Klein group). However, we see that this interval contains
three distinct groups, namely
 $\overline{\A} \rtimes \overline{H}$,
 $\FF_2^\Sigma \rtimes S$, 
 and $\mix(H,S,\A)$. It is straightforward to check that precisely $\A \rtimes H$ and $\mix(H,S,\A)$ are such that the group $G^\Sigma$ is $H$ and that $K = \A$.
 Hence, $G$ is permutation isomorphic to one of these two groups.
 \end{proof}
 
\begin{lemma}
\label{lemma:caseA}
    Let $m \ge 2$ 
    be a positive integer, and let $G$ be an imprimitive group with block system $\Sigma$ that consists of $m$ blocks of size $2$. 
    Then $\m(G) = m+1$ if and only if $G$ is permutation isomorphic to one of the following:
    \begin{enumerate}[$(i)$]
        \item $C_2\times C_2$, $C_4$ or $\Dih(4)$ acting transitively on $4$ points;
        \item $\mix(\Sym(m),\Alt(m),\one), $where $m\ge 3$ (see Remark~\ref{rem:cos});
        \item $\mathsf{T}\rtimes H \cong  C_2 \times H$ (see Construction~\ref{constr:semidirect}), where $m$ and $H$ satisfy one of:
          \begin{itemize}
             \item $m\ge 3$ and $H=\Sym(m)$;
             \item $m\ge 4$ and $H=\Alt(m)$; 
             \item $m=4$ and $H=C_2\times C_2$ or $\Dih(4)$;
             \item $m=6$ and $H=\PGL_2(5)$.
          \end{itemize} 
        \item $\A\rtimes H \cong  C_2^{m-1} \rtimes H$ (see Construction~\ref{constr:semidirect}), where $m$ and $H$ satisfy one of:
          \begin{itemize}
             \item $m\ge 4$ is even and $H=\Alt(m)$ or $\Sym(m)$; 
             \item $m=4$ and $H=C_2\times C_2$  or $\Dih(4)$;
             \item $m=6$ and $H=\PGL_2(5)$;
             \item $m=8$ and $H=\PSL_2(7)$, $\PGL_2(7)$, or $\ASL_3(2)$;
             \item $m=12$ and $H=M_{12}$. 
          \end{itemize}
        \item $\mix(H,S,\A)$ (see Construction~\ref{constr:mix}), where $m$, $H$ and $S$ are one of:
          \begin{itemize}
             \item $m\ge 3$, $H=\Sym(m)$ and $S=\Alt(m)$;
             \item $m=4$ and $(H,S)=(C_2\times C_2,C_2)$, $(\Dih(4),C_2\times C_2)$, or $(\Dih(4),C_4)$;
             \item $m=5$, $H=\AGL_1(5)$ and $S=\Dih(5)$;                        
             \item $m=p+1$, $H=\PGL_2(p)$ and $S=\PSL_2(p)$, for $p\in\{5,7\}$.
          \end{itemize}
        \item $C_2 \wr H \cong C_2^m \rtimes H$ where $m$ and $H$ satisfy one of:
        \begin{itemize}
           \item $m\ge 3$ and $ H = \Sym(m)$;
           \item $m\ge 4$ and $ H = \Alt(m)$;
           \item $m=4$ and $H = C_2\times C_2$ or $\Dih(4)$;
           \item $m=5$ and $H=\AGL_1(5)$;
           \item $m=6$ and $H=\PGL_2(5)$;
           \item $m=8$ and $H=\PSL_2(7)$, $\PGL_2(7)$, or $\ASL_3(2)$;
           \item $m=9$ and $H=\PSL_2(8)$ or $\PGammaL_2(8)$;
           \item $m=12$ and $H=M_{12}$.
        \end{itemize}
       \item one of six transitive groups of degree $8$ or $16$ that
       can be found in the Library of Transitive Groups in \cite{magma} under the names {\tt TransitiveGroup}$(n,i)$ for $n=8$ and $i \in \{17,18,19\}$ and for
       $n=16$ and $i \in \{1506, 1804, 1805\}$.
    \end{enumerate}
\end{lemma} 

\begin{remark}
   Let us provide some additional information on the six exceptional groups appearing in the last item of the lemma. For each of this groups $G$, let
       $H:=G^\Sigma$ and let $K$ be the kernel of the action of $G$ on $\Sigma$,
    viewed as a submodule of the permutation $\FF_2H$-module $\FF_2^\Sigma$.

     The three exceptional transitive groups of degree $8$ all arise from the group $H = \Dih(4)$ (in its action on 4 points) and the $2$-dimensional submodule $K \le \FF_2^\Sigma$ generated the characteristic function of a block of imprimitivity of $\Dih(4)$. The group {\tt TransitiveGroup}$(8,18)$ is then just the semidirect product $K\rtimes H \cong \FF_2^2\rtimes \Dih(4)$ (see Construction~\ref{constr:semidirect}). The other two groups of degree $8$ are both non-split extensions of $K$ by $\Dih(4)$ (that is, in both cases, $K$ has no complement in $G$). Interestingly, both of them
     contain two non-conjugate copies of $\Dih(4)$, one acting regularly on the 8 points and one intransitively, but both intersecting $K$ non-trivially. Moreover,
     all three groups {\tt TransitiveGroup}$(8,i)$, $i\in\{17,18,19\}$, also admit a system of imprimitivity with two blocks of size $4$, meaning that they will appear again later, when this situation is analysed.

     Further, two of the three exceptional groups of degree $16$ induce the group $H=\ASL_3(2)$ (in its natural affine action) with the kernel $K$ corresponding to the unique $4$-dimension submodule of the permutation $\FF_2H$-module $\FF_2^\Sigma \cong \FF_2^8$.
     One of these two groups ({\tt TransitiveGroup}$(16,1804)$, to be precise) is a semidirect product $K\rtimes H \cong \FF_2^4\rtimes\ASL_3(2)$, while {\tt TransitiveGroup}$(16,1805)$ is a non-split extension of $K$ by $\ASL_3(2)$ (and contains no subgroups isomorphic to $\ASL_3(2)$).

     Finally, the remaining exceptional group, namely {\tt TransitiveGroup}$(16,1506)$
     is simply a semidirect product $K\rtimes H$
     where $H=\PSL_2(7)$ in its natural action on $8$ points, and where $K\cong \FF_2^4$ is a cyclic $\FF_2\PSL_2(7)$-module generated by the characteristic function of the set $\{x^2 \mid x \in \FF_7\setminus\{0\}\} \cup \{\infty\}$, viewed as the subset of the projective line $\FF_7 \cup \{\infty\}$.  
\end{remark}

\begin{proof}[Proof of \cref{lemma:caseA}]
 For $m\le 9$, the validity of the lemma has been checked computationally by checking all transitive groups of degree at most $18$ from the Library of Transitive Groups in Magma \cite{magma} (the results and the code are available at \cite{ourAlgorithm}). We shall  assume henceforth that $m\ge 10$, that is, that the degree of $G$ is at least $20$.

\smallskip
\noindent \textsc{Let us show that all the permutation groups $G$ of degree $2m\ge 20$
listed in \cref{lemma:caseA} satisfy
$\m(G) = m+1$.}
The permutation groups $G$ that we need to consider are:
\begin{itemize}
    \item $\mix(\Sym(m),\Alt(m),\one)$;
    \item $C_2 \times \Alt(m)$; and
    \item $\A \rtimes M_{12}$.
\end{itemize}
Namely, all other groups listed in the lemma are overgroups of the above.

Suppose first that $G$ is either $\mix(\Sym(m),\Alt(m),\one)$
or $C_2\times \Alt(m)$. Then $G$ acts 
on the set $\Omega=\{1,\ldots,m\}\times \{0,1
\}$, as explained in Remark~\ref{rem:cos} or Construction~\ref{constr:semidirect}.
In both cases, for every $g\in \Alt(m)$, 
the group $G$ contains the permutation
$\bar{g}$ acting on $\Omega$ according to the rule $(x,i)^{\bar{g}} = (x^g,i)$. Moreover,
if $G=C_2\times \Alt(m)$, then $G$ contains
also the central permutation $\tau$, defined by
$(x,i)^\tau = (x,1-i)$, and if $G=\mix(\Sym(m),\Alt(m),\one)$, then
$G$ contains an element $\tau\bar{g}$ for every
$g\in \Sym(m)\setminus \Alt(m)$.

For $i\in \{1,\ldots,m\}$ let $\sigma_i = \{(i,0),(i,1)\} \subseteq \Omega$.
Further, let $A$ be a subset of $\Omega$ of size $m$, and let $X,Y,Z$ be subsets of $\{1,\ldots,m\}$ defined by
\[X =\{i \mid  \sigma_i \subseteq A\} , \quad Y =\{i \mid \sigma_i \cap A =\emptyset\} , \quad \hbox{and} \quad   Z =\{i \mid |\sigma_i \cap A| =1\} \,.\]
Note that the sets $X,Y,Z$ form a partition of
$\{1,\ldots,m\}$ and that $2|X|+Z = |A|$.
Since $|A| = m = |X|+|Y|+|Z|$,
it follows that $|X| = |Y|$.
Furthermore, for $j\in \{0,1\}$,
let $Z_j = \{i \in Z \mid (i,j) \in A\}$
and observe that $Z_0$ and $Z_1$ form
a partition of $Z$.

If $|X| \ge 1$, then there exists an odd permutation $g\in \Sym(m)\setminus \Alt(m)$
that fixes every element in $Z$ pointwise
and maps $X$ to $Y$ (note that such a permutation always exists).
In this case, the permutation $\tau\bar{g} \in \mix(\Sym(m),\Alt(m),\one)$,
maps $A$ to its complement, as required.

Moreover, if $|X|\ge 2$, then there is also
an even permutation $h\in \Alt(m)$ that
fixes $Z$ pointwise and swaps $X$ with $Y$.
This yields an element $\tau\bar{h}\in C_2\times \Alt(m)$ mapping $A$ to its complement, as required. Similarly, if $|X| = 1$,
then $|Z| = m-2 \ge 8$ allowing us to pick
two distinct elements $s,t \in Z_j$ for some $j\in \{0,1\}$.
Now, let $g$ be the transposition $(s\, t)\in \Sym(m)$ and let $h \in \Sym(m)$ be the transposition swapping $X$ and $Y$.
Then $\tau \bar{g}\bar{h}$ is an element of $C_2\times \Alt(m)$ that maps $A$ to its complement.

Finally, if 
 $X = Y = \emptyset$, then $|Z| = m \ge 3$ (we are in fact assuming $m\ge 10$),
implying that there exists $j\in \{0,1\}$ and $s,t \in Z_j$, $s\not = t$. Let $g=(i\, j) \in \Sym(m)$. If $G=C_2 \times \Alt(m)$,
then $\tau$ is an elements of $G$ that maps
$A$ to its complement. On the other hand,
if $G=\mix(\Sym(m),\Alt(m),\one)$, then
$\tau\bar{g} \in G$ will do the same.

We are now left with the case where
$G=\A \rtimes M_{12}$, acting on the set $\Omega = \{1,\ldots,12\} \times \{0,1\}$.
The proof that
every set of size $12$ is self-separable
in this case was done computationally,
but with the following short-cut that significantly
sped up the computations.
Observe first that the group $M_{12} \le \Sym(12)$ is $5$-transitive. Moreover,
that for every integer $k\le 6$ and every two
disjoint subsets $X, Y \subseteq \{1,\ldots,12\}$ there exists $g\in M_{12}$
simultaneously mapping $X$ to $Y$ and $Y$ to $X$.
Note that for $k=6$ this fact is equivalent
to saying that $M_{12}$ (in its action on $12$ points) meets the upper bound $\m(M_{12}) = 6$,
which was already established in Lemma~\ref{lemma:complementB},
while for $k\le 5$, due to $5$-transitivity of $M_{12}$, we may assume that $X=\{1,\ldots,k\}$
and that $Y$ contains elements $k+1$, $\dots$, $k+5$.

Once this is established (computationally or theoretically),
we may take a subset $A\subseteq \Omega$ of size $12$
and define the sets $\sigma_i$, $X$, $Y$ and $Z$, as well as the permutations $\tau$ and $\bar{g}$ for $g\in M_{12}$, in the same way as we did in the case of the groups $C_2\times \Alt(m)$. 
Let $g\in M_{12}$ be the permutation, existence
of which we have established computationally, that swaps the sets $X$ and $Y$.
Since $|Z| = 12 - |X| - |Y| = 12 - 2|X|$
is even in this case, there exists an element
$\tau_Z \in \A$ that swaps the two elements in $\sigma_i$
for each $i\in Z$ and fixes all other elements of $\Omega$. Then $\tau_Z\bar{g}$ is an element
of $G$ which maps $A$ to its complement.
This completes the proof that all the groups $G$ of degree $2m$
listed in Lemma~\ref{lem:A} satisfy $\m(G)=m+1$.

\smallskip
\noindent \textsc{Let us now prove that, if $\m(G) = m+1$ and $2m\ge 20$, then $G$ appears in the list in \cref{lemma:caseA}.}
 Let $K$ denote the kernel of the action of $G$ on $\Sigma$ and let $H = G^\Sigma \cong G/K$ denote the permutation group induced by this action. As explained earlier in this section, we may view $K$ as a submodule of the permutation module $\FF_2^\Sigma \cong \FF_2^m$.

We shall first prove that the permutation group $H$ (of degree $m$) is $k$-homogeneous, where $k=(\lceil m/2 \rceil - 1)\ge 4$.
By \cref{lem:swing}, it suffices to show that for any pair $X, Y$ of disjoint subsets of the block system $\Sigma$ of size $k$ there exists $g \in G$ such that $X^g =Y$.
Let $X,Y \subseteq \Sigma$ be two such sets, and
let $A$ be a set that contains both points from each block in $Y$, no points from any block in $X$ and precisely one point from each block in $\Sigma \setminus (X \cup Y)$. Note that $|A| = m < \m(G)$, implying that $A$ is self-separable.
Hence there exists $g\in G$ which maps $A$ to its complement and thus (viewed as a permutation of $\Sigma$) maps $X$ to $Y$. Therefore, $H$ is indeed $k$-homogeneous, as claimed.
The list of $k$-homogeneous groups of degree $m$ with $k = (\lceil m/2 \rceil - 1)$ is quite short: by combining  \cite[Theorem~9.4B]{DixonMortimer} and \cite[Section~7]{Cameron1981} (and using the assumption that the degree of $G^\Sigma$ is at least $10$), we see that 

\begin{enumerate}[$(a)$]
    \item $\Alt(m) \le H \le \Sym(m)$, or
    \item $m = 12$ and $H \cong M_{12}$.
\end{enumerate}

As was proved in \cite{Mortimer1980}, the only submodules of the permutational module $\FF_2^\Sigma$ of $H$
are then either the trivial module $\mathbf{1}$, the trace module $\mathsf{T}$, the augmentation module $\A$, and of course
$\FF_2^\Sigma$ itself.

Suppose first that the kernel $K$ is trivial. By Lemma~\ref{lem:trivialK}, the stabiliser of a point in $H$ contains
a subgroup $S$ of index $2$, excluding the cases $H\cong \Alt(m)$ and $H\cong M_{12}$. Hence $H=\Sym(m)$
and thus $S=\Alt(m-1)$. Lemma~\ref{lem:trivialK} then implies that $G$ is permutation isomorphic to the action of
$\Sym(m)$ on the cosets of $\Alt(m-1)$, and thus permutation isomorphic to $\mix(\Sym(m),\Alt(m),\one)$,
as explained in Remark~\ref{rem:cos}.

Suppose now that $K=\mathsf{T}$. Then $G$ is a central $2$-extension of $H$. Note also that $K=\langle \tau\rangle$ where $\tau$ swaps the two points in every block in $\Sigma$.

If the extension is split with the complement $\overline{H} \cong H$, then $G = C_2 \times \overline{H}$. Further, if $\overline{H}$ is intransitive, then by Lemma~\ref{lem:semidir}, $G = \mathsf{T} \rtimes H$, as in Construction~\ref{constr:semidirect}. A direct computation in the case of $H=M_{12}$
exhibits a non-self-separable of size $12$ (see \cite{ourAlgorithm} for such a set), showing that $\m(\mathsf{T} \rtimes M_{12}) \le 12$,
contradicting our assumptions.

On the other hand, if $\overline{H}$ is transitive, then we may apply the previous paragraph with 
$\overline{H}$ in place of
$G$ to conclude that $\overline{H} \cong \mix(\Sym(m),\Alt(m),\one)$ in its action on $\Omega':=\{1,\ldots,m\} \times \{0,1\}$. In this setting, the block system $\Sigma$ corresponds to the partition of $\Omega'$ into pairs $\{\sigma\} \times \{0,1\}$, and the generator $\tau$ of the kernel $K$ to the permutation
$\tau'$ of $\Omega'$ that swaps both elements in each of these pairs. The permutation group $G$ is thus permutation
isomorphic to the action of $\langle \overline{H},\tau'\rangle$ on $\Omega'$, which is clearly isomorphic to the
permutation group $\mathsf{T} \rtimes H$, as in Construction~\ref{constr:semidirect}. This completes the proof
in the case the extension of $K$ by $H$ is split.

Now suppose that $G$ is a non-split extension of $K=\langle \tau \rangle$ by $H$.
Let $\pi \colon G \to G^\Sigma = H$ be the corresponding epimorphism with kernel $K$.
Since the stabiliser $G_\omega$ of a point $\omega \in \Omega$ intersects $K$ trivially,
$\pi$ maps $G_\omega$ isomorphically to the stabiliser
$H_B$ of a point $B\in \Sigma$. Furthermore,
$\pi^{-1}(H_B) = G_B=\langle \tau \rangle \times G_\omega$.

Suppose first that $H=\Sym(m)$. Then (as is well known) there exist
two non-isomorphic non-split central  $2$-extensions, usually denoted by $2.\Sym(m)^+$ and $2.\Sym(m)^-$.
In both cases the central element is contained in the derived subgroup $[P,P]$ of the
the preimage $P$ of the group $\Sym(m-1)$. In our context that implies that $\tau \in [G_B,G_B]$,
which clearly contradicts the fact that $\tau$ is a direct (central) factor in $G_B$.
Similarly, if $H=\Alt(m)$, then $G$ is isomorphic to the unique non-split central  $2$-extension
of $\Alt(m)$, usually denoted by $2.\Alt(m)$. Here it can also be seen that $\tau$ resides within
the derived subgroup $[G_B,G_B]$, again yielding a contradiction.

We are thus left with the case where $H=M_{12}$. Since $M_{12}$ is a perfect simple group and has
the Schur multiplier of order $2$, there exists a unique non-split $2$-extension, namely the group
usually denoted by $2.M_{12}$. It can be checked computationally that this group has a unique
permutation representation of degree $24$ (admitting blocks of size $2$ and acting on them as $M_{12}$),
which can be found in the Database of Transitive Permutation Groups in {\sc Magma} \cite{magma} under the name
{\tt TransitiveGroup}$(24,18440)$. One can check computationally that this group admits a non-self-separable
set of size $12$, implying that $\m(G) \le 12$, contradicting our assumption on $\m(G)$ (see \cite{ourAlgorithm} for the computational details).
%\primozcomment{Marusa, can you find this set explicitly and add that into the file with computations?}
This finishes the proof in the case where $K=\mathsf{T}$.

Let us now assume that $K=\A$. If $H\cong \Alt(m)$ or $M_{12}$, then Lemma~\ref{lem:A} implies that
$G=\A \rtimes H$. 
However, if $m$ is odd, then the subset
$A:= \{1,\ldots,m\} \times \{0\}$
of the domain of $\A \rtimes H$ (see Construction~\ref{constr:semidirect}) cannot be separated from itself since $G$ does not contain
the trace submodule $\mathsf{T}$.
On the other hand, if $H\cong \Sym(m)$, then $G$ is isomorphic either to $\A \rtimes \Sym(m)$ (and as above, $m$ needs to be even)
or to $\mix(\Sym(m),\Alt(m),\A)$.

Finally, if $K=\FF_2^\Sigma$, then the single option is $G=\FF_2^\Sigma \rtimes H = C_2 \wr H$.
This concludes the proof of the lemma.
\end{proof}

This concludes our analysis of permutation groups admitting blocks of size $2$.

\subsubsection{Permutation groups admitting two blocks of size $m$.}
We start by describing some quirky permutation subgroups of $H \wr C_2$.
\begin{construction}\label{con:nonEq}
    Let $H$ be a transitive permutation group of degree $n$, and let $f\in \Sym(n)$ be a permutation such that $f^2 \in H$ (and thus $f \in \mathbf{N}_{\Sym(n)}(H)$). We denote the permutation group
    \[\left\{ (h,h) \mid h\in H \right\} \cong H ,\]
    whose action is diagonal on $\Omega^{\{0,1\}}$,  by $\qrk_0(H)$.
    Note that $\qrk_0(H)$ is intransitive with two orbits, and that its transitive constituents are permutation isomorphic to $H$. % (see \cite[Exercise~1.6.1]{DixonMortimer}).
    We define
    \[\qrk(H,f) = \langle \qrk_0(H), \bar f \rangle ,\]
    where
    \[ (\omega,0)^{\bar f} = (\omega^f, 1), \quad \hbox{and} \quad  (\omega,1)^{\bar f} = (\omega^{f},0) .\]
    Note that $\qrk(H,f)$ is transitive on $\Omega ^ {\{0,1\}}$, and that $\qrk_0(H)$ is an index $2$ subgroup of $\qrk(H,f)$. Further, observe that $\qrk(H,1) = H \times C_2$.
    %To compute the number of permutational isomorphism classes of groups of the form $\qrk(H,f)$, we can assume that the first factor $H$ and its intersection with a stabiliser are fixed, that is, we can conjugate by an element from the normaliser of the intersection of a stabiliser with $\Sym(\Omega \times \{0\})$. As this normaliser contains $\Sym(\Omega \times \{1\})$, the number of permutational isomorphism classes equals the number of permutation isomorphic but non-equivalent actions of $H$ (see \cite[Lemma~1.6B]{DixonMortimer}).
\end{construction}

\cref{con:nonEq} captures, up to permutational isomorphism, every permutation group which is an imprimitive extension of $H$ of degree $2$, as stated in the following lemma.
\begin{lemma}\label{lemma:GodWhy}
    Let $H$ be a permutation group on $\Omega$, let $s\in \Sym(\Omega^{\{0,1\}})$ be a permutation with the properties that it swaps the two sets $\Omega^{\{0\}}$ and $\Omega^{\{1\}}$, and that $s^2 \in \qrk_0(H)$ endowed with its diagonal action on $\Omega^{\{0,1\}}$. Denote by $G$ the permutation group on $\Omega^{\{0,1\}}$ generated by $\qrk_0(H)$ and $s$. Then $G$ is permutation isomorphic to $\qrk(H,f)$, for some $f \in \Sym(\Omega)$.
\end{lemma}
\begin{proof}
    Aiming for a contradiction, suppose that, for every $f \in \Sym(\Omega)$,
    \[ s \notin {\bar f}^ { {\bf N}_{\Sym(\Omega^{\{0,1\}}} \left( \qrk_0(H) \right) }.\]
    In particular, there exist two distinct permutations $a,b \in \Sym(\Omega)$ such that
    \[ (\omega,0)^s = (\omega^a,1), \quad \hbox{and} \quad (\omega,1)^s = (\omega^b,0) .\]
    From the fact that $s^2 \in \qrk_0(H)$, it follows that
    \[ s \in {\bf N}_{\Sym(\Omega^{\{0,1\}})} \left( \qrk_0(H) \right) = \langle \qrk_0\left( {\bf N}_{\Sym(\Omega)} \left( H \right) \right), \bar 1 \rangle .\]
    But no element in the normaliser has the desired form, which is a contradiction. Hence, $s$ and $\bar f$ are conjugate in ${\bf N}_{\Sym(\Omega^{\{0,1\}})} \left( \qrk_0(H) \right)$, which proves the permutational isomorphism (see \cite[Exercise 1.6.1]{DixonMortimer}).
\end{proof}

\begin{example}\label{ex:cringe}
    We specialize \cref{con:nonEq} for the alternating and symmetric groups.
    Let $m\ge 5$ be an integer. Observe that, if $f \in \Alt(m)$ is an even permutation, then $(f,f)\bar f = \bar 1$, and hence
    \[ \qrk(\Alt(m),f) = 
    \qrk(\Alt(m),1) = \Alt(m) \times C_2 .\]
    Analogously, if $f\in \Sym(m)$ is odd, then, by choosing $g\in \Alt(m)$ such that $gf = (1,2)$, we have $(g,g) \bar f = {\overline{(1,2)}}$. Therefore,
    \[ \qrk(\Alt(m),f) = 
    \qrk(\Alt(m), (1,2)) .\]
    On the other hand, the same reasoning shows that, for every permutation  $f \in \Sym(m)$,
    \[ \qrk(\Sym(m),f) = 
    \qrk(\Sym(m), 1) .\]
\end{example}

The appearance of such subgroups in our setting is justified by the following observation.
\begin{lemma}\label{lem:imprimitiveAS}
    Let $H$ be an almost simple primitive permutation group with socle $T$, and let $G$ be an imprimitive permutation group $G$ with two blocks of imprimitivity, $B$ and $C$, such that $G_B^B = H$. Then, either $T \wr C_2 \le G \le H \wr C_2$, or $G$ is permutation isomorphic to $\qrk(H,f)$, for some $f\in \Sym(\Omega)$ as defined in \cref{con:nonEq}.
\end{lemma}
\begin{proof}
    We start by computing the socle of $G$. We note that $G_{(\Sigma)} = G_B = G_C$ and that $G$ is an extension of $C_2$ by $G_{(\Sigma)}$. Let $N$ be a minimal normal subgroup of $G_{(\Sigma)}$. Consider the projections $\pi_B: G_{(\Sigma)} \to G_B^B$ and $\pi_C: G_{(\Sigma)} \to G_C^C$. Since $N$ is minimal normal, so are $N^{\pi_B}$ and $N^{\pi_C}$. Observe that, as $G$ is faithful, the two images cannot be concurrently trivial. In particular, $T$ is a subgroup of $N$, and hence the socle of $G$ is of the form $T^\ell$, for some positive integer $\ell$. As $G \le \Sym(|B|) \wr C_2$, we see that $\ell \le 2$, and hence either $\ell=1$ or $\ell=2$. On one hand, if $\ell=2$, it follows at once that
    \[ T \wr C_2 \le G \le H \wr C_2 .\]
    On the other hand, suppose that $\ell=1$. Since $G$ acts transitively on $\Sigma$, there exists a permutation $s\in G$ such that $B^s = C$ and $s^2 \in G_{(\Sigma)}$. Observe that $G_B^B$ and $G_C^C$ are permutation isomorphic: the requested bijections are $s$ for the domains and the conjugation by $s$ for the groups. By \cref{lemma:GodWhy}, $G$ is permutation isomorphic to $\qrk(H,f)$, where $f$ is a permutation with the property that $\bar f = s$.
\end{proof}

We can now state the main result of the section.
\begin{lemma}\label{lemma:caseB}
    Let $m \ge 2$ 
    be a positive integer, and let $G$ be an imprimitive group with block system $\Sigma$ that consists of two blocks of size $m$. 
    Then $\m(G) = m+1$ if and only if $G$ is permutation isomorphic to one of the following:
    \begin{enumerate}[$(i)$]
        \item $C_2\times C_2$, $C_4$ or $\Dih(4)$ acting transitively on $4$ points;
        %\item $\mix(\Sym(m),\Alt(m),\one), $where $m\ge 3$ (see Remark~\ref{rem:cos});
        \item $\qrk(H,1) \cong H \times C_2$ (see Construction~\ref{con:nonEq}), where $m$ and $H$ satisfy one of:
          \begin{itemize}
             \item $m\ge 3$ and $H=\Sym(m)$;
             \item $m\ge 4$ and $H=\Alt(m)$; 
             \item $m=4$ and $H=C_2\times C_2$ or $\Dih(4)$;
             \item $m=6$ and $H=\PGL_2(5)$;
          \end{itemize} 
        %\item $\A\rtimes H \cong  C_2^{m-1} \rtimes H$ (see Construction~\ref{constr:semidirect}), where $m$ and $H$ satisfy one of:
        %  \begin{itemize}
        %     \item $m\ge 4$ is even and $H=\Alt(m)$ or $\Sym(m)$; 
        %     \item $m=4$ and $H=C_2\times C_2$  or $\Dih(4)$;
        %     \item $m=6$ and $H=\PGL_2(5)$;
        %     \item $m=8$ and $H=\PSL_2(7)$, $\PGL_2(7)$, or $\ASL_3(2)$;
        %     \item $m=12$ and $H=M_{12}$. 
        %  \end{itemize}
        %\item $\mix(H,S,\A)$ (see Construction~\ref{constr:mix}), where $m$, $H$ and $S$ are one of:
        %  \begin{itemize}
        %     \item $m\ge 3$, $H=\Sym(m)$ and $S=\Alt(m)$;
        %     \item $m=4$ and $(H,S)=(C_2\times C_2,C_2)$, $(\Dih(4),C_2\times C_2)$, or $(\Dih(4),C_4)$;
        %     \item $m=5$, $H=\AGL_1(5)$ and $S=\Dih(5)$;                        
        %     \item $m=p+1$, $H=\PGL_2(p)$ and $S=\PSL_2(p)$, for $p\in\{5,7\}$.
        %  \end{itemize}
        \item $\qrk(\Alt(m),(1,2))$ for $m\ge 5$ (see Construction~\ref{con:nonEq});
        \item $H \wr C_2 \cong H^2 \rtimes C_2$ where $m$ and $H$ satisfy one of:
        \begin{itemize}
           \item $m\ge 3$ and $ H = \Sym(m)$;
           \item $m\ge 4$ and $ H = \Alt(m)$;
           \item $m=4$ and $H =C_4,\; C_2\times C_2$ or $\Dih(4)$;
           \item $m=5$ and $H=\AGL_1(5)$;
           \item $m=6$ and $H=\PGL_2(5)$;
           \item $m=8$ and $H=\PSL_2(7)$, $\PGL_2(7)$, $\AGL_1(8)$, $\AGammaL(1,8)$ or $\ASL_3(2)$;
           \item $m=9$ and $H=\PSL_2(8)$ or $\PGammaL_2(8)$;
           \item $m=12$ and $H=M_{12}$;
        \end{itemize}
       \item one of transitive groups of degree $6$ that
       can be found in the Library of Transitive Groups in \cite{magma} under the names {\tt TransitiveGroup}$(6,i)$ for $i \in \{9,10\}$. 
       \item one of transitive groups of degree $8$ that
       can be found in the Library of Transitive Groups in \cite{magma} under the names {\tt TransitiveGroup}$(8,i)$ for $i \in \{19,21,22,26,28,29,$ $30,31,33,34,41,45,46\}$.
       \item one of transitive groups of degree $10$ that
       can be found in the Library of Transitive Groups in \cite{magma} under the names {\tt TransitiveGroup}$(10,i)$ for $i \in \{17,19,20,27,41,$ $42\}$.
       \item one of transitive groups of degree $12$ that
       can be found in the Library of Transitive Groups in \cite{magma} under the names {\tt TransitiveGroup}$(12,i)$ for $i \in \{181,182,229,278,$ $279,297,298\}$.
       \item one of transitive groups of degree $14$ that
       can be found in the Library of Transitive Groups in \cite{magma} under the names {\tt TransitiveGroup}$(14,i)$ for $i \in \{59,60\}$.
       \item one of transitive groups of degree $16$ that
       can be found in the Library of Transitive Groups in \cite{magma} under the names {\tt TransitiveGroup}$(16,i)$ for $i \in \{1078, 1502, 1798,$ $1799, 1801, 1802, 1882, 1883, 1950, 1951\}$.
       \item one of transitive groups of degree $18$ that
       can be found in the Library of Transitive Groups in \cite{magma} under the names {\tt TransitiveGroup}$(18,i)$ for $i \in \{ 937, 938, 979,$ $980\}$.
    \end{enumerate}
\end{lemma}
\begin{remark}
    Observe that $\qrk(\Alt(m), (1,2)) \cong \mix(\Sym(m), \Alt(m), {\bf 1})$ and that  $\qrk(H,1) \cong H \times C_2$ for all $m \ge 3$. Besides these groups, the following groups appear both in \cref{lemma:caseA} and \cref{lemma:caseB}.
    \begin{itemize}
        \item For degree $4$, $C_2\times C_2$, $C_4$ or $\Dih(4)$, all acting transitively on $4$ points;
        \item For degree $8$,
        \begin{itemize}
            \item the groups {\tt TransitiveGroup}$(8,i)$ for $i \in \{21,22,26,28,29,30,31\}$, which appear in \cref{lemma:caseB} and are identified in \cref{lemma:caseA},
        \item the groups {\tt TransitiveGroup}$(8,17)$ and {\tt TransitiveGroup}$(8,18)$, which appear in \cref{lemma:caseA} and are identified in \cref{lemma:caseB}
        \item and the group {\tt TransitiveGroup}$(8,19)$, which is not indentified in either \cref{lemma:caseA} or \cref{lemma:caseB}.
        \end{itemize}
    \end{itemize}
\end{remark}

\begin{proof}
    As for \cref{lemma:caseA}, the statement has been verified computationally for $m\le 9$ (see \cite{ourAlgorithm}). Therefore, we can assume that $m\ge 10$.

    \smallskip
    \noindent \textsc{Let us show that all the permutation groups $G$ of degree $2m\ge 20$
    listed in \cref{lemma:caseB} satisfy
    $\m(G) = m+1$.} As every other group appearing is an overgroup of both $\Alt(m) \times C_2$ and $\qrk(\Alt(m), (1,2))$, we just need to consider these two cases. Observe that $\Alt(n) \times C_2$ has blocks of size two, and hence the conclusion follows from \cref{lemma:caseA}.
    
    Hence, we need to consider $\qrk(\Alt(m), (1,2))$. Let $A$ be a $k$-subset of points, for some $4 \le k \le n$. We can suppose that $A$ and $A^s$ are not disjoint. We label the points of $B$ by $\{1_B, 2_B, \dots, n_B\}$, and those of $C$ by $\{1_C, 2_C, \dots, n_C\}$. We can also assume that $1_B^s = 2_C$, $2_B^s = 1_C$, $1_C^s = 2_B$, $2_C^s = 1_B$, while, for every integer $3\le x \le n$, $x_B^s=x_C$ and $x_C^s = x_B$. We define the integers
    \[ z = |A \cap \left\lbrace 1_B,2_B,1_C,2_C \right\rbrace|, \quad z_B = |A \cap \left\lbrace 1_B,2_B \right\rbrace|, \quad z_C = |A \cap \left\lbrace 1_C,2_C \right\rbrace|.\]
    Observe that there exist two integers $a,b \le n$ such that
    \[ 4-z = |A \cap \left\lbrace a_B,b_B,a_C,b_C \right\rbrace|, \quad 2-z_B = |A \cap \left\lbrace a_B,b_B \right\rbrace|, \quad 2-z_C = |A \cap \left\lbrace a_C,b_C \right\rbrace|. \]
    It is immediate to check that there exists a double transposition $t$ such that $t \in \Alt(4)$ and
    \[ \left\lbrace 1_B,2_B,1_C,2_C,a_B,b_B,a_C,b_C \right\rbrace \cap \left\lbrace 1_B,2_B,1_C,2_C,a_B,b_B,a_C,b_C \right\rbrace^{st} = \emptyset .\]
    Finally, the pointwise stabilser of the set $\Omega - \left\lbrace 1_B,2_B,1_C,2_C,a_B,b_B,a_C,b_C \right\rbrace$ contains $\Alt(n-4)$. The choice of a suitable $h$ so that $A$ and $A^{sth}$ are disjoint is the same as in the corresponding case in \cref{lemma:caseA}.%(Note that our proof of the equality $\m(\Alt(n) \times C_2)=\m(\Sym(n)\times C_2)=n+1$ does not require $n\ge 8$, but rather the weaker $n\ge 5$: this is what forces us to take $n\ge 9$ in this proof.)

    \smallskip
    \noindent \textsc{Let us now prove that, if $\m(G) = m+1$ and $2m\ge 20$, then $G$ appears in the list in \cref{lemma:caseB}.}
    Suppose that $G$ admits a system of imprimitivity $\Sigma$ consisting of two blocks of size $m$. Let us denote these blocks by $B$ and $C$, and let $X, Y$ be two subsets of $B$ of equal size $k< m/2$.  We will show that there exists a permutation in $G$ mapping $X$ to $Y$.
    Let $g$ be an arbitrary permutation that swaps the two blocks $B$ and $C$. Consider the subset
    \[A = X \cup \left( C \setminus Y^{g^{-1}} \right) .\]
    Since $|A| = m < \m(G)$, there exists a permutation $h$ such that $A \cap A^h$ is empty. Observe that $X^h =Y^{g^{-1}}$, and hence $X^{hg} = Y$. By the arbitrariness of $X$ and $Y$, we conclude that the permutation group that $G^B_B$, induced by the action of $G_B$ on $B$, is $k$-homogeneous, for every $k < m/2$. Using again \cite[Section~7]{Cameron1981} and~\cite[Theorem~9.4B]{DixonMortimer} and that $m\ge 10$, we see that either $\Alt(n) \le G_B^B \le \Sym(n)$ or $M_{12} = G_B^B$. In the latter scenario, in view of \cref{lem:imprimitiveAS}, we have two candidate groups: namely, $M_{12} \wr C_2$ and $\qrk(M_{12},1) \cong M_{12} \times C_2$. (Note that \cref{con:nonEq} produces no other subgroup as ${\bf N}_{\Sym(12)}(M_{12}) = M_{12}$.) We have verified by a computer-aided calculation that $M_12 \wr C_2$ reaches the upper bound, while $M_{12} \times C_2$ does not. Therefore, we are left with $\Alt(n) \le G_B^B \le \Sym(n)$, and the conclusion follows by \cref{ex:cringe,lem:imprimitiveAS}. 
\end{proof}

\section{Primitive groups} \label{sec:primitive}
This section is devoted to the proofs of our results concerning primitive permutation groups. The subsections are organized according to the type of primitive group under consideration, as classified by the O'Nan–Scott Theorem. As previously mentioned, we follow the partition of primitive groups into eight classes proposed in \cite{Praeger1997}.

\subsection{Almost simple type} \label{sec:AS}
The proof of \cref{thm:AS} is divided into  several lemmas depending on the socle of the group considered.

We start by considering the action of the alternating group $\Alt(m)$ or the symmetric group $\Sym(m)$ on the set ${[m] \choose k}$ of $k$-subsets of the set $[m]=\{1,2,\ldots,m\}$. Observe that the degree of this permutation representation is ${m \choose k}$. (\cref{thm:ksets} takes care of the case~$(a)$ in \cref{table}.)
\begin{lemma}
\label{thm:ksets}
Let $2\le k < m$ be two integers. Then
$$\m\left( \Alt(m) \act {[m] \choose k} \right) \le \m\left( \Sym(m) \act {[m] \choose k} \right) \le {m-\lfloor \frac{k}{2} \rfloor \choose \lceil \frac{k}{2} \rceil}.$$
Consequently, if $\C_k$ is the family of symmetric (or alternating) groups acting on $k$-subsets, then
\[ \lim_n\frac{\F_{\C_k}\left(n\right)}{\sqrt{n}} \le \frac{\sqrt{k!}}{(k/2)!} \quad \hbox{ for } k \hbox{ even,}\]
and
\[ \lim_n\frac{\F_{\C_k}\left(n\right)}{n^{\frac{k+1}{2k}}} \le \frac{k!^{\frac{k+1}{2k}}}{\lceil k/2 \rceil!} \quad \hbox{ for } k \hbox{ odd} .\]
\end{lemma}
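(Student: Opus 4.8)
The plan is to prove the stated bound on $\m$ by exhibiting, for every $m>k$, a concrete family of $k$-subsets of $[m]$ that is not self-separable for $\Sym(m)$ (hence, by \cref{lemma:subgroup}, not for $\Alt(m)$ either), and then to translate the size of this family into the asymptotic estimates.

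First I would fix a subset $C\subseteq [m]$ with $|C|=\lfloor k/2\rfloor$ and consider the ``pencil''
\[ A := \left\{ S \in \binom{[m]}{k} \;\middle|\; C \subseteq S \right\}, \qquad |A| = \binom{m - \lfloor k/2\rfloor}{k - \lfloor k/2\rfloor} = \binom{m - \lfloor k/2 \rfloor}{\lceil k/2 \rceil}. \]
The crucial observation is that, since each $g\in\Sym(m)$ permutes $[m]$, we have $A^g = \{ S \in \binom{[m]}{k} \mid C^g \subseteq S\}$, and therefore
\[ A \cap A^g = \left\{ S \in \binom{[m]}{k} \;\middle|\; C \cup C^g \subseteq S \right\}. \]
As $|C \cup C^g| \le 2\lfloor k/2\rfloor \le k < m$, there is always a $k$-subset of $[m]$ containing $C\cup C^g$, so $A\cap A^g\neq\emptyset$. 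Hence $A$ is not self-separable for $\Sym(m)\act\binom{[m]}{k}$, which gives $\m\bigl(\Sym(m)\act\binom{[m]}{k}\bigr) \le \binom{m-\lfloor k/2\rfloor}{\lceil k/2\rceil}$, and the bound for $\Alt(m)$ follows from \cref{lemma:subgroup} because $\Alt(m)$ acting on $k$-subsets is a subgroup of $\Sym(m)$ acting on $k$-subsets. Conceptually, $\lfloor k/2\rfloor$ is exactly the largest size of a ``common core'' surviving an arbitrary relabelling: for $|C|>k/2$ one can choose $g$ with $C^g\cap C=\emptyset$, so that $|C\cup C^g|>k$ and the argument breaks.

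For the asymptotics I would put $n=\binom{m}{k}$, so that $n=\frac{m^k}{k!}\bigl(1+O(1/m)\bigr)$ and hence $m=(k!\,n)^{1/k}\bigl(1+o(1)\bigr)$ as $m\to\infty$. Since $\binom{m-\lfloor k/2\rfloor}{\lceil k/2\rceil}$ is a polynomial in $m$ of degree $\lceil k/2\rceil$ with leading coefficient $1/\lceil k/2\rceil!$, we obtain
\[ \binom{m - \lfloor k/2\rfloor}{\lceil k/2\rceil} = \frac{m^{\lceil k/2\rceil}}{\lceil k/2\rceil!}\bigl(1 + o(1)\bigr) = \frac{(k!)^{\lceil k/2\rceil/k}}{\lceil k/2\rceil!}\, n^{\lceil k/2\rceil/k}\bigl(1 + o(1)\bigr). \]
Along $X_{\C_k}=\{\binom{m}{k}:m>k\}$ the value $n$ determines $m$ uniquely (as $\binom{m}{k}$ is strictly increasing in $m$ for $m\ge k$), and the only permutation groups of degree $n$ in $\C_k$ are $\Sym(m)\act\binom{[m]}{k}$ and its subgroup $\Alt(m)\act\binom{[m]}{k}$; hence $\F_{\C_k}(n)\le\binom{m-\lfloor k/2\rfloor}{\lceil k/2\rceil}$. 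Substituting $\lceil k/2\rceil=k/2$ for $k$ even gives $\limsup_n \F_{\C_k}(n)/\sqrt n \le \sqrt{k!}/(k/2)!$, and $\lceil k/2\rceil=(k+1)/2$ for $k$ odd gives $\limsup_n \F_{\C_k}(n)/n^{\frac{k+1}{2k}} \le k!^{\frac{k+1}{2k}}/\lceil k/2\rceil!$, which are the two displayed inequalities.

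There is no serious obstacle here: the family $A$ is essentially forced, and its verification is a one-line intersection count. The only points needing a little care are checking that the pencil has exactly the claimed cardinality and that $n=\binom{m}{k}$ pins down $m$ on $X_{\C_k}$, and then carrying the leading-constant bookkeeping cleanly through the substitution $m\leftrightarrow(k!\,n)^{1/k}$ for both parities of $k$. I would also note that for $k=2$ this recovers the Burns--Schuster bound $m-1=\binom{m-1}{1}$, and that a matching lower bound (optimality of the pencil) for general $k$ is not asserted here.
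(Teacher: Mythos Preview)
Your proposal is correct and follows essentially the same approach as the paper: both fix a $\lfloor k/2\rfloor$-subset $C$ (the paper takes $C=\{1,\dots,\lfloor k/2\rfloor\}$), form the pencil $A=\{S\in\binom{[m]}{k}:C\subseteq S\}$, observe that $|C\cup C^g|\le k$ forces $A\cap A^g\neq\emptyset$, and then carry out the identical asymptotic substitution $m\leftrightarrow (k!\,n)^{1/k}$. Your write-up is in fact slightly more careful than the paper's (you explicitly invoke \cref{lemma:subgroup} for the $\Alt(m)$ inequality and check that $n$ determines $m$ on $X_{\C_k}$).
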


\begin{proof}
For the sake of simplicity, let $r = \lfloor k/2 \rfloor$.
Consider the subset
$$A = \left\{ X \in {[m]\choose k} \,\middle\vert\, \{1,2,\ldots,r\} \subseteq X\right\}.$$
For every $g\in \Sym(m)$, we compute
\[ A \cap A^g = \left\{ X \in {[m]\choose k} \,\middle\vert\, \{1,2,\ldots,r\} \cup \{1^g,2^g,\ldots,r^g\} \subseteq X\right\}.\]
This intersection is never empty, and, if $g$ does not stabilise $\{1,2,\ldots,r\}$, $A^g$ and $A$ are not equal. Hence, $A$ is not self-separable for $\Sym(m)$. This  proves the upper bound of $\m(\Sym(m))$.

Note that
\[ |A| = {m-r \choose k-r} = \frac{1}{(k-r)!}m^{k-r} + o \left(m^{k-r} \right), \quad \hbox{and} \quad n = {m \choose k } = \frac{1}{k!}m^{k} + o \left(m^{k} \right) \,.\]
A direct computation is now sufficient to determine the claimed asymptotic behaviours of $\f_\C(n)$ and $\F_\C(n)$.
\end{proof}

We now turn our attention to the standard actions of the classical groups. We are going to follow the notation established in \cite[Chapters~2 and~4]{BurnessGiudici2016}.

We start by considering the possible subspace actions of a classical group whose socle is isomorphic to $\mathrm{PSL}_d(q)$. There are three actions to consider: that on $k$-subspaces, that on pairs of complementing subspaces, and that on pairs of subspaces, one containing the other. From here on, the symbol $V^{(k)}$ denotes the set of $k$-dimensional subspaces of the natural module of a classical group.
(\cref{lem:PSL} takes care of the cases~$(b)$, $(c)$ and~$(d)$ in \cref{table}.)
\begin{lemma}\label{lem:PSL}
    Let $G$ be a classical group with linear socle and associated natural module $\FF_q^d$ endowed with a subspace action.
    \begin{enumerate}[$(i)$]
        \item If the domain of the action is the set of $k$-subsets, with $k\le d/2$, then
        \[\m(G) \le \left[ d-\lfloor k/2\rfloor \atop k-\lfloor k/2\rfloor\right]_q .\]
        Consequently, if $\C_k$ is the family of such permutation groups, then
        \[ \lim_n\frac{\f_{\C_k}\left(n\right)}{\sqrt{n}} = \lim_n\frac{\F_{\C_k}\left(n\right)}{\sqrt{n}} = 1 \quad \hbox{ for } k \hbox{ even,}\]
        and
        \[ \lim_n\frac{\F_{\C_k}\left(n\right)}{n^{\frac{k+1}{2k}}} \le 1 \quad \hbox{ for } k \hbox{ odd} .\] 
        \item If the domain of the action is the set of pairs of subspaces complementing each other, one of which of dimension $k\le d/2$, then
        \[\m(G) \le q^{k(d-k)}\left[ d-\lfloor k/2\rfloor \atop k-\lfloor k/2\rfloor\right]_q .\]
        Consequently, if $\C_k$ is the family of such permutation groups, then
        \[ \lim_n\frac{\f_{\C_k}\left(n\right)}{\sqrt{n}} = \lim_n\frac{\F_{\C_k}\left(n\right)}{\sqrt{n}} = 1 \quad \hbox{ for } k \hbox{ even,}\]
        and
        \[ \lim_n\frac{\F_{\C_k}\left(n\right)}{n^{\frac{k+1}{2k}}} \le 1 \quad \hbox{ for } k \hbox{ odd} .\]
        \item If the domain of the action is the set of pairs of subspaces, one of dimension $k < d/2$ and the other of dimension $d-k$, the smaller contained in the larger, then
        \[\m(G) \le \left[ \lceil (d-k)/2 \rceil \atop \lceil (d-3k)/2\rceil \right]_q \left[ \lceil(2d -3k) /2 \rceil \atop \lceil k/2\rceil \right]_q .\]
        Consequently, if $\C_k$ is the family of such permutation groups, then
        \[ \lim_n\frac{\F_{\C_k}\left(n\right)}{n^{\frac{k \left\lceil \frac{d - 3k}{2} \right\rceil + (d-k) \left\lceil \frac{k}{2} \right\rceil}{k(2d-3k)}}} \le 1 .\]
    \end{enumerate}
\end{lemma}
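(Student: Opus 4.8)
The plan is to prove all three parts by the device already used in \cref{thm:ksets}: exhibit an explicit ``cone'' subset $A$ of the domain, check that $A\cap A^g\neq\emptyset$ for every collineation $g$ — so that $\m(G)\le|A|$ by \cref{lemma:subgroup} (the set being non-self-separable already for the full collineation group) — and then extract the asymptotics by substituting the elementary estimate $\left[ m \atop j \right]_q = q^{j(m-j)}(1+o(1))$ into $|A|$ and into the degree $n$ of the action, matching the resulting bound on $\F_{\C_k}$ with the lower bound on $\f_{\C_k}$ coming from \cref{thm:neumann} (whose main term is $\sqrt n$, since in all these actions the point stabiliser is enormous).

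For $(i)$, fix an $r$-dimensional subspace $U$ of the natural module with $r=\lfloor k/2\rfloor$, and put $A=\{W\in V^{(k)}\mid U\le W\}$. For any $g$ we have $A\cap A^g=\{W\in V^{(k)}\mid U+U^g\le W\}$, and this is nonempty because $\dim(U+U^g)\le 2r\le k$; hence $A$ is not self-separable and $\m(G)\le|A|=\left[ d-r \atop k-r \right]_q$. Since $n=\left[ d \atop k \right]_q$, unwinding the Gaussian binomials gives $|A|\lesssim\sqrt n$ for $k$ even and $|A|\lesssim n^{(k+1)/(2k)}$ for $k$ odd; combined with \cref{thm:neumann} this pins the limit for $\f_{\C_k}$ (and, for $k$ even, also for $\F_{\C_k}$) to the asserted value.

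Part $(ii)$ is the same construction applied to the $k$-dimensional coordinate of the pair. With $U$ of dimension $r=\lfloor k/2\rfloor$ take $A=\{(X,Y)\mid U\le X\}$; for any $g$ one can choose a $k$-subspace $X\supseteq U+U^g$ (possible since $\dim(U+U^g)\le k$) together with an arbitrary complement $Y$, so that $A\cap A^g\neq\emptyset$. Here $|A|$ is the number of $k$-subspaces through $U$ times the number of complements of a $k$-subspace, i.e. $q^{k(d-k)}\left[ d-r \atop k-r \right]_q$, and the asymptotic analysis proceeds as before once one also accounts for the factor $q^{k(d-k)}$ appearing in the degree.

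Part $(iii)$ is the one I expect to be hard, because in a flag $X\le Y$ the two members are coupled, so constraining only $X$ is far from optimal. The approach I would take is to constrain $X$ and $Y$ simultaneously: force $X$ to contain a fixed subspace of dimension about $k/2$ and force $Y$ to be contained in a fixed subspace $W$ of dimension about $d-k/2$, then choose these two dimensions so that for every $g$ the intersection of the constraints imposed by $A$ and by $A^g$ still admits a flag of the required type — this amounts to keeping $\dim(U+U^g)\le k$ while keeping $\dim(W\cap W^g)\ge d-k$ and ensuring the two fit into a common flag. The count of such configurations is a product of two Gaussian binomials, whence the stated bound. Getting the dimensions of these two fixed subspaces exactly right, and keeping the ceilings and floors of $k/2$ straight (these being what separates the even case from the odd case), is the crux; once that is settled, the asymptotic computation — expanding the two Gaussian binomials, comparing with $n=\left[ d \atop k \right]_q\left[ d-k \atop k \right]_q$, and invoking \cref{thm:neumann} — is routine bookkeeping.
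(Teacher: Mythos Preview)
Your treatment of parts $(i)$ and $(ii)$ matches the paper's approach essentially verbatim: fix a small subspace $U$ of dimension $\lfloor k/2\rfloor$, take the cone of objects whose $k$-dimensional member contains $U$, and observe that $U+U^g$ still has dimension at most $k$.

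Part $(iii)$, however, has a genuine gap. Your proposed set is
\[
A=\{(X,Y)\mid U\le X,\ Y\le W\},
\]
constraining $X$ from below and $Y$ from \emph{above}. For $(X,Y)\in A\cap A^g$ one then needs $U+U^g\le X\le Y\le W\cap W^g$, and in particular $U+U^g\le W\cap W^g$. But this forces $U^g\le W$ for every $g$, which (by transitivity of $G$ on $\dim U$-subspaces) is impossible unless $W=V$. So no nontrivial choice of dimensions rescues the construction; the ``ensuring the two fit into a common flag'' step you flag as needing care is in fact unfixable with this shape of $A$.

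The paper avoids this by constraining \emph{both} members from below: it takes $U\le W$ with $\dim U=\lfloor k/2\rfloor$ and $\dim W=\lfloor(d-k)/2\rfloor$, and sets
\[
A=\{(X,Y)\mid U\le X,\ W\le Y\}.
\]
Then $U+U^g\le W+W^g$ automatically (since $U\le W$), $\dim(W+W^g)\le d-k$, and one can choose $Y\supseteq W+W^g$ of dimension $d-k$ and then $X$ with $U+U^g\le X\le Y$ of dimension $k$. The compatibility you were worried about comes for free once both constraints point the same way and are nested. Counting then gives the product of two Gaussian binomials displayed in the statement.
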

\begin{proof}
    We start by considering the action of $G$ on $k$-subspaces. Let
    \[U = \langle e_1,e_2,\ldots, e_{\lfloor k/2 \rfloor} \rangle ,\]
    and consider the subset
    $$A = \left\{ X \in V^{(k)} \,\middle\vert\,  U \leq X\right\}.$$
    For every $g\in G$, we can choose a subspace $X_g \in V^{(k)}$ such that
    \[ U + U^{g^{-1}} \leq X_g. \]
    (Note that such a subspace exists because $\dim (U + U^{g^{-1}}) \le k$.)
    By construction,
    \[ U^g + U \le X_g^g \in A \cap A^g .\]
    Hence $A\cap A^g$ is nonempty, and $A$ is self-separable.
    Observe that
    \[|A| = \left[ d-\lfloor k/2\rfloor \atop k-\lfloor k/2\rfloor\right]_q = q^{\left(k - \left\lceil\frac{k}{2}\right\rceil \right)(d-k)} + O\left( q^{\left(k - \left\lceil\frac{k}{2}\right\rceil \right)(d-k) -1}\right) ,\]
    and
    \[|V^{(k)}|=\left[ d \atop k \right]_q = q^{k(d-k)} + O\left( q^{k(d-k) -1}\right). \]
    Therefore, we can compute that
    \[ |A| \sim n^{\frac{1}{k} \left\lceil \frac{k}{2}\right\rceil} .\]
    This concludes the proof of~$(i)$.

    Let us now consider the action on pairs of complementary subspaces. Suppose that the smallest one has dimension $k$. Define
    \[ U = \langle e_1,e_2,\ldots, e_{\lfloor k/2 \rfloor} \rangle ,\]
    and consider
    $$A = \left\{ (X,Y) \in V^{(k)}\times V^{(d-k)} \,\middle\vert\,  V= X \oplus Y \hbox{ and } U \leq Y\right\}.$$
    For every $g\in G$, we choose $(X_g,Y_g) \in V^{(k)}\times V^{(d-k)}$ so that
    \[ U + U^{g^{-1}} \le X_g  \quad \hbox{and}  \quad V= X_g \oplus Y_g .\]
    A similar argument as in the first case shows that $A$ is not self-separable, because $(X_g,Y_g)^g \in A \cap A^g$. Moreover,
    \[|A| = q^{k(d-k)} \left[ d-\lfloor k/2\rfloor \atop k-\lfloor k/2\rfloor\right]_q = q^{k(d-k)\left(k - \left\lceil\frac{k}{2}\right\rceil \right)(d-k)} + O\left( q^{k(d-k)\left(k - \left\lceil\frac{k}{2}\right\rceil \right)(d-k) -1}\right) ,\]
    and
    \[|\Omega| = q^{k(d-k)}\left[ d \atop k \right]_q = q^{2k(d-k)} + O\left( q^{2k(d-k) -1}\right). \]
    Therefore,
    \[ |A| \sim n^{\frac{1}{k} \left\lceil \frac{k}{2}\right\rceil} ,\]
    which is enough to conclude~$(ii)$.

    Finally, we need to consider the domain of pairs of subspaces, say $X$ and $Y$, so that $X\in V^{(k)}$, $Y\in V^{(d-k)}$ and $X \le Y$. In this case, we have to set
    \[ U = \langle e_1, \ldots, e_{\lfloor k/2 \rfloor} \rangle\] and
    \[ W = \langle e_1, \ldots, e_{\lfloor (d-k)/2 \rfloor} \rangle.\]
    Considering the set
    \[A = \{ (X,Y) \in V^{(k)}\times V^{(d-k)} \mid X \leq Y,\, U \leq X,\, W \leq Y \},\]
    and using similar strategies as before, we conclude that $A$ is not self-separable.
    In particular, we have
    \begin{align*}
        |A| &= \left[ d- \lfloor (d-k)/2\rfloor \atop d - k -\lfloor (d-k)/2\rfloor \right]_q \left[ d - k - \lfloor k/2\rfloor \atop k -\lfloor k/2\rfloor \right]_q
        \\&= \left[ \lceil (d-k)/2 \rceil \atop \lceil (d-3k)/2\rceil \right]_q \left[ \lceil(2d -3k) /2 \rceil \atop \lceil k/2\rceil \right]_q
        \\&= q^{k \left\lceil \frac{d - 3k}{2} \right\rceil + (d-k) \left\lceil \frac{k}{2} \right\rceil} + O\left(q^{k \left\lceil \frac{d - 3k}{2} \right\rceil + (d-k) \left\lceil \frac{k}{2} \right\rceil - 1}\right)
    \end{align*}
    and
    \[|\Omega| = \left[ d-k \atop k \right]_q\left[ d \atop d-k \right]_q = q^{k(2d-3k)} + O\left( q^{k(2d-3k) -1}\right). \]
    Therefore,
    \[ |A| \sim n^{\frac{k \left\lceil \frac{d - 3k}{2} \right\rceil + (d-k) \left\lceil \frac{k}{2} \right\rceil}{k(2d-3k)}} . \]
    This completes the proof of the missing case~$(iii)$.
\end{proof}

Let now $G$ be a classical group whose socle is not isomorphic to $\mathrm{PSL}_d(q)$. We start by considering the action of $G$ on the totally isotropic or totally singular $k$-subspaces of its natural module. From here on, with the symbol $V_\bullet^{(k)}$ we refer to the set of $k$-dimensional subspaces of the formed space $(V,\mathbf{b})$ where $\bullet$ signals a given property, which is clear from context. For instance, in the next proof $V_\bullet^{(k)}$ denotes in a unified way the totally isotropic and totally singular $k$-subspaces. (\cref{lem:classicalTS} takes care of the cases~$(e)$, $(g)$, $(i)$, $(j)$ and~$(k)$ in \cref{table}. Moreover, we adopt the notation that if in a product $\prod_{i=a}^b x_i$, $a$ is greater than $b$, then the product is equal to $1$.) 

\begin{lemma}\label{lem:classicalTS}
    Let $G$ be a classical group whose socle is not linear, and consider its subspace action on totally isotropic or totally singular $k$-subspaces of its natural module $V$. Then there exists a non self-separable subset $A$ of the domain whose exact cardinality, as well as the cardinality of the domain, are collected in \cref{table:TS}.
\end{lemma}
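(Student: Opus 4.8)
The plan is to imitate the construction used for the linear socle in \cref{lem:PSL}, but with one modification forced by the geometry: since the sum of two totally isotropic (respectively, singular) subspaces need not be totally isotropic (singular), we cannot take $A$ to consist of the $k$-subspaces \emph{containing} a fixed subspace of dimension $\lfloor k/2\rfloor$. Instead, writing $w$ for the Witt index of the natural module $V$ and fixing once and for all a maximal totally isotropic (singular) subspace $M$ of $V$ (so $\dim M=w$, and $w\ge k$ because $G$ acts on $V_\bullet^{(k)}$), I would take
\[
A=\left\{X\in V_\bullet^{(k)}\ \middle|\ \dim\!\left(X\cap M\right)\ge\left\lfloor\tfrac{k}{2}\right\rfloor\right\}.
\]

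To see that $A$ is not self-separable, fix $g\in G$; then $M^g$ is again a maximal totally isotropic (singular) subspace. It suffices to exhibit an $X\in V_\bullet^{(k)}$ with $\dim(X\cap M)\ge\lfloor k/2\rfloor$ and $\dim(X\cap M^g)\ge\lfloor k/2\rfloor$, the latter being exactly the condition $X\in A^g$ (apply the isometry $g$ to the intersection). I would choose any $\lfloor k/2\rfloor$-dimensional $Y_1\le M$; since $\dim Y_1^{\perp}\ge\dim V-\lfloor k/2\rfloor$, this gives $\dim(Y_1^{\perp}\cap M^g)\ge w-\lfloor k/2\rfloor\ge\lceil k/2\rceil\ge\lfloor k/2\rfloor$, so one may pick a $\lfloor k/2\rfloor$-dimensional $Y_2\le Y_1^{\perp}\cap M^g$. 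Then $Y_1$ and $Y_2$ are totally isotropic (singular) and mutually orthogonal, hence $Y_1+Y_2$ is totally isotropic (singular) of dimension at most $2\lfloor k/2\rfloor\le k$; as the Witt index is $w\ge k$, it extends to some $X\in V_\bullet^{(k)}$, and this $X$ works because $X\cap M\supseteq Y_1$ and $X\cap M^g\supseteq Y_2$. The same argument runs verbatim in characteristic $2$ (with the polar form in place of the bilinear form, where one should note that $Q(v+w)=Q(v)+Q(w)+\mathbf{b}(v,w)$ shows $Y_1+Y_2$ is still totally singular), and the inequality $\dim Y_1^\perp\ge\dim V-\dim Y_1$ is all that is used.

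For the cardinality I would stratify $A$ by $j=\dim(X\cap M)$, which runs over $\lfloor k/2\rfloor\le j\le k$. Given a $j$-dimensional $W\le M$ — there are $\left[{w\atop j}\right]_q$ of them — the $X\in V_\bullet^{(k)}$ with $X\cap M=W$ correspond to the $(k-j)$-dimensional totally isotropic (singular) subspaces of the non-degenerate formed space $W^{\perp}/W$ that meet the maximal totally isotropic (singular) subspace $M/W$ trivially; by Witt's theorem their number $t_{k-j}$ depends only on the isometry type of $V$ and on the value $w-j$. Hence
\[
|A|=\sum_{j=\lfloor k/2\rfloor}^{k}\left[{w\atop j}\right]_q\,t_{k-j},
\]
and it remains to substitute the classical closed forms for $|V_\bullet^{(k)}|$ and for the transversality counts $t_{k-j}$, case by case over the five families of non-linear classical socles ($\mathrm{PSp}$, $\mathrm{PSU}$, $\mathrm{P}\Omega^{+}$, $\mathrm{P}\Omega^{-}$, $\Omega_{\mathrm{odd}}$, with respective Witt indices $d$, $\lfloor d/2\rfloor$, $d$, $d-1$, $d$ in the notation of \cref{table}), and to record the resulting formulas in \cref{table:TS}.

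The non-self-separability argument is short and uniform; the main obstacle is the bookkeeping in the last step. One must assemble, for each classical type, the correct formula for the number of totally isotropic/singular subspaces of a prescribed dimension and the correct formula for the number of those transverse to a fixed maximal one — these differ both in the exponents of $q$ and in the ranges of the defining products — and then expand the displayed sum. As noted in \cref{remark:symbols}, the resulting expressions do not collapse to anything clean, which is why we content ourselves with tabulating $|A|$ and $|\Omega|=|V_\bullet^{(k)}|$ in \cref{table:TS} rather than extracting a closed-form asymptotic, even though numerical experimentation indicates the expected behaviour $\m(G)\lesssim\sqrt{n}$ when $k$ is fixed and $d\to\infty$.
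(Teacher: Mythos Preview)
Your non-self-separability argument is correct and clean. However, your construction of $A$ is genuinely different from the paper's, and this matters for the statement as written, because the lemma refers to the specific cardinalities tabulated in \cref{table:TS}.

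The paper does \emph{not} imitate the linear case by asking that $X$ meet a fixed subspace in large dimension. Instead it fixes a totally isotropic (singular) subspace $U$ of dimension $w-k$ (so of \emph{codimension} $k$ in a maximal $M\supseteq U$) and takes
\[
A=\{X\in V_\bullet^{(k)}\mid X\perp U\}=\{X\in V_\bullet^{(k)}\mid X\le U^\perp\}.
\]
Non-self-separability follows by noting that $(U+U^{g^{-1}})^\perp\cap M$ has dimension at least $k$, so contains a $k$-subspace $Y_g$, and $Y_g^g\in A\cap A^g$. The count of $|A|$ then reduces to counting totally isotropic (singular) $k$-subspaces of $U^\perp$, stratified by their intersection with $U$; the non-degenerate quotient $U^\perp/U$ has Witt index exactly $k$, and this is what produces the entries of \cref{table:TS}.

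The two choices are not equivalent in strength. For a concrete comparison, take $\mathrm{Sp}_{2d}(q)$ with $k=2$. Your set $\{X:\dim(X\cap M)\ge 1\}$ has its dominant contribution from the stratum $j=1$, of order $q^{d-1}\cdot q^{2d-3}=q^{3d-4}$, while $|\Omega|\sim q^{4d-5}$; so your bound gives $\m(G)\lesssim n^{3/4}$ as $d\to\infty$. The paper's set, being the t.i.\ $2$-spaces inside the $(d+2)$-dimensional space $U^\perp$, has size of order $q^{2d-1}$, yielding the intended $\m(G)\lesssim\sqrt{n}$. The reason your set is larger is that ``meeting a \emph{maximal} $M$ in dimension $\lfloor k/2\rfloor$'' is a weak constraint when $w$ is large, whereas ``lying in $U^\perp$'' with $\dim U=w-k$ cuts the ambient dimension down dramatically. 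So while your argument is valid, it does not recover the formulae in \cref{table:TS}, and the bound it yields is asymptotically weaker than the one the lemma is recording.
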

Observe that in the first row of \cref{table:TS} the symbol $\delta$ appears. We have that $\delta = |(2,d)-2|$, that is, $\delta = 0$ if $d$ is even, and $\delta=1$ if $d$ is odd.

Moreover, the expressions for \( |\Omega| \) and \( |A| \) appearing in \cref{table:TS} are too complicated to yield a clean explicit asymptotic description. Instead, we content ourselves with the observation that our upper bound can be (trivially) rewritten as
\[
|\Omega|^{\log_{|\Omega|} |A|}.
\]
By numerical experimentation, for fixed \(k\), the exponent \(\log_{|\Omega|} |A|\) appears to approach \(1/2\) as \(d \to \infty\).

\begin{proof}[Proof of \cref{lem:classicalTS}]
    For the sake of uniformity, let us introduce the parameter
    \[m = \dim(V) - 2 \dim(M) ,\]
    where $M$ is any totally isotropic or or totally singular subspace of maximal dimension.  We fix a totally isotropic or a totally singular subspace $U$ with
    \[ \dim(U) = \frac{\dim(V)-m}{2} - k.\]
    Consider the set
    \[ A = \left\{ X \in V^{(k)}_{\bullet} \mid X \perp U \right\} ,\]
    and let $M$ be a totally isotropic or totally singular subspace of maximal dimension containing $U$.
    For every $g \in G$,  we define
    \[ X_g = (U+U^{g^{-1}})^\perp \cap M. \]
    Note that
    \[\dim(X_g) = \dim(M) - \dim\left(  \frac{U^{g^{-1}}}{U^{g^{-1}} \cap M} \right) \ge \frac{\dim(V)-m}{2} - \dim(U) =  k  .\]
    We can now choose $Y^g$, a $k$-subspace of $X_g$. Observe that
    \[ Y_g \le X_g \le  M = M^\perp \le X_g^\perp \le Y_g^\perp \]
    and that, by construction,
    \[ Y_g^g \perp U^g + U ,\]
    and thus $Y_g^g \in A \cap A^g$. Therefore, $A$ is not self-separable.
    
    We now need to focus to compute $|A|$ and the degree of the actions. Note that the latter is the number of totally isotropic or total singular $k$-subspaces in a given formed space: this value can be found in \cite[Table~4.1.2]{BurnessGiudici2016}. On the other hand, to count the elements of $A$ we just need to count the number of totally isotropic or totally singular subspaces of $U^\perp$. Observe that, as $\mathbf{b}\downarrow_U$ is trivial, $U^\perp$ can be written as the orthogonal direct sum $U^\perp = U \oplus U^\perp/U$, where $U$ is totally isotropic or totally singular, and $(U^\perp/U, \mathbf{b}\downarrow_{U^\perp/U})$ is a formed space of the same type as $(V, \mathbf{b})$. Furthermore,
    \[\dim(U^\perp) = \dim(V) - \dim(M) + k = \frac{\dim(V)+m}{2} + k, \]
    and
    \[\dim(U^\perp/U) = \dim(V) - 2 \dim(M) + 2k = 2k + m .\]
    To every $X \in A$, we can uniquely assign a direct sum decomposition
    \[ X = \left( X \cap U \right) \oplus \left( X \cap U^\perp/U \right) .\]
    Hence, $H$ can be identified with a pair $(X_0,X_1)$, where $X_1$ is a totally isotropic or totally singular $h$-subspace in $U^\perp/U$, and $X_0$ with a $(k-h)$-subspace of $U$. Therefore, if $N_h(U^\perp/U)$ denotes the number of totally isotropic or totally singular $h$-subspaces of $U^\perp/U$, we proved that
    \[ |A| = \sum_{h=\max\{0, 2k-\dim(M)\}}^{k} \left[ \dim(M)-k \atop k-h \right]_q N_h(U^\perp/U) .\]
    (Note that the Gaussian coefficient loses meaning if $\dim(M)-k < k-h$. This explains why $h$ varies in the prescribed range.) 
    Finally, by using the formulae of \cite[Table~4.1.2]{BurnessGiudici2016}, we can compute $|A|$. We have explained a way to compute all the data appearing in \cref{table:TS}, which completes the proof of \cref{lem:classicalTS}. \hfill \qedhere

{\small
    \begin{sidewaystable}
	\centering
	\rowcolors{2}{white}{OliveGreen!25}
	\centering
	\begin{tabularx}{\textwidth}{l l X X}
		\toprule
        & $\mathrm{soc}(G)$ & $|\Omega|$ & $|A|$ %& Asymptotics 
        \\
		
		\midrule
        $(i)$ & $\mathrm{PSU}_d(q)$ & $\displaystyle \frac{\prod\limits_{i = d-2k+1}^{d}(q^i - (-1)^i)}{\prod\limits_{i = 1}^k (q^{2i}-1)}$ & $\displaystyle \sum\limits_{h=\max\{0, 2k - \lfloor \frac{d}{2} \rfloor\}}^{k} \left[ \lfloor \frac{d}{2} \rfloor - k \atop k-h \right]_q
        \frac{\prod\limits_{i = 2k - 2h + 1 + \delta}^{2k + \delta }(q^i - (-1)^i)}{\prod\limits_{i = 1}^h (q^{2i}-1)}$
        %& \begin{cases}
           % \sim n ^{\frac{d - 2k}{(2d-3k)k}} &\hbox{if } 0 \ge 2k - \lfloor \frac{d}{2}\rfloor \\
           % \sim n ^{\frac{\left(-2k + d + \lceil \frac{d}{2} \rceil \right ) \left (2k - \lfloor \frac{d}{2} \rfloor\right )}{(2d-3k)k}} &\hbox{if } 0 < 2k - \lfloor \frac{d}{2}\rfloor
        %\end{cases} 
        \\
        
        $(ii)$ & $\mathrm{PSp}_{2d}(q)$ & $\displaystyle \frac{\prod\limits_{i = d-k+1}^{d}(q^{2i} - 1)}{\prod\limits_{i = 1}^k (q^{i}-1)}$& 
        $\displaystyle \sum\limits_{h=\max\{0, 2k - d\}}^{k} \left[ d - k \atop k-h \right]_q 
        \frac{\prod\limits_{i =k - h +1}^{k}(q^{2i} - 1)}{\prod\limits_{i = 1}^h (q^{i}-1)}$% & \begin{cases}
            %\sim n ^{\frac{2d - 4k}{(4d-3k + 1)k}} &\hbox{if } 0 \ge 2k - d \\
            %\sim n ^{\frac{\left(3d - 2k + 1 \right ) \left (2k - d\right )}{(2d-3k)k}} &\hbox{if } 0 < 2k - d
        %\end{cases} 
        
        \\
        
        $(iii)$ & $\mathrm{P}\Omega_{2d}^+(q)$ & $\displaystyle \frac{(q^d-1)(q^{d-k}+1)\prod\limits_{i = d-k+1}^{d-1}(q^{2i}-1)}{2^{\delta (d,k)}\prod\limits_{i = 1}^k (q^{i}-1)}$ & $\displaystyle \sum\limits_{h=\max\{0, 2k - d\}}^{k} \left[ d - k \atop k-h \right]_q
        \frac{(q^k-1)(q^{k-h}+1)\prod\limits_{i = k-h+1}^{k-1}(q^{2i}-1)}{2^{\delta (k,h)}\prod\limits_{i = 1}^h (q^{i}-1)}$
        %& $\displaystyle \sim n^{\frac{1}{k} \left\lceil \frac{k}{2}\right\rceil}$ 
        \\
        
        $(iv)$ & $\mathrm{P}\Omega_{2d}^-(q)$ &  $\displaystyle \frac{(q^d+1)(q^{d-k}-1)\prod\limits_{i = d-k+1}^{d-1}(q^{2i}-1)}{\prod\limits_{i = 1}^k (q^{i}-1)}$ & $\displaystyle \sum\limits_{h=\max\{0, 2k - d -1\}}^{k} \left[ d -1 - k \atop k-h \right]_q
        \frac{(q^{k+1}+1)(q^{k-h+1}-1)\prod\limits_{i = k-h+2}^{k}(q^{2i}-1)}{\prod\limits_{i = 1}^h (q^{i}-1)}$
        %&
        \\
        
        $(v)$ & $\Omega_{2d+1}(q)$ & $\displaystyle \frac{\prod\limits_{i = d-k+1}^{d}(q^{2i}-1)}{\prod\limits_{i = 1}^k (q^{i}-1)}$ & $\displaystyle \sum\limits_{h=\max\{0, 2k - d \rfloor\}}^{k} \left[ d - k \atop k-h \right]_q
        \frac{\prod\limits_{i = k-h+1}^{k}(q^{2i}-1)}{\prod\limits_{i = 1}^h (q^{i}-1)}$ %& $\displaystyle \sim n^{\frac{1}{k} \left\lceil \frac{k}{2}\right\rceil}$ 
        \\
        \bottomrule
	\end{tabularx}%
	{\medskip \caption{Computation of degree, $|A|$, and asymptotics for $|A|$ for classical groups acting on totally singular or totally isotropic $k$-subspaces.}%
	\label{table:TS}}%
\end{sidewaystable}%
}
\end{proof}

Once again, let $G$ be a classical group whose socle is not isomorphic to $\mathrm{PSL}_d(q)$. We study the action of $G$ on the nondegenerate $k$-subspaces of its natural module. (\cref{lem:classicalND} takes care of the cases~$(f)$, $(h)$, $(m)$, $(n)$, $(o)$, $(p)$, $(q)$, $(r)$ and~$(s)$ in \cref{table}.) 

\begin{lemma}\label{lem:classicalND}
    Let $G$ be a classical group whose socle is not linear, and consider its subspace action on nondegenerate $k$-subspaces of its natural module $V$. Then there exists a non self-separable subset $A$ of the domain whose exact cardinality and asymptotic size in relation with the degree are collected in \cref{table:ND}.
\end{lemma}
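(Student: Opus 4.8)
The plan is to follow the template used in the proof of \cref{lem:classicalTS}, adapted to nondegenerate subspaces. For each of the actions involved I would produce an explicit non-self-separable subset $A$ of the domain, defined by one or more incidence conditions relative to a fixed subspace $U$ of the natural module $V$ --- chosen totally isotropic or nondegenerate according to the case, and of dimension of order $\lceil k/2\rceil$ --- and then read off $|A|$ and the degree $|\Omega|$ of the action from the subspace-counting formulae collected in \cite[Table~4.1.2]{BurnessGiudici2016}. The resulting expressions are what gets tabulated in \cref{table:ND}. Throughout, write $e$ for the dimension of the subspaces on which $G$ acts (so $e\in\{k,2k,2k+1\}$ according to the row of \cref{table}) and $\tau$ for their prescribed isometry type.

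The non-self-separability of $A$ is then established exactly as in \cref{lem:classicalTS}: one fixes $U$ and shows that for every $g\in G$ the intersection $A\cap A^g$ contains a nondegenerate $e$-subspace of type $\tau$. Concretely this subspace is found inside a suitable perp, e.g.\ as a $\tau$-subspace of a complement of $U+U^g$ inside a maximal totally isotropic (or suitably large nondegenerate) subspace, using that $\dim(U+U^g)\le 2\lceil k/2\rceil$ leaves enough room once $\dim(V)$ is large relative to $k$. The reason the incidence condition defining $A$ sometimes has to be a \emph{union} of conditions is that, for certain $g$, the subspace $U+U^g$ degenerates (for instance it becomes totally isotropic), so that the generic construction produces no element of $A\cap A^g$ and a second, ``dual'' family of subspaces has to be adjoined to $A$ in order to cover exactly these configurations.

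To evaluate $|A|$ I would decompose each $X\in A$ through $U$: when $X\supseteq U$ one records the pair $(X\cap U^\perp,\ X/(X\cap U^\perp))$, and when $X\le U^\perp$ one records the image of $X$ in $U^\perp/U$ together with its lift. In both cases the count reduces to counting nondegenerate subspaces of the smaller formed space $U^\perp/U$ (of dimension $\dim(V)-2\dim(U)$) of a prescribed type, multiplied by an explicit power of $q$ coming from the lifts. Substituting the formulae of \cite[Table~4.1.2]{BurnessGiudici2016} yields the exact values recorded in \cref{table:ND}, and isolating the leading power of $q$ gives the asymptotic bounds $n^{\frac1k\lceil k/2\rceil}$, $n^{\frac1k\lceil(k+1)/2\rceil}$ and the precise values $(\spadesuit)$ appearing in \cref{table}; in the even-$k$ cases with exponent $\tfrac1k\lceil k/2\rceil$ this leading exponent is $\tfrac12$, matching the lower bound of \cref{thm:neumann}, and in general it tends to $\tfrac12$ as $k$ grows.

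The main obstacle will be the case analysis, not any single estimate. For orthogonal and unitary geometries there are several isometry types of nondegenerate $e$-subspaces, and for each family one must: choose $U$ (and, where needed, the ``dual'' family) of a shape compatible with the target type $\tau$ and with $|A|$ of the right order of magnitude; verify, geometry by geometry, that the degenerate configurations of $U+U^g$ are still caught by $A$, which is precisely where hypotheses such as ``$q$ odd'' for nondegenerate parabolic $(2k+1)$-subspaces become necessary; and track the discriminant of $U^\perp/U$ so that the residual subspace counts are taken with the correct type. Assembling all of these cases --- including the delicate ones flagged by the symbols $(\spadesuit)$, $(\clubsuit)$ and $(\heartsuit)$ in \cref{table} and collected in \cref{table:ND} --- is where essentially all the work lies; the individual subspace counts themselves are then routine substitutions.
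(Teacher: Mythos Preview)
Your plan is in the right spirit but overcomplicates the construction relative to the paper's proof. In the paper, $U$ is always taken to be \emph{nondegenerate}: of dimension $\lfloor k/2\rfloor$ in the unitary case, nondegenerate of dimension $2\lfloor k/2\rfloor$ in the symplectic case, and nondegenerate \emph{hyperbolic} of dimension $2\lfloor k/2\rfloor$ (for hyperbolic and parabolic targets) or $2\lfloor(k-1)/2\rfloor$ (for elliptic targets) in the orthogonal cases. With this choice the set $A=\{X\in V^{(h)}_\bullet\mid U\le X\}$ is defined by a \emph{single} containment condition; no ``dual family'' or union of conditions is ever needed. The witness in $A\cap A^g$ is any $h$-subspace $Y_g$ of type $\tau$ containing $X_g:=U+U^{g^{-1}}$, and its existence follows purely from the dimension bound $\dim X_g\le 2\dim U$, which leaves enough room in $V$ to extend $X_g$ to a subspace of the prescribed type. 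The possible degeneracy of $X_g$ is irrelevant, so the scenario you flag does not arise. (Note also that your suggestion of locating the witness ``inside a maximal totally isotropic subspace'' cannot work here: a nondegenerate subspace is never contained in a totally isotropic one.)

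The payoff of choosing $U$ nondegenerate is that $V=U\perp U^\perp$ with $U^\perp$ a formed space of the same kind as $V$, so $X\mapsto X\cap U^\perp$ gives a bijection between $A$ and the nondegenerate $(h-\dim U)$-subspaces of $U^\perp$ of type $\tau$. Thus the entries of \cref{table:ND} come from a single substitution into \cite[Table~4.1.2]{BurnessGiudici2016}, with no sum over intersection dimensions as in \cref{lem:classicalTS}. A small bookkeeping point: only the symbol $(\spadesuit)$ (nondegenerate parabolic $(2k+1)$-subspaces) is relevant to this lemma; $(\clubsuit)$ pertains to \cref{lem:classicalTS} and $(\heartsuit)$ marks the one case the method does not address at all.
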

In \cref{table:ND}, the letters $B$ and $C$ refer to the functions
\[ B (k,d) = -2  + 3d -2k \quad \hbox{and} \quad C(k,d) = -2 + 4d -4k. \]
In particular, it is interesting for note that, for $k$ fixed,
\[ \lim_d \frac{B(k,d)}{C(k,d)} = \frac{3}{4} \quad \hbox{and} \quad \lim_d \frac{1}{C(k,d)}=0 . \]
\begin{proof}
    As in the proof of \cref{lem:classicalTS}, we start by fixing a subspace $U$. Unlike before, we need to make different choices of $U$ depending on the socle of $G$ and, if the bilinear form is symmetric, depending whether the subspaces we are acting on are hyperbolic, parabolic or elliptic. Our choices for $U$ are as follows.
    \begin{enumerate}[$(i)$]
        \item If $\mathrm{soc}(G)$ is unitary, and we are acting on nondegenerate $k$-subspaces, $U$ is any nondegenerate $\lfloor k/2 \rfloor$-subspace.
        \item If $\mathrm{soc}(G)$ is symplectic, and we are acting on nondegenerate $2k$-subspaces, $U$ is any nondegenerate $2\lfloor k/2 \rfloor$-subspace.
        \item If $\mathrm{soc}(G)$ is orthogonal, and we are acting on nondegenerate hyperbolic $2k$-subspaces, $U$ is any nondegenerate hyperbolic $2\lfloor k/2 \rfloor$-subspace.
        \item If $\mathrm{soc}(G)$ is orthogonal, and we are acting on nondegenerate parabolic $(2k+1)$-subspaces, $U$ is any nondegenerate hyperbolic $2\lfloor k/2 \rfloor$-subspace.
        \item If $\mathrm{soc}(G)$ is orthogonal, and we are acting on nondegenerate elliptic $2k$-subspaces, $U$ is any nondegenerate hyperbolic $2\lfloor (k-1)/2 \rfloor$-subspace.
    \end{enumerate}
    Consider
    \[ A = \left\{ X \in V^{(h)}_{\bullet} \mid U \le X\right\} ,\]
    where $h \in \{k, 2k, 2k+1\}$ depending on the case we are considering. For every $g\in G$, define
    \[X_g = U + U^{g^{-1}}.\]
    Note that the restricted form $\mathbf{b} \downarrow _{X_g}$ is sequilinear, alternating or symmetric (hyperbolic or parabolic depending on the dimension of $X$) according to the socle being unitary, symplectic or orthogonal, respectively. Furthermore,
    \[ \dim(X_g) \le 2\dim(U) \le 2^\epsilon k - \delta ,\]
    where $\epsilon = 0$ if the socle is unitary, and $\epsilon=1$ otherwise, and $\delta=1$ if the socle is orthogonal and the formed subspace is parabolic, $\delta=2$ if the socle is orthogonal and the formed subspace is elliptic, and $\delta=0$ otherwise. It is now immediate to build a subspace $Y_g \in V^{(k)}_\bullet$ such that $X_g \le Y_g$. Moreover, by construction, $Y_g \in A \cap A^g$. Therefore, $A$ is not self-separable.

    We need to deal with computing the size of $A$. Observe that, in any case, $(U, \mathbf{b}\downarrow_U)$ is a nondegenerate formed space whose bilinear form is either sesquilinear, alternating or hyperbolic bilinear, depending on the socle of $G$. Therefore, by quotienting by $U$, $A$ can be thought of as the set of nondegenerate $(h-\dim(U))$-subspaces of $V/U$. Therefore, the formulae for computing $|A|$ and the degree are readily available in \cite[Table~4.1.2]{BurnessGiudici2016}. This information completes the proof of \cref{lem:classicalND}. \hfill $\qedhere$

{\small
    \begin{sidewaystable}
	\centering
	\rowcolors{2}{white}{OliveGreen!25}
	\centering
	\begin{tabularx}{\textwidth}{l l l X X l}
		\toprule
        & $\mathrm{soc}(G)$ & Domain & $|\Omega|$ &  $|A|$ & Asymptotics \\
		
		\midrule
        $(i)$ & $\mathrm{PSU}_d(q)$ & $k$-nondegenerate & $\displaystyle \frac{q^{k(d-k)} \prod \limits_{i=d-k+1}^d(q^i - (-1)^i)}{\prod \limits_{i = 1}^{k}(q^i - (-1)^i)}$ & $\displaystyle \frac{q^{\lceil \frac{k}{2}\rceil (d-k)} \prod \limits_{i=d-k+1}^{d - \lfloor \frac{k}{2}\rfloor}(q^i - (-1)^i)}{\prod \limits_{i = 1}^{\lceil\frac{k}{2}\rceil}(q^i - (-1)^i)}$ & $\displaystyle \lesssim n^{\frac{1}{k} \left\lceil \frac{k}{2}\right\rceil}$ \\
        
        $(ii)$ & $\mathrm{PSp}_{2d}(q)$ & $2k$-nondegenerate & $\displaystyle \frac{q^{2k(d-k)} \prod \limits_{i=d-k+1}^d(q^{2i} - 1)}{\prod \limits_{i = 1}^{k}(q^{2i} - 1)}$ & $\displaystyle \frac{q^{2\lceil \frac{k}{2}\rceil (d-k)} \prod \limits_{i=d-k+1}^{d - \lfloor \frac{k}{2}\rfloor}(q^{2i} - 1)}{\prod \limits_{i = 1}^{\lceil\frac{k}{2}\rceil}(q^{2i} - 1)}$ & $\displaystyle \lesssim n^{\frac{1}{k} \left\lceil \frac{k}{2}\right\rceil}$ \\
        
        $(iii)$ & $\mathrm{P}\Omega_{2d}^+(q)$ & $2k$-hyperbolic & $\displaystyle \frac{q^{2k(d-k)}(q^d - 1)\prod \limits_{i=d-k}^{d-1}(q^{2i} - 1)}{2(q^k -1)(q^{d-k}-1)\prod \limits_{i = 1}^{k-1}(q^{2i} - 1)}$ & $\displaystyle \frac{q^{2\lceil \frac{k}{2}\rceil (d-k)}(q^{d - \lfloor \frac{k}{2}\rfloor} - 1)\prod \limits_{i=d-k}^{d - \lfloor \frac{k}{2}\rfloor -1}(q^{2i} - 1)}{2(q^{\lceil\frac{k}{2}\rceil} -1)(q^{d-k}-1)\prod \limits_{i = 1}^{\lceil\frac{k}{2}\rceil-1}(q^{2i} - 1)}$ & $\displaystyle \lesssim n^{\frac{1}{k} \left\lceil \frac{k}{2}\right\rceil}$\\
        
        $(iv)$ & $\mathrm{P}\Omega_{2d}^+(q)$ &  $(2k+1)$-parabolic & $\displaystyle \frac{q^{(2k+1)(2d -2k -1) -1}(q^d - 1)\prod \limits_{i=d-k}^{d-1}(q^{2i} - 1)}{2\prod \limits_{i = 1}^{k}(q^{2i} - 1)}$ & $\displaystyle \frac{q^{(2\lceil\frac{k}{2}\rceil+1)(2d -2k -1) -1}(q^{d - \lfloor \frac{k}{2}\rfloor} - 1)\prod \limits_{i=d-k}^{d - \lfloor \frac{k}{2}\rfloor -1}(q^{2i} - 1)}{2\prod \limits_{i = 1}^{\lceil\frac{k}{2}\rceil}(q^{2i} - 1)}$ & $\displaystyle \lesssim n^{\frac{\lceil \frac{k}{2}\rceil + \frac{B}{C} - \frac{1}{C}\lfloor \frac{k}{2}\rfloor}{k + \frac{B}{C}}}$ \\
        
        $(v)$ & $\mathrm{P}\Omega_{2d}^+(q)$ & $2k$-elliptic & $\displaystyle \frac{q^{2k(d-k)}(q^d - 1)\prod \limits_{i=d-k}^{d-1}(q^{2i} - 1)}{2(q^k + 1)(q^{d-k}+1)\prod \limits_{i = 1}^{k-1}(q^{2i} - 1)}$ &
        $\displaystyle \frac{q^{2\lceil \frac{k+1}{2}\rceil(d-k)}(q^{d - \lfloor \frac{k-1}{2}\rfloor} - 1)\prod \limits_{i=d-k}^{d- \lfloor \frac{k-1}{2}\rfloor -1}(q^{2i} - 1)}{2(q^{\lceil \frac{k+1}{2} \rceil} + 1)(q^{d-k}+1)\prod \limits_{i = 1}^{\lceil \frac{k-1}{2} \rceil}(q^{2i} - 1)}$ & $\displaystyle \lesssim n^{\frac{1}{k} \left\lceil \frac{k+1}{2} \right\rceil}$ \\
        
        $(vi)$ & $\mathrm{P}\Omega_{2d}^-(q)$ &  $(2k+1)$-parabolic & $\displaystyle \frac{q^{(2k+1)(2d -2k -1) -1}(q^d + 1)\prod \limits_{i=d-k}^{d-1}(q^{2i} - 1)}{2\prod \limits_{i = 1}^{k}(q^{2i} - 1)}$ &
        $\displaystyle \frac{q^{(2\lceil \frac{k}{2}\rceil+1)(2d -2k -1) -1}(q^{d - \lfloor\frac{k}{2} \rfloor} + 1)\prod \limits_{i=d-k}^{d- \lfloor\frac{k}{2} \rfloor-1}(q^{2i} - 1)}{2\prod \limits_{i = 1}^{\lceil \frac{k}{2} \rceil}(q^{2i} - 1)}$ & $\displaystyle \lesssim n^{\frac{\lceil \frac{k}{2}\rceil + \frac{B}{C} - \frac{1}{C}\lfloor \frac{k}{2}\rfloor}{k + \frac{B}{C}}}$\\
        
        $(vii)$ & $\mathrm{P}\Omega_{2d}^-(q)$ &  $2k$-elliptic & $\displaystyle \frac{q^{2k(d -k)}(q^d + 1)\prod \limits_{i=d-k}^{d-1}(q^{2i} - 1)}{2(q^k + 1)(q^{d-k}-1)\prod \limits_{i = 1}^{k-1}(q^{2i} - 1)}$ & $\displaystyle \frac{q^{2\lceil\frac{k+1}{2}\rceil (d -k)}(q^{d-\lfloor \frac{k-1}{2} \rfloor} + 1)\prod \limits_{i=d-k}^{d \lfloor \frac{k-1}{2} \rfloor -1}(q^{2i} - 1)}{2(q^{\lceil \frac{k+1}{2} \rceil} + 1)(q^{d-k}-1)\prod \limits_{i = 1}^{\lceil \frac{k-1}{2} \rceil}(q^{2i} - 1)}$ & $\displaystyle \lesssim n^{\frac{1}{k} \left\lceil \frac{k+1}{2} \right\rceil}$ \\
        
        $(viii)$ & $\Omega_{2d+1}(q)$ &   $2k$-hyperbolic & $\displaystyle \frac{q^{k(2d-2k+1)}\prod \limits_{i=d-k+1}^{d}(q^{2i} - 1)}{2 (q^k - 1)\prod \limits_{i = 1}^{k-1}(q^{2i} - 1)}$ & $\displaystyle \frac{q^{\lceil \frac{k}{2}\rceil (2d-2k+1)}\prod \limits_{i=d-k+1}^{d - \lfloor \frac{k}{2}\rfloor}(q^{2i} - 1)}{2 (q^{\lceil \frac{k}{2}\rceil} - 1)\prod \limits_{i = 1}^{\lceil \frac{k}{2} \rceil -1}(q^{2i} - 1)}$ & $\displaystyle \lesssim n^{\frac{1}{k} \left\lceil \frac{k}{2}\right\rceil}$\\
        
        $(ix)$ & $\Omega_{2d+1}(q)$ &   $2k$-elliptic & $\displaystyle \frac{q^{k(2d+1-2k)}\prod \limits_{i=d-k+1}^{d}(q^{2i} - 1)}{2 (q^k + 1)\prod \limits_{i = 1}^{k-1}(q^{2i} - 1)}$ & $\displaystyle \frac{q^{\lceil \frac{k+1}{2}\rceil (2d-2k+1)}\prod \limits_{i=d-k+1}^{d-\lfloor \frac{k-1}{2}\rfloor}(q^{2i} - 1)}{2 (q^{\lceil \frac{k+1}{2}\rceil} + 1)\prod \limits_{i = 1}^{\lceil \frac{k-1}{2}\rceil}(q^{2i} - 1)}$ & $\displaystyle \lesssim n^{\frac{1}{k} \left\lceil \frac{k+1}{2} \right\rceil}$ \\
        \bottomrule
	\end{tabularx}%
	{\medskip \caption{Computation of degree, $|A|$, and asymptotics for $|A|$ for classical groups acting on nondegenerate $h$-subspaces.}%
    \label{table:ND}}%
\end{sidewaystable}%
}
\end{proof}

We end \cref{sec:AS} on a sour note. An attentive reader might have noticed that we have not discussed case~$(l)$ in \cref{table}, which is the last step to complete the proof of \cref{thm:AS}. In this case, the socle of $G$ is isomorphic to $\Omega^\epsilon_{2d}(2^f)$, and the domain consists of nonsingular $1$-subspaces of the natural module of $G$. As stated in \cref{rem:AS}, our current method does not give us any control on $1$-dimensional subspaces, and we have not developed an \emph{ad hoc} approach for this scenario. Hence, the bound that appears in \cref{thm:AS} is actually the general bound from \cref{thm:upperBound}.

\subsection{Simple diagonal type} \label{sec:SD}
Let us first recall the definition of primitive groups of \emph{simple diagonal type}. Let $T$ be a nonabelian simple group, and let $k \geq 3$ be an integer. Note that we can naturally define three actions on $T^k$ -- the right-regular permutation representation of $T^k$, the component-wise action of $\Aut(T)$, and the action of $\Sym(k)$ permuting indices. We can consider the action induces by these actions on the domain $\Omega = T^k / \mathrm{diag}(T^k)$, where
\[\mathrm{diag}(T^k) = \{(t, \ldots, t) \mid t \in T\} \,,\]
Observe that the latter two actions preserve the diagonal of $T^k$, hence they fall in the stabilser of a point. For simplicity, we can identify $\Omega$ with $T^{k-1}$ via the set of representatives
\[\{(t_1, \ldots, t_{k-1}, 1) \mid t_1, \ldots, t_{k-1} \in T\}   \,.\]
Observe that, via this identification, the first $k-1$ components of $T^k$ act on $ \Omega$ regularly by right multiplication, while the last component acts semiregularly by left multiplication. In particular, every inner automorphism can be realized by choosing two suitable elements in $T^{k-1}$ and $T$. Therefore, every subgroup of $\Sym(\Omega)$ that preserves the algebraic structure of the domain as $T^k / \mathrm{diag}(T^k)$ can be written as
\[ T^k \rtimes \left( \Out(T) \times \Sym(k) \right) \]
endowed with the action we just described.
Therefore, a primitive permutation group $G$ is of simple diagonal type if
\[T^k \trianglelefteq G \leq T^k \rtimes \left( \Out(T) \times \Sym(k) \right) \,.\]
Note that, to ensure that $G$ is primitive, we need to impose that the action it induces on the $k$ direct factors of $T^k$ is primitive. Meanwhile $G$ is quasiprimitive, if this action is transitive. 

\begin{proof}[Proof of \cref{thm:SD}]
    In view of \cref{lemma:subgroup}, we can concentrate on $G = T^k \rtimes (\Out(T) \times S_k)$, the largest primitive group of diagonal type with socle $T^k$ acting on $T^{k-1}$. We will build a set $A \subseteq T^{k-1}$ that is not self-separable.

    First, let $B$ be any set of elements of $T$ such that
    \[ |B| = \frac{4}{\sqrt{3}}  |T|^{\frac{1}{2}} ,\]
    $1 \in B$, and for every $t\in T$, $B\cap Bt$ is nonempty. (Recall that its existence is guaranteed by \cref{thm:regular}.) Define \[A_0 = B \times \ldots \times B\] as the direct product of $k-1$ copies of $B$. Note that
    \[ |A_0| = |B|^{k-1} =  \left(\frac{16}{3} |T| \right)^{\frac{k-1}{2}} .\]
    Next, let \[A_1 = T A_0 ,\]
    where the left multiplication by $T$ corresponds to the semiregular action of the last component of the socle.
    Observe that
    \[ |A_1| \leq |A_0||T| = \left(\frac{16}{3}\right)^{\frac{k-1}{2}} |T|^{\frac{k+1}{2}}  .\]
    Finally, we let
    \[A = A_1^{\Out(T)}.\]
    If $T$ is not of Lie type, then $|\Out(T)|\le 4$. Meanwhile, if $T$ is of Lie type, a direct computation shows that $|\Out(T)| \leq 
    \log_2|T|$ (see, for example, \cite{Kohl2003}). Therefore, by setting $\epsilon = 1$ if $T$ is of Lie type, and $\epsilon = 0$ otherwise,
    \begin{align*}
        |A| &\leq |\Out(T)| |A_1| \\
            &\leq 4 \left(\frac{16}{3}\right)^{\frac{k-1}{2}} |T|^{\frac{k+1}{2}} \left(\frac{1}{4}\log_2|T|\right)^\epsilon\\
            & = 4 \left(\frac{16}{3}\right)^{\frac{k-1}{2}} n^{\frac{1}{2} + \frac{1}{k-1}} \left(\frac{1}{4k}\log_2 n\right)^\epsilon. 
    \end{align*}
    
    It remains to prove that the set $A$ satisfies our conditions, that is, for an arbitrary $g \in G$ we have that $A \cap A^g$ is nonempty. For every $g\in G$, there exist $a \in \Sym(k)$, $b\in \Out(t)$, $c\in T$ and $d \in T^{k-1}$ such that $g = abcd$. We claim that, by construction, $A$ is stabilized by $\Sym(k)$, by $\Out(T)$ and by the left semiregular action of $T$. The claim is clear for the latter two actions, while we shall prove it for $\Sym(k)$. Recall that $\Sym(k)$ acts on $T^k$ by permuting components, and hence, for every point $(t_1, \ldots, t_{k-1},1)\mathrm{diag}(T^k)$ and for every $a\in \Sym(k)$,
    \begin{align*}
        \mathrm{diag}(T^k) (t_1, t_2, \dots, t_{k-1},1)  ^a
        &= \mathrm{diag}(T^k) (t_{1a^{-1}}, t_{2a^{-1}}, \dots, t_{(k-1)x^{-1}},t_{ka^{-1}})  \\
        &= \mathrm{diag}(T^k) (t_{ka^{-1}}^{-1}t_{1a^{-1}}, t_{ka^{-1}}^{-1}t_{2a^{-1}}, \dots, t_{ka^{-1}}^{-1}t_{(k-1)a^{-1}},1) .
    \end{align*}
    (For simplicity, we denote the action of $\Sym(k)$ on the indices as right multiplication. Moreover, to avoid a cumbersome notation, we identify $1=t_k$.)
     Observe that every direct factor of $A$ is equal, and hence the previous formula proves the claim, because
    \[A ^{\Sym(k)} \subseteq TA = A .\]
    Hence, $A^{abc}=A$. Finally, we act with $d$ by component-wise right multiplication. By our choice of $B$, it follows that
    \[ |A \cap A^d | \ge |A_0 \cap A_0^d | \ge 1 .\]
    Therefore, $A$ is not self-separating for $G$, which completes the proof of the upper bound for $\m(G)$.

    The asymptotic estimates for $\F_\C(n)$ are obtained by restricting the attention to the groups of simple diagonal type whose socle is isomorphic to a $k$-fold Cartesian product of groups of Lie type. (Actually, groups of Lie type of characteristic $2$ are those for which the bound on $|\Out(T)|$ cannot be improved, see \cite{Kohl2003}, and thus meet the maximum in $\F_\C(n)$.)
\end{proof}

We observe that it would be tempting to retrace the same proof we used for simple diagonal for every family of primitive permutation groups whose socle is regular and nonabelian. Unfortunately, this approach fails because these groups have automorphisms groups which contain an isomorphic image of the socle, and hence the resulting upper bound would exceed the degree.

Moreover, in the proof of \cref{thm:SD}, our starting choice of $A_0$ implies that the size of $A$ depends on $k$ as an exponential function. Meanwhile, if we would have chosen $A_0$ to be any difference basis of the socle of appropriate size, by imposing the stability of $A$ under the action of $\Sym(k)$, the constant would have been factorial in $k$. Perhaps, a similar trick could extend our approach beyond the permutation groups of simple diagonal type -- but there is no reason not to suspect that the trick would be specific to some limited family of socles, rather than an arbitrary semisimple group.

\bibliographystyle{plain}
\bibliography{refs.bib}

\end{document}